\theoremstyle{thmstyleone}%
\theoremstyle{thmstyletwo}%
\theoremstyle{thmstylethree}%
\newcommand{\by}{\bm{y}}
\newcommand{\bx}{\bm{x}}
\newcommand{\bp}{\bm{p}}
\newcommand{\bn}{\bm{n}}
\newcommand{\bk}{\bm{k}}
\newcommand{\bj}{\bm{j}}
\newcommand{\bP}{\bm{P}}
\newcommand{\bK}{\bm{K}}
\newcommand{\bmm}{\bm{m}}
\newcommand{\bell}{\bm{\ell}}
\newcommand{\blam}{\bm{\lambda}}
\newcommand{\bbR}{\mathbb{R}}
\newcommand{\bbC}{\mathbb{C}}
\newcommand{\bbZ}{\mathbb{Z}}
\newcommand{\bbQ}{\mathbb{Q}}
\newcommand{\bbN}{\mathbb{N}}
\newcommand{\bbT}{\mathbb{T}}
\newcommand{\bbP}{\mathbb{P}}
\newcommand{\calC}{\mathcal{C}}
\newcommand{\calS}{\mathcal{S}}
\newcommand{\calV}{\mathcal{V}}
\newcommand{\calP}{\mathcal{P}}
\newcommand{\calI}{\mathcal{I}}
\newcommand{\calT}{\mathcal{T}}
\newcommand{\calX}{\mathcal{X}}
\newcommand{\hv}{\hat{v}}
\newcommand{\hU}{\hat{U}}
\newcommand{\hu}{\hat{u}}
\newcommand{\hf}{\hat{f}}
\newcommand{\tU}{\tilde{U}}
\newcommand{\tu}{\tilde{u}}
\newcommand{\QP}{\mbox{QP}}
\newcommand{\Err}{\mbox{Err}}
\newcommand{\Ord}{\mbox{Ord}}
\newtheorem{thm}{Theorem}[section]
\newtheorem{lemma}[thm]{Lemma}
\newtheorem{defy}[thm]{Definition}
\newcommand\tbbint{{-\mkern -16mu\int}}
\newcommand\dbbint{{-\mkern -19mu\int}}
\newcommand\bbint{
	{\mathchoice{\dbbint}{\tbbint}{\tbbint}{\tbbint}}
}
\begin{document}

\title[Article Title]{High-accuracy numerical methods and convergence analysis for Schr\"{o}dinger equation with incommensurate potentials}

%%=============================================================%%
%% Prefix	-> \pfx{Dr}
%% GivenName	-> \fnm{Joergen W.}
%% Particle	-> \spfx{van der} -> surname prefix
%% FamilyName	-> \sur{Ploeg}
%% Suffix	-> \sfx{IV}
%% NatureName	-> \tanm{Poet Laureate} -> Title after name
%% Degrees	-> \dgr{MSc, PhD}
%% \author*[1,2]{\pfx{Dr} \fnm{Joergen W.} \spfx{van der} \sur{Ploeg} \sfx{IV} \tanm{Poet Laureate} 
%%                 \dgr{MSc, PhD}}\email{iauthor@gmail.com}
%%=============================================================%%

\author[1]{\fnm{Kai} \sur{Jiang}}\email{kaijiang@xtu.edu.cn}

\author[1]{\fnm{Shifeng} \sur{Li}}\email{shifengli@smail.xtu.edu.cn}
%\equalcont{These authors contributed equally to this work.}

\author[1]{\fnm{Juan} \sur{Zhang}}\email{zhangjuan@xtu.edu.cn}
%\equalcont{These authors contributed equally to this work.}

\affil[1]{Department of Mathematics and Computational
	Science, Xiangtan University, Xiangtan, Hunan,
	411105, P. R. China}

%\affil[2]{\orgdiv{Department}, \orgname{Organization}, \orgaddress{\street{Street}, \city{City}, \postcode{10587}, \state{State}, \country{Country}}}
%
%\affil[3]{\orgdiv{Department}, \orgname{Organization}, \orgaddress{\street{Street}, \city{City}, \postcode{610101}, \state{State}, \country{Country}}}

%%==================================%%
%% sample for unstructured abstract %%
%%==================================%%

\abstract{Numerical solving the Schr\"{o}dinger equation with incommensurate potentials presents a great challenge since its solutions could be space-filling quasiperiodic structures without translational symmetry nor decay. In this paper, we propose two high-accuracy numerical methods to solve the time-dependent quasiperiodic Schr\"{o}dinger equation. Concretely, we discretize the spatial variables by the quasiperiodic spectral method and the projection method, and the time variable by the second-order operator splitting method. The corresponding convergence analysis is also presented and shows that the proposed methods both have exponential convergence rate in space and second order accuracy in time, respectively. Meanwhile, we analyse the computational complexity of these numerical algorithms. One- and two-dimensional numerical results verify these convergence conclusions, and demonstrate that the projection method is more efficient.
}

\keywords{Quasiperiodic Schr\"{o}dinger equation,
	Quasiperiodic spectral method,
	Projection method,
	Second-order operator splitting method,
	Convergence analysis.}

%%\pacs[JEL Classification]{D8, H51}

%%\pacs[MSC Classification]{35A01, 65L10, 65L12, 65L20, 65L70}

\maketitle

\section{Introduction}
\label{sec:Intro}

%\subsection{Background}

%An important task in molecular dynamics and other related  fields is to solve the time-dependent Schr\"{o}dinger equation (TSE). 
In this paper, we consider the time-dependent quasiperiodic Schr\"{o}dinger equation (TQSE)
\begin{align}
i\frac{\partial u(\bx,t)}{\partial t}
=-\Delta u(\bx,t)+V(\bx)u(\bx,t)+f(\bx,t),~~(\bx,t)\in\bbR^d\times[0,T],
\label{eq:QSE}
\end{align}
where $\Delta=\partial^{2}_{x_1}+\cdots +\partial^{2}_{x_d}$ is the Laplacian operator, $V(\bx)$ is the incommensurate potential, $f(\bx,t)$ is an external field function and $u(\bx,t)$ is quasiperiodic in $\bx$. The initial value $u(\bx, 0)=u_0(\bx)$.
%From mathematical point of view, QSE is a partial differential equation, containing one (or more) spatial variables and a temporal variable. 

%{\bf The background of Schr\"{o}dinger equation :} The Schr\"{o}dinger equation represents the  interaction  of a system of quantum mechanical particles with classical monochromatic radiation.  

%{\bf The background of quasiperiodic Schr\"{o}dinger equation:}
%In comparison to the periodic case, which is well understood and described in
%the Floquet theory (see for example [?]), the quasi-periodic case has shown to be
%elicate to analyze and the existing results reveal a very rich flora of various different phenomena. The main difficulty in the analysis is the presence of resonances, which appear in the form of small divisors.

%The research on Schr\"{o}dinger equation in periodic system has been relatively mature both in application and in mathematical theory, but it is still a challenge for this kind of equation in quasiperiodic system. The research of quasiperiodic Schr\"{o}dinger system has gained a lot of new understanding and scientific value both in physical science and in materials.
% that holds the same central importance as Newton's laws of motion in classical mechanics.
%In these systems, quasiperiodic phenomena may occur both in time and space.

The Schr\"{o}dinger equation is a fundamental equation in quantum mechanics. 
The study of the Schr\"{o}dinger equation with periodic potentials has reached a relatively mature stage \cite{Dirac1958principles,Jahnke2000error}. 
In the last few decades, the (nonlinear) quasiperiodic Schr\"{o}dinger equations (QSEs) and quasiperiodic Schr\"{o}dinger operators, have been attracted much attention due to the fascinating phenomena such as Anderson localization, mobility edge, topological phase transition \cite{Jitomirskaya1999Metal-insulator, Nixon2010Electronic,Zhou2014Particle-hole, wang2020localization, Kohmoto1983Metal-Insulator, Lahini2009observation, Cao2018unconventional}. 
Many mathematical works have been presented to study these incommensurate systems. 
Kuksin and P\"{o}schel's earlier work established the existence of quasiperiodic solutions in time direction for one-dimensional nonlinear QSEs by extending the Kolmogorov-Arnold-Moser (KAM) method to infinite dimensions \cite{Kuksin1996Invariant}.
%{\color{red}Therefore, the small divisors problem in the quasiperiodic Schr\"{o}dinger system can be handled via KAM theory \cite{Liang2005Quasi-periodic}.} 
Bourgain developed the modified Craig-Wayne method to prove the existence of quasiperiodic solutions for nonlinear QSE with periodic or spatial Dirichlet boundary conditions \cite{Bourgain1994Construction,Bourgain1998Quasi-periodic}.
A recent work by Berti and Bolle investigated the existence of time-quasiperiodic solutions of $\bbT^d$-QSE  in $C^{\infty}$-space by the Nash-Moser iteration \cite{Berti2012Sobolev, Berti2013Quasi-periodic}. Besides, Cong \textit{et al.} studied Anderson localization of the nonlinear time-quasiperiodic Schr\"{o}dinger equation by using the Birkhoff normal form transform \cite{Cong2023Longtime}. While the time-quasiperiodic Schr\"{o}dinger equation has received considerable attention, there are few theoretical results available for the spatial quasiperiodic Schr\"{o}dinger equation. Recently, Wang has constructed spatial quasiperiodic solutions to the nonlinear Schr\"{o}dinger equation (NLS) on $\bbR^d$ \cite{Wang2020Space}. Furthermore, the time-space quasiperiodic solutions for the non-integrable NLS on $\bbR$ have been analyzed in \cite{Wang2022Infinite}. 
For the spectral theory of quasiperiodic Schr\"{o}dinger operators, one can refer to \cite{Avila2009spectrum, Marx2017Dynamics}.

%Meanwhile, in the field of spectral theory for quasiperiodic Schr\"{o}dinger operators, Avila has proved that the spectrum of such operators is a Cantor set \cite{Avila2009spectrum}. Moreover, he collaborated with Jitomirskaya to solve Ten Martini problem \cite{Avila2009ten}.
%Recently, there has been a remarkable development of arithmetically spectral transition (singular spectrum and Anderson localization) for quasiperiodic Schr\"{o}dinger operator \cite{Morozov2014Complete, Marx2017Dynamics}.

However, numerically solving the QSE is still a great challenge since their solutions could be quasiperiodic, globally ordered structure without translational symmetry nor decay. 
The periodic approximation method (PAM) has been used to obtain an approximate periodic solution within a finite domain \cite{wang2020localization}.
Besides, the PAM is also used to solve and analyze the eigenvalues of Schr\"{o}dinger operators \cite{Jitomirskaya2012Analytic, Damanik2014isospectral}.
However, the PAM is often unsatisfactory in terms of algorithm accuracy and convergence rate, see \cite{Jiang2022approximation} for details. %The linear convergence rate of PAM is usually obtained with great computational cost.
Therefore, there is still a lack of highly precise and efficient numerical algorithms for solving TQSE to obtain the quasiperiodic solution, especially for the case of arbitrary dimensions. Recently, two effective numerical methods, quasiperiodic spectral method (QSM) and projection method (PM), have been proposed to solve quasiperiodic systems \cite{jiang2014numerical, jiang2018numerical}. 
The corresponding function approximation rate analysis of QSM and PM has been given in \cite{Jiang2022Numerical}, respectively.
The PM has been also applied to quasiperiodic Schr\"{o}dinger eigenvalue systems\,\cite{Gao2023Pythagoras}.
Extensive studies have demonstrated that the PM can achieve high accuracy in computing various quasiperiodic systems \cite{jiang2015stability, cao2021computing, li2021numerical, jiang2022tilt,wang2022Effective}.
However, the corresponding numerical analysis is lacking.
These theoretical results and applications illuminate our problems. 
The purpose of this paper is to study how to efficiently solve the spatially quasiperiodic solutions of higher-dimensional quasiperiodic Schr\"{o}dinger equation to high accuracy, and to establish the corresponding convergence analysis.
Concretely, we apply QSM and PM to solve the quasiperiodic solution of TQSE \eqref{eq:QSE} and analyze their computational complexity. Meanwhile, a rigorous error analysis shows that both algorithms have exponential convergence rates. One- and two-dimensional numerical examples are given to verify the effectiveness of the proposed algorithms. Furthermore, we can obtain quasiperiodic solutions and show that the PM is an efficient and high-precision algorithm for solving the TQSE.

%\subsection{Notation and Outline}
%{\bf Auxiliary notations:} 
% {\color{blue}
	%	As standard, for any integer number $1\leq p\leq \infty $, we denote by $L^p( \Omega)$ the Lebesgue space of complex-valued functions on $\Omega$
	%	and $W^{\alpha,p}(\Omega)$ all functions with partial derivatives up to order $m\leq 1$ contained in $L^p( \Omega)$.}

The rest of the paper is organized as follows. Section \ref{sec:Numericalmethod} proposes two methods, the PM and the QSM, coupled with the second-order operator splitting scheme, to solve TQSE \eqref{eq:QSE}. We also give their numerical implementation and computational complexity analysis. Section \ref{sec:converanaly} introduces quasiperiodic Hilbert spaces and gives the convergence analysis of the proposed methods. Section \ref{sec:Num} presents numerical results to further validate the theoretical analysis. Finally, the conclusion of this paper is given in Section \ref{sec:diss}.

\section{Numerical methods} 
\label{sec:Numericalmethod}

Throughout, we make use of the following notations. Let $\Omega_L=[-L,L]^d \subset \bbR^d$ and $\vert\Omega_L\vert=(2L)^d$. 
%$\bbN_0=\bbN \setminus \{0\}$ represents the set of nonnegative integer numbers. 
For any vector $\bx\in\bbR^d$, we define 
$\Vert \bx\Vert^2=\sum_{j=1}^{d}\vert x_j\vert^2 $ and 
$\vert \bx\vert=\sum_{j=1}^{d}\vert x_j\vert $.
For any multi-index $\mu=(\mu_1,\cdots,\mu_d)\in\bbN^d$ and $\bx\in \bbR^d$, let
$\partial^\mu_{\bx}=\partial^{\mu_1}_{x_1}\cdots \partial^{\mu_d}_{x_d}$. 
We present the definition of the quasiperiodic function.
\begin{defy}
A matrix $\bP\in\bbR^{d\times n}$ is the projection matrix, if it belongs to the set  
$\bbP^{d\times n} :=\{\bP=(\bp_1,\cdots,\bp_n)\in\bbR^{d\times n}: \bp_1,\cdots,\bp_n  ~\mbox{are~} \bbQ\mbox{-linearly independent}\}.$
	%and $\bP=(\bp_1,~\bp_2,\cdots,\bp_n)\in \bbP^{d\times n}$ be the projection matrix.
\end{defy}
\begin{defy}
	\label{def:quasiperiodic}
	A $d$-dimensional function $u(\bx)$ is quasiperiodic if there exists an $n$-dimensional periodic function $u_p$ and a projection matrix $\bP\in \bbP^{d\times n}$, such that $u(\bx)=u_p(\bP^T \bx)$ for all $\bx\in\bbR^d$.
\end{defy}

In particular, when $n=d$ and $\bp_1,~\bp_2,\cdots,\bp_d$ form a basis of $\bbR^d$,
$u(\bx)$ is periodic. For convenience, we refer to $u_p$ in Definition \ref{def:quasiperiodic} as the parent function of $u$. 
The continuous Fourier-Bohr transformation of $u(\bx)$ is
\begin{align}
	\hu_{\blam}= \lim_{L\rightarrow\infty} \frac{1}{\vert \Omega_L\vert} 
	\int_{\Omega_L} u(\bx)e^{-i\blam\cdot \bx}\,d\bx:=\bbint u(\bx)e^{-i\blam\cdot \bx}\,d\bx,
	\label{eq:transform-FC}
\end{align}
where $\blam\in\bbR^d$.
The Fourier series associated to $u(\bx)$ is
\begin{align}
	u(\bx)\sim\sum_{j=1}^{\infty} \hu_{\blam_j} e^{i\blam_j\cdot \bx},\label{eq:Fouriercoeff}
\end{align}
where $\blam_j\in \sigma(u)=\{\blam: \blam = \bP\bk,~\bk\in \bbZ^n \}$ are Fourier exponents, $\hu_{\blam_j}$ computed by \eqref{eq:transform-FC} are Fourier coefficients. 
If the Fourier series \eqref{eq:Fouriercoeff} is absolutely
convergent, it is also uniformly convergent. Therefore, when $\sum_{j=1}^{\infty} \vert \hu_{\blam_j}\vert <+\infty$, we have 
\begin{align*}
	u(\bx)=\sum_{j=1}^{\infty} \hu_{\blam_j} e^{i\blam_j\cdot \bx}.
\end{align*}
Moreover, the Parseval equality
\begin{align}
	\sum_{j=1}^{\infty}\vert \hu_{\blam_j}\vert^2
	=\bbint \vert u(\bx)\vert^2\,d\bx
	\label{eq:parseval}
\end{align}
is true.

% For a given quasiperiodic potential function
% \begin{align}
	% 	V(\bx)=\sum_{\blam\in \sigma(V)}\hat{V}_{\blam}e^{i\blam\cdot \bx},
	% 	\label{eq:defV}
	% \end{align}
% assume that the wave function $u$ in TQSE \eqref{eq:QSE} has the following Fourier series expansion
% \begin{align*}
	% 	u(\bx,t)=\sum_{\blam\in\sigma(u)} \hu_{\blam}(t)e^{i\blam\cdot \bx}.
	% \end{align*}

%The proposed methods are using PM or QSM for the space and second order Strang splitting scheme for the time, detailed information on PM (or QSM) and splitting methods is found, for instance, in \cite{Jiang2022Numerical,Jahnke2000error}.
For simplicity, we consider the homogeneous TQSE, \textit{i.e.,} $f=0$, in the following analysis.
For inhomogeneous $f\neq 0$, the presented analysis below can be easily extended.
Denote the operator $A=-\Delta$ and for a given quasiperiodic potential 
\begin{align}
	V(\bx)=\sum_{\blam\in \sigma(V)}\hat{V}_{\blam}e^{i\blam\cdot \bx},
	\label{eq:defV}
\end{align}
TQSE \eqref{eq:QSE} becomes
\begin{align}
	\frac{\partial u}{\partial t}=-iA u-iVu.
	\label{eq:QSE-re}
\end{align}
The formal solution of \eqref{eq:QSE-re} is
\begin{align*}
	u(\bx,t)=e^{-it(A+V)}u_0=\calT u_0,~~(\bx,t)\in \bbR^d\times [0,T].
\end{align*}
Next, we employ QSM and PM to discretize TQSE \eqref{eq:QSE-re} in space direction, and the second-order operator splitting (OS2) method in time direction.

%\subsection{QSM and PM}
\subsection{Spatial discretization}

%The rigorous mathematical framework and theoretical error analysis of QSM and PM are gave in  \cite {Jiang2022Numerical}, respectively. 

%Moreover, many applications have shown that the PM is highly accurate in computing quasiperiodic systems\,\cite{jiang2015stability,
	%	zhou2019plane, cao2021computing, li2021numerical, jiang2022tilt}. 

%In \cite{Jiang2022Numerical}, the connection between quasiperiodic function and higher dimensional periodic function is established using Birkhoff's ergodic theorem.
We first introduce the QSM and the PM, respectively.
For a positive integer $N\in\bbN_0=\bbN \setminus \{0\}$ and the given projection matrix $\bP\in\bbP^{d\times n}$, denote
\begin{align*}
	\bK_N^n=\{\bk=(k_j)_{j=1}^n \in\bbZ^n: \, -N \leq  k_j < N \},
\end{align*}
and $\sigma_{N}=\{\blam = \bP\bk: \bk\in \bK_N^n \}.$
The order of set $\sigma_{N}$ is $\# (\sigma_{N})=(2N)^n:=D$.

\subsubsection{QSM method}

The QSM approximates the quasiperiodic function $u(\bx)$ by the truncation operator $\calP_N$, \textit{i.e.,} 
\begin{align*}
	u(\bx)\approx	
	\calP_N u(\bx) =\sum_{\blam_j \in \sigma_N }
	\hu_{\blam_j} e^{i \blam_j\cdot \bx},~~\bx \in \bbR^d,
\end{align*}
where $\hu_{\blam_j}$ is obtained by the continuous Fourier-Bohr transformation of $u(\bx)$.
The function approximation theory of QSM can be found in \cite{Jiang2022Numerical}.

%For QSE \eqref{eq:QSE}, we consider using PM in the spatial direction.
%Let's first review the discrete space and interpolation functions of PM.

%For periodic systems, we can use the spectral collocation method to obtain Fourier coefficients.

\subsubsection{PM method}
An alternative way is PM, seizing the fact that the quasiperiodic system is defined on the irrational manifold of higher-dimensional torus. Concretely, the PM efficiently computes the Fourier coefficient of the periodic parent function on a higher-dimensional torus in a pseudo-spectral manner. Then, the quasiperiodic structure can be obtained by projecting the high-dimensional periodic structure onto a corresponding irrational manifold.
Let $\bbT^n=(\bbR/2\pi \bbZ)^n$ be the $n$-dimensional torus. To discretize $\bbT^n$, we consider a fundamental domain $[0,2\pi)^n$ and assume that the discrete node along each dimension is the same, \textit{i.e.,} $N\in\bbN_0$. Then $[0,2\pi)^n$ is discretized by 
grid points $\by_{\bj} =(y_{1,j_1},
y_{2,j_2},\dots, y_{n,j_n})$, where $y_{1,j_1}=j_1 h$, $y_{2,j_2}=j_2 h, \dots,
y_{n,j_n}=j_n h$, $0\leq j_1,j_2,\dots, j_n < 2N$, with the spatial discretization size $h=\pi/N$. 
The discrete $n$-dimensional torus $\bbT^n_N$ can be obtained by periodic extending these grid points $\by_{\bj}$. The grid periodic function space defined on $\bbT^n_N$ is
\begin{align*}
	\mathcal{G}_N := \{U: \bbZ^n \mapsto \bbC: ~ U~ \mbox{is}~ \bbT^n_N \mbox{-periodic} \}.
\end{align*}
Given any periodic grid functions $U_1, U_2\in\mathcal{G}_N$, the $\ell^2$-inner product is
\begin{align}
	\langle U_1, U_2 \rangle_N = \frac{1}{(4\pi N)^n}\sum_{\by_{\bj}\in\bbT^n_N}
	U_1(\by_{\bj})\overline{U}_2(\by_{\bj}).
\end{align}
For $\bk, \bell\in\bbZ^n$, we have the discretize orthogonality
\begin{align}
	\langle e^{i\bk\cdot \by_{\bj} }, e^{i\bell\cdot \by_{\bj} } \rangle_N =
	\begin{cases}
		1,~~\bk=\bell + 2N\bmm,~\bmm \in \bbZ^n,\\
		0,~~\mbox{otherwise}.
	\end{cases}
	\label{eq:disOrth}
\end{align}
%\begin{align*}
%	K_N^n=\big\{ \by=(y_{j})_{j=1}^n\in\bbT^n:~ y_{j}\in j\pi h/N,~ %h=0,1,\ldots,2N-1  \big\}
%\end{align*}
%referred to as nodes and $\mbox{dim}K_N^n=(2N)^n$.
The discrete Fourier coefficient of $U\in \mathcal{G}_N$ is
\begin{align}
	\tU_{\bk} = \langle U, e^{i \bk\cdot \by_{\bj}} \rangle_N,~~~
	\bk \in \bK_N^n.
	\label{eq:disFourierCoeff}
\end{align}
The PM designates $\tU_{\bk}$ as the Fourier-Bohr coefficient $\tu_{\blam}$, $\blam =\bP\bk$. Then we obtain the discrete Fourier-Bohr expansion of $u(\bx)$ is
\begin{align*}
	u(\bx_{\bj})=\sum_{\blam\in \sigma_N} \tilde{u}_{\blam}e^{i\blam\cdot \bx_{\bj}},
\end{align*}
where collocation points $\bx_{\bj}\in \bbQ_N=\{\bx_{\bj}=\bP\by_{\bj}$, $\by_{\bj}\in \bbT^n_N\}$.
The trigonometric interpolation of $u$ is
\begin{align}
	I_N u(\bx)=\sum_{\blam\in \sigma_N} \tu_{\blam}e^{i\blam\cdot\bx}.
	\label{eq:tri-quasi}
\end{align}
Consequently, let $I_N u(\bx_j) \approx u(\bx_j)$.
Recent function approximation theory has shown that the PM has exponential convergence \cite{Jiang2022Numerical}. 
From the implementation, it is apparent that the PM can use the $n$-dimensional fast Fourier transform (FFT) to obtain Fourier coefficients but the QSM could not. 

\subsection{Second-order operator splitting (OS2) method}

The OS2 scheme is one of the most popular numerical methods of solving Schr\"{o}dinger equations \cite{Strang1968construction}. The basic idea of OS2 method is splitting
TQSE \eqref{eq:QSE-re} into two subproblems
\begin{align}
	\frac{\partial u}{\partial t}=-iA u,~~
	\frac{\partial u}{\partial t}=-iVu.
	\label{eq:os2subproblems}
\end{align}
For the time grid points $0 = t_0 < t_1 <\cdots < t_M = T$, where $t_m=m\tau$, $m=0,1,\dots, M$, and the time step size $\tau=T/M$, the OS2 method numerically approximates the solution by the recurrence relation
\begin{align}
	u^{m}=e^{-\frac{i}{2}\tau A} e^{-i\tau V} e^{-\frac{i}{2}\tau A}u^{m-1}
	=\calS^m u^0\approx u(\cdot, t_m),~~
	1\leq m\leq M,
	\label{eq:splitting}
\end{align}
where $\calS=e^{-\frac{i}{2}\tau A} e^{-i\tau V} e^{-\frac{i}{2}\tau A}$ and $u^0=u(\cdot,0)$.

%{\color{red}The numerical realization steps are as follows:}

\subsection{Numerical implementation}

In numerical implementation, the semi-discrete subproblems of \eqref{eq:os2subproblems} can be solved by QSM and PM, in terms of QSM-OS2 and PM-OS2, respectively. The detailed implementation to solve TQSE \eqref{eq:QSE-re} and computational complexity analysis are shown below.

\subsubsection{QSM-OS2 method}

From $t_m$ to $t_{m+1}$, the QSM-OS2 involves three steps.

$\bullet$ \textbf{QSM-OS2-Step 1.} For $t\in [t_m, t_m+\tau/2]$, consider the ordinary differential equation
\begin{align}
	\frac{\partial u}{\partial t}=-iA u,
	\label{eq:subode1}
\end{align}
with initial value $u(\cdot, t_m)$. Therefore, we have
\begin{align*}
u(\bx, t_m+\frac{\tau}{2}) &\approx	\phi(\bx, t_m)
	=e^{-\frac{i}{2}\tau \check{A}_N} u(\bx, t_m)
	\\
	&=e^{-\frac{i}{2}\tau A}\calP_N u(\bx, t_m)
	=\sum_{\blam\in\sigma_N(u)} \hu_{\blam}(t_m)e^{-\frac{i}{2}\tau \Vert \blam\Vert^2}e^{i\blam\cdot \bx}.
\end{align*}
Denote $\hat{\phi}_{\blam}(t_m)= \hu_{\blam}(t_m)e^{-\frac{i}{2}\tau \Vert \blam\Vert^2}$.

$\bullet$ \textbf{QSM-OS2-Step 2.} For $t\in [t_m, t_{m+1}]$, consider the ordinary differential equation
\begin{align*}
	\frac{\partial u}{\partial t}=-iV u,
\end{align*}
with initial value $\phi(\bx, t_m)$. From the definition of $V$ in \eqref{eq:defV}, we have
\begin{align*}
	\psi(\bx, t_m)=\phi(\bx, t_{m+1})
	=e^{-i\tau \check{V}_N} \phi(\bx, t_m)
	=e^{-i\tau V}\calP_N \phi(\bx, t_m),
\end{align*}
where Fourier coefficient vector $\hat{\Psi}(t_m)=(\hat{\psi}_{\blam_1}(t_m),\hat{\psi}_{\blam_2}(t_m),\cdots, \hat{\psi}_{\blam_D}(t_m))$ satisfies
\begin{align}
	\hat{\Psi}(t_m)=e^{-i\tau \calV}\hat{\Phi}(t_m)
	\label{eq:QSM-OS2step2}
\end{align}
with
\begin{align*}
\calV=\begin{pmatrix}
\hat{V}_{\blam_1-\blam_1} & \hat{V}_{\blam_1-\blam_2} & \cdots &\hat{V}_{\blam_1-\blam_D}\\
\hat{V}_{\blam_2-\blam_1} & \hat{V}_{\blam_2-\blam_2} & \cdots &\hat{V}_{\blam_2-\blam_D}\\
\vdots & \vdots & &\vdots\\
\hat{V}_{\blam_D-\blam_1} & \hat{V}_{\blam_D-\blam_2} & \cdots &\hat{V}_{\blam_D-\blam_D}
\end{pmatrix},
\end{align*}
and $\hat{\Phi}(t_m)=(\hat{\phi}_{\blam_1}(t_m),\hat{\phi}_{\blam_2}(t_m),\cdots, \hat{\phi}_{\blam_D}(t_m))$.
Note that $\hat{V}_{\blam_j-\blam_{\ell}}$ means the Fourier coefficient of $V$ on the Fourier exponent $\blam=\blam_j-\blam_{\ell}$.
%Denote $\hat{\psi}_{\blam}(t_m)=\hat{\phi}_{\blam}(t_m)
%e^{-i\tau (\sum_{\bm{\beta}\in\bLam_N(u)} \hat{V}_{\bm{\beta}-\blam})}$.

$\bullet$ \textbf{QSM-OS2-Step 3.} For $t\in [t_m, t_m+\tau/2]$, we still consider \eqref{eq:subode1} but with initial value $\psi(\bx, t_m)$, and have
\begin{align*}
u(\bx,t_{m+1})\approx e^{-\frac{i}{2}\tau \check{A}_N} \psi(\bx, t_m).
\end{align*}

Consequently, the fully discrete scheme of QSM-OS2 can be written as
\begin{align}
	u_N^{m+1}=e^{-\frac{i}{2}\tau \check{A}_N} e^{-i\tau \check{V}_N} e^{-\frac{i}{2}\tau \check{A}_N}u_N^{m}
	=\check{\calS}^{m+1}_Nu^0\approx u(\cdot, t_{m+1}),~~
	0\leq m\leq M-1,
	\label{eq:splitting-QSM}
\end{align}
where
\begin{align*}
	\check{\calS}_N=e^{-\frac{i}{2}\tau \check{A}_N} e^{-i\tau \check{V}_N} e^{-\frac{i}{2}\tau \check{A}_N}
	=e^{-\frac{i}{2}\tau A}\calP_N e^{-i\tau V}\calP_N e^{-\frac{i}{2}\tau A} \calP_N.
	%	\label{eq:fulldiscreteQSM}
\end{align*}
Note that the idenitity $\calP_Ne^{-\frac{i}{2}\tau A}\calP_Nu=e^{-\frac{i}{2}\tau A}\calP_Nu$ holds.

\subsubsection{PM-OS2 method}
From $t_m$ to $t_{m+1}$, the PM-OS2 contains three steps.

$\bullet$ \textbf{PM-OS2-Step 1.} For $t\in [t_m, t_m+\tau/2]$, similar to QSM-OS2-Step 1, we have
\begin{align*}
	u(\bx,t_m+\tau/2)
	\approx \phi(\bx,t_m)=
	e^{-\frac{i}{2}\tau A_N} u(\bx, t_m)
	=\sum_{\blam\in\sigma_N(u)} \tu_{\blam}(t_m)e^{-\frac{i}{2}\tau \Vert \blam\Vert^2}e^{i\blam\cdot \bx}.
\end{align*}
Then, denote $\tilde{\phi}_{\bk}(t_m)=\tu_{\blam}(t_m)e^{-\frac{i}{2}\tau \Vert \blam\Vert^2}$ with $\blam=\bP\bk$.

$\bullet$ \textbf{PM-OS2-Step 2.} Applying FFT yields
\begin{align*}
	I_N\phi_p(\by_j,t_m)=\sum_{\bk\in K^n_N} \tilde{\phi}_{\bk}(t_m)e^{i\bk\cdot\by_j},
\end{align*}
where the grid points $\by_j\in \bbT^n_N$.
For $t\in [t_m, t_{m+1}]$, we have
\begin{align*}
	\psi_p(\by,t_{m})=e^{-i\tau V_p} I_N \phi_p(\by, t_m),
\end{align*}
where $V_p$ is the parent function of $V$.
Using FFT again, we have $\tilde{\psi}_{\blam}(t_m)=\langle \psi_p(\by_j,t_{m}), e^{i \bk\cdot \by_j} \rangle_N$ with $\blam=\bP\bk$.
Consequently, we can obtain 
\begin{align*}
	\psi(\bx,t_{m})
	=e^{-i\tau V_N}\phi(\bx, t_m)
	=e^{-i\tau V} I_N \phi(\bx, t_m).
\end{align*}

$\bullet$ \textbf{PM-OS2-Step 3.} For $t\in [t_m, t_m+\tau/2]$, similar to PM-OS2-Step 1, we have
\begin{align*}
	u(\bx,t_{m+1})\approx
	u_N^{m+1}=e^{-\frac{i}{2}\tau A_N} \psi(\bx, t_m)
	=\sum_{\blam\in\sigma_N(u)} \tilde{\psi}_{\blam}(t_m)
	e^{-\frac{i}{2}\tau \Vert\blam \Vert^2}
	e^{i\blam\cdot \bx}.
\end{align*}

%{\bf Numerical realization:}

Therefore, we can  write the fully discrete scheme of PM-OS2 as
\begin{align}
	u_N^{m+1}=e^{-\frac{i}{2}\tau A_N} e^{-i\tau V_N} e^{-\frac{i}{2}\tau A_N}u_N^m
	=\calS^{m+1}_Nu^0\approx u(\cdot, t_{m+1}),~~
	0\leq m\leq M-1,
	\label{eq:splitting-PM}
\end{align}
where
\begin{align*}
	\calS_N=e^{-\frac{i}{2}\tau A_N} e^{-i\tau V_N} e^{-\frac{i}{2}\tau A_N}
	=e^{-\frac{i}{2}\tau A}I_N e^{-i\tau V}I_N e^{-\frac{i}{2}\tau A} I_N.
	%		\label{eq:fulldiscrete}
\end{align*}

\subsubsection{Computational complexity analysis}

Here we analyze the computational complexity of each time step for QSM-OS2 and PM-OS2, respectively. In the implementation of QSM-OS2-Step 1 and 3, the QSM requires $D$ multiplication operators to solve \eqref{eq:subode1}. In QSM-OS2-Step 2, we compute the finite terms of the Taylor expansion for $e^{-i\tau \calV}$, \textit{i.e.,} 
\begin{align}
	e^{-i\tau \calV}\approx\sum_{j=0}^{k}\frac{(-i\tau \calV)^j}{j!}.
	\label{eq:taylorexp}
\end{align}
Therefore, there require $2(k-1)D^3+D^2$ operators to compute \eqref{eq:taylorexp} and $2D^2-D$ operators to compute \eqref{eq:QSM-OS2step2}. The computational complexity of QSM-OS2 in solving TQSE \eqref{eq:QSE-re} is $O(D^3)$.
%{\color{red}$m(2D^2+D)+2(k-1)D^3+D^2$}. 

In the implementation of PM-OS2, there require $D$ multiplication operators in PM-OS2-Step 1 and 3.
However, in PM-OS2-Step 2, the availability of FFT allows us to compute the convolutions economically in physical space as dot product, only requiring $O(D \log D)$ operators. Therefore, the computational complexity of PM-OS2 in solving TQSE \eqref{eq:QSE-re} is $O(D \log D)$.

\section{Theoretical analysis}
\label{sec:converanaly}
%Taking PM-OS2 as an example, we give its convergence analysis.
%The QSM is an extension of Fourier spectral method.
In this section, we present the convergence analysis of QSM-OS2 and PM-OS2. 

\subsection{Preliminaries}
\label{sec:Pre}
In this subsection, we will introduce some Hilbert spaces on $\bbT^n$ and quasiperiodic Hilbert spaces on $\bbR^n$. 

\begin{itemize}
\item {\bf $L^2(\bbT^n)$ space: }
    $
	L^2(\bbT^n)=\Big\{U(\by): \frac{1}{|\bbT^n|}\int_{\bbT^n}\vert U\vert^2\,d\by < +\infty\Big\},
$
equipped with inner product
\begin{align*}
	(U_1, U_2)_{L^2(\bbT^n)}=\frac{1}{|\bbT^n|}\int_{\bbT^n}U_1\overline{U}_2\,d\by.
\end{align*}

\item {\bf $H^\alpha(\bbT^n)$ space:} for any integer $\alpha\geq 0$, the $\alpha$-derivative Hilbert space on $\bbT^n$ is
\begin{align*} 
	H^\alpha(\bbT^n)=\{U\in L^2(\bbT^n): \Vert U\Vert_{\alpha}< +\infty \},
\end{align*}
where  
$
\Vert U \Vert_\alpha=\Big(\sum_{\bk\in\bbZ^n}(1+\vert \bk \vert^{2})^{\alpha}
\vert \hU_{\bk}\vert^2 \Big)^{1/2},~\hU_{\bk}=(U,e^{i\bk\cdot\by})_{L^2(\bbT^n)}.
$
The semi-norm of $H^\alpha(\bbT^n)$ can be defined as
$
\vert U \vert_\alpha=\Big(\sum_{\bk\in\bbZ^n}\vert \bk \vert^{2\alpha} \vert \hU_{\bk}\vert^2 \Big)^{1/2}.
$

\item {\bf $X_{\alpha}$ space:} for any $U\in L^2(\bbT^n)$ and $\alpha\in\bbR$, the Fourier series expansion is
\begin{align*}
	U(\by)=\sum_{\bk\in\bbZ^n} \hU_{\bk}e^{i\bk\cdot \by},
\end{align*}
the linear operator $(-\Delta)^\alpha$ is given by
\begin{align*}
	(-\Delta)^\alpha U
	=\sum_{\bk\in\bbZ^n} \Vert \bk \Vert^{2\alpha} \hU_{\bk}  e^{i\bk\cdot \by},
\end{align*}
and
\begin{align*}
	X_{\alpha}=
	\Big \{U(\by)=\sum_{\bk\in\bbZ^n} \hU_{\bk}e^{i\bk\cdot \by}\in L^2(\bbT^n):
	\Vert  (-\Delta)^\alpha U \Vert^2
	=\sum_{\bk\in\bbZ^n} \vert \hU_{\bk}\vert^2\cdot
	\Vert \bk \Vert^{4\alpha} <\infty \Big \}.
\end{align*}
The set $X_{\alpha}$ forms a Hilbert space with inner product 
\begin{align*}
	(U,W)_{X_{\alpha}}=(U,W)_{L^2(\bbT^n)}
	+((-\Delta)^\alpha U,(-\Delta)^\alpha W)_{L^2(\bbT^n)},
\end{align*}
and
\begin{align*}
	\Vert U\Vert^2_{X_{\alpha}}=\sum_{\bk\in\bbZ^n} 
	(1+\Vert \bk \Vert^{4\alpha}) \vert \hU_{\bk} \vert^2.
\end{align*}
When $\alpha=0$, $\Vert U\Vert_{X_0}=\Vert U \Vert_0=\Vert U\Vert$.

\item  $\QP(\bbR^d)$ {\bf space}: let $\QP(\bbR^d)$ be the space of all $d$-dimensional quasiperiodic functions.
%Now we introduce the Hilbert spaces of the quasiperiodic function.
%{\color{blue}
	%Let $\Tri(\bbR^d)$ denote the complex vector space of all trigonometric polynomials in $\bbR^d$, that is $P\in \Tri(\bbR^d) $ if and only if there exist $c_1,\cdots, c_w\in \bbC$ and
	%$\blam_1,\cdots,\blam_w\in\bbR^d$ such that
	%\begin{align*}
	%	P(\bx)=\sum^{w}_{j=1} c_j e^{i\blam_j\cdot \bx},
	%\end{align*}
	%where $\blam_1,\cdots,\blam_w\in\bbR^d$ and $w$ is finite.
	%Obviously, $\Tri(\bbR^d)\subset \QP(\bbR^d)$.}

\item {\bf $L^q_{QP}(\bbR^d)$ space}: for any fixed $q\in [1,\infty)$, denote 
\begin{align*}
	L^q_{QP}(\bbR^d)=\Big \{v(\bx)\in\QP(\bbR^d):~\Vert v \Vert^q_{q}
	=\bbint \vert v(\bx) \vert^q\,d\bx  < \infty \Big \},
\end{align*}
and
\begin{align*}
	L^{\infty}_{QP}(\bbR^d)=\{v(\bx)\in\QP(\bbR^d):~\Vert v \Vert_{\infty}=\sup_{\bx\in \bbR^d}\vert v(\bx) \vert < \infty \},
\end{align*}
the inner product $(\cdot, \cdot)_{L_{QP}^2(\bbR^d)}$ 
\begin{align*}
(v, w)_{L^2_{QP}(\bbR^d)}=\bbint  v(\bx)\bar{w}(\bx)\,d\bx.
\end{align*}
By the Parseval identity \eqref{eq:parseval},  we have
\begin{align*}
	\Vert v \Vert_{L^2_{QP}(\bbR^d)}^2=\sum_{\blam\in\sigma(v)}
	\vert \hv_{\blam} \vert^2.
\end{align*}

% The spaces $\bbC^k_{QP}(\bbR^d)$ are naturally defined as the spaces whose elements $v$ are such that $\partial^{\alpha}v\in \bbC^0_{QP}(\bbR^d)=L^{\infty}_{QP}(\bbR^d)$ for all $\alpha\in J_k$, and $\bbC^{\infty}_{QP}(\bbR^d)=\bigcap^{\infty}_{k=0}\bbC^k_{QP}(\bbR^d)$.

\item {\bf $\calC^{\alpha}_{QP}(\bbR^d)$ space:} the space $\calC^{\alpha}_{QP}(\bbR^d)$ consists of quasiperiodic functions with continuous derivatives up to order $\alpha$ on $\bbR^d$.
The $\calC^{\alpha}_{QP}$-norm of $v\in \calC^{\alpha}_{QP}(\bbR^d)$ is defined by
\begin{align*}
	\Vert v \Vert_{\calC^{\alpha}_{QP}} =\sum_{\vert \bmm\vert \leq \alpha} \sup_{\bx \in \bbR^d}  \vert \partial_{\bx}^{\bmm}v \vert.  
\end{align*}

\item {\bf $H^{\alpha}_{QP}(\bbR^d)$ space:} for any $\alpha\in\bbN_0$, the Hilbert space $H^{\alpha}_{QP}(\bbR^d)$ comprises all quasiperiodic functions with partial derivatives order $\alpha \geq 1$. For $v,w\in H^{\alpha}_{QP}(\bbR^d)$, the inner product $(\cdot, \cdot)_{H^{\alpha}_{QP}(\bbR^d)}$ is
\begin{align*}
	(v, w)_{H^{\alpha}_{QP}(\bbR^d)}=(v, w)_{L^2_{QP}(\bbR^d)}+ \sum_{\vert
		\bmm\vert=\alpha}(\partial^{\bmm}_{\bx} v, \partial^{\bmm}_{\bx} w)_{L^2_{QP}(\bbR^d)}.
\end{align*}
%\begin{thm}[Parseval's equality]
%	\label{thm:parseval}
%	If $f(\bx)\in \QP(\bbR^d)$ such that $f(\bx)\sim \sum_{j=1}^{\infty}c_{\blam_j}(f) e^{i\blam_j\cdot \bx}$,
%	then Parseval's equality
%	\begin{align}
	%		\sum_{j=1}^{\infty}\vert c_{\blam_j}(f)\vert^2
	%		=\lim_{L\rightarrow\infty} \frac{1}{\vert \Omega_L\vert} 
	%		\int_{\Omega_L} \vert f(\bx)\vert^2\,d\bx
	%		\label{eq:parseval}
	%	\end{align}
%	is true.
%\end{thm}
The corresponding norm is
\begin{align*}
	\Vert v \Vert_{H^{\alpha}_{QP}(\bbR^d)}^2=\sum_{\blam\in\sigma(v)}(1+\vert \blam\vert^2)^{\alpha}\vert \hv_{\blam} \vert^2.
\end{align*}
In particular, for $\alpha=0$, $H^0_{QP}(\bbR^d)=L^2_{QP}(\bbR^d)$.
To simplify notation, we denote  $(\cdot, \cdot)=(\cdot, \cdot)_{L^2_{QP}(\bbR^d)}
$ and $(\cdot, \cdot)_{\alpha}=(\cdot, \cdot)_{H^{\alpha}_{QP}(\bbR^d)}$.
%Correspondingly, $\Vert \cdot \Vert=\Vert \cdot\Vert_{L_{QP}^2(\bbR^d)}$ and $\Vert \cdot \Vert_{\alpha}=\Vert \cdot \Vert_{H_{QP}^{\alpha}(\bbR^d)}$. 
The embedding theorem of $H^{\alpha}_{QP}(\bbR^d)$ can be found in \cite{Iannacci1998embedding}.
\end{itemize}

% For any $v\in\QP(\bbR^d)$, assume that the Fourier series expansion is
% \begin{align*}
	% 	v(\bx)=\sum_{\blam\in\sigma(v)} \hv_{\blam}e^{i\blam\cdot \bx}.
	% \end{align*}
% For $\alpha\in\bbR$, the linear operator $(-\Delta)^\alpha$ is given by
% \begin{align*}
	% 	(-\Delta)^\alpha v
	% 	=\sum_{\blam\in\sigma(v)} \hv_{\blam} \Vert \blam \Vert^{2\alpha} e^{i\blam\cdot \bx},
	% \end{align*}
% and
% \begin{align*}
	% 	X_{\alpha}=
	% 	\Big \{v(\bx)=\sum_{\blam\in\sigma(v)} \hv_{\blam}e^{i\blam\cdot \bx}\in\QP(\bbR^d):
	% 	\Vert  (-\Delta)^\alpha v \Vert^2
	% 	=\sum_{\blam\in\sigma(v)} \vert \hv_{\blam}\vert^2\cdot
	% 	\Vert \blam \Vert^{4\alpha} <\infty \Big \}.
	% \end{align*}
% The domain $X_{\alpha}$ forms a Hilbert space with inner product 
% \begin{align*}
	% 	(v,w)_{X_{\alpha}}=(v,w)+((-\Delta)^\alpha v,(-\Delta)^\alpha w),
	% \end{align*}
% and
% \begin{align*}
	% 	\Vert v\Vert_{X_{\alpha}}=\sum_{\blam\in\sigma(v)} 
	% 	(1+\Vert \blam \Vert^{4\alpha}) \vert \hv_{\blam} \vert^2.
	% \end{align*}
% Obviously, when $\alpha=0$, $\Vert v\Vert_{X_0}=\Vert v\Vert$.

A recent study has revealed an important relationship between quasiperiodic function and its parent function, see Lemma \ref{lem:object}.
\begin{lemma}[\cite{Jiang2022Numerical}]
	\label{lem:object}
	Consider a quasiperiodic function $v(\bx)=v_p(\bP^{T}\bx)$ where $v_p(\by)$ is its parent function and $\bP\in \bbP^{d\times n}$. Let the quasiperiodic Fourier coefficient $\hv_{\blam}=(v, e^{i\blam\cdot \bx})$ and the periodic Fourier coefficient $\hv_p(\bk)=(v_p, e^{i\bk\cdot \by})_{L^2(\bbT^n)}$. Then, we have
	\begin{align*}
		\hv_{\blam}= \hv_p(\bk),
	\end{align*}
	where $\blam=\bP\bk$, $\bk\in\bbZ^n$. 
	% 	\begin{align}
		% 		\hu_{\blam}=\bbint e^{-i\blam\cdot \bx}u(\bx)\,d\bx,
		% 		\label{eq:defak}
		% 	\end{align}
	% 	and
	% 	\begin{align}
		% 		\hu_p(\bk)=\frac{1}{\vert \bbT^n\vert}\int_{\bbT^n}e^{-i\bk\cdot\by}u_p(\by)\,d\by.
		% 		\label{eq:defbk}
		% 	\end{align}
	%	with $|\bbT^n|$ is the measure of the $n$-dimensional tours.
\end{lemma}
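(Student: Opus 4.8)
The plan is to relate the continuous Fourier--Bohr transform of $v$ directly to the continuous average over the irrational flow on $\bbT^n$ and then to recognize the latter as the periodic Fourier coefficient of $v_p$. First I would write, for $\blam=\bP\bk$, the defining limit
\begin{align*}
	\hv_{\blam}=\lim_{L\to\infty}\frac{1}{|\Omega_L|}\int_{\Omega_L} v(\bx)e^{-i\blam\cdot\bx}\,d\bx
	=\lim_{L\to\infty}\frac{1}{(2L)^d}\int_{\Omega_L} v_p(\bP^T\bx)e^{-i(\bP\bk)\cdot\bx}\,d\bx,
\end{align*}
and use $(\bP\bk)\cdot\bx=\bk\cdot(\bP^T\bx)$ so that the integrand becomes $\big(v_p(\by)e^{-i\bk\cdot\by}\big)\big|_{\by=\bP^T\bx}$, i.e. the pullback under the linear map $\bx\mapsto\bP^T\bx$ of the single periodic function $g(\by):=v_p(\by)e^{-i\bk\cdot\by}\in L^2(\bbT^n)$. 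The whole statement thus reduces to the claim that the continuous spatial average of $g(\bP^T\bx)$ over large cubes $\Omega_L$ equals the mean $\frac{1}{|\bbT^n|}\int_{\bbT^n}g(\by)\,d\by$, which by definition of $\hv_p(\bk)$ is exactly $\hv_p(\bk)$.

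To prove that averaging claim I would first check it on the trigonometric monomials $g(\by)=e^{i\bell\cdot\by}$, $\bell\in\bbZ^n$. For $\bell=\bo$ the average is trivially $1$. For $\bell\neq\bo$, $g(\bP^T\bx)=e^{i(\bP\bell)\cdot\bx}$ with $\bP\bell\neq\bo$ because the columns of $\bP$ are $\bbQ$-linearly independent (Definition of $\bbP^{d\times n}$), so $\bP\bell\neq\bo$ whenever $\bell\in\bbZ^n\setminus\{\bo\}$; then $\frac{1}{(2L)^d}\int_{\Omega_L}e^{i(\bP\bell)\cdot\bx}\,d\bx$ factors as a product of one-dimensional integrals $\frac{1}{2L}\int_{-L}^{L}e^{i(\bP\bell)_jx_j}\,dx_j=\frac{\sin((\bP\bell)_jL)}{(\bP\bell)_jL}$ over those $j$ with $(\bP\bell)_j\neq0$ (at least one such $j$ exists), and each such factor tends to $0$ as $L\to\infty$, hence the average vanishes, matching $\frac{1}{|\bbT^n|}\int_{\bbT^n}e^{i\bell\cdot\by}\,d\by=0$. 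By linearity the average claim holds for all trigonometric polynomials.

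The remaining step is to pass from trigonometric polynomials to a general $g\in L^2(\bbT^n)$; this is the part that needs the most care. I would approximate $g$ in $L^2(\bbT^n)$ by its partial Fourier sums $g_K$ and estimate
\begin{align*}
	\left|\frac{1}{|\Omega_L|}\int_{\Omega_L}\!\big(g-g_K\big)(\bP^T\bx)\,d\bx\right|
	\le \frac{1}{|\Omega_L|}\int_{\Omega_L}\!\big|(g-g_K)(\bP^T\bx)\big|\,d\bx,
\end{align*}
and bound the right-hand side uniformly in $L$ using that the spatial average of $|g-g_K|$ along the flow converges, as $L\to\infty$, to the mean of $|g-g_K|$ over $\bbT^n$ (this is again the Weyl-type equidistribution statement, applied to $|g-g_K|\in L^1(\bbT^n)$, which one gets by a standard density/monotone-class argument once it is known on continuous functions, where uniform continuity makes it classical). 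Combined with $\|g-g_K\|_{L^1(\bbT^n)}\to0$, a $3\varepsilon$ argument shows the average of $g(\bP^T\bx)$ equals $\frac{1}{|\bbT^n|}\int_{\bbT^n}g$, completing the proof. Alternatively, if one is willing to assume the Fourier series of $v_p$ converges absolutely (a hypothesis compatible with the function classes used later in the paper), the passage to the limit is immediate by dominated convergence and no equidistribution machinery beyond the monomial case is needed; I expect the authors take this lighter route. The main obstacle is precisely this interchange of the $L\to\infty$ limit with the (infinite) Fourier expansion of $v_p$ — i.e. justifying equidistribution for merely $L^2$ (or $L^1$) parent functions rather than for smooth ones.
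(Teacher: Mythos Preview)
The paper does not supply its own proof of this lemma: it is quoted verbatim from \cite{Jiang2022Numerical} and used as a black box (the text immediately proceeds to ``Based on Lemma \ref{lem:object}, we prove\ldots''). So there is no in-paper argument to compare against.

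Your proposal is the natural and correct route. The reduction via $(\bP\bk)\cdot\bx=\bk\cdot(\bP^T\bx)$ to the single averaging claim for $g(\by)=v_p(\by)e^{-i\bk\cdot\by}$ is exactly the right move, and your verification on monomials is clean: the crucial fact that $\bell\in\bbZ^n\setminus\{\bo\}$ forces $\bP\bell\neq\bo$ is precisely the $\bbQ$-linear independence hypothesis on the columns of $\bP$. Your observation that the lighter hypothesis of absolute convergence of the Fourier series of $v_p$ trivializes the limit interchange is also well taken, and is indeed the regime in which the paper operates (cf.\ the sentence following \eqref{eq:Fouriercoeff}). One small caution on the general $L^2$ case: as written, your $3\varepsilon$ step invokes the very averaging statement you are proving, applied to $|g-g_K|$; you correctly flag this and indicate the fix (pass first from trigonometric polynomials to continuous functions by uniform approximation, where the bound is immediate, and only then to $L^1$ by density), but in a polished write-up that two-stage extension should be made explicit rather than folded into a parenthetical.
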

Based on Lemma \ref{lem:object}, we prove the norm inequality between quasiperiodic function and its parent function.
\begin{thm}
\label{lem:normineq}
	%	[Lemma 1 (ii) \cite{Thalhammer2012Convergence}]
	For any $v\in \QP(\bbR^d)$, and $v_p$ is the corresponding $n$-dimensional parent function. Assume that $\alpha\geq s/2> n/4$ and $v_p\in X_{\alpha}$. Then, the bound
	\begin{align}
		\Vert v\Vert_{L_{QP}^{\infty}(\bbR^d)}\leq C\, \Vert v_p\Vert_{s} \leq C\, \Vert v_p\Vert_{X_{\alpha}}
		\label{eq:inftyembedX}
	\end{align} 
	is valid and the following estimate
	\begin{align*}
		\Vert wv \Vert_{L_{QP}^2(\bbR^d)} \leq C \, \Vert w \Vert_{L_{QP}^2(\bbR^d)} \cdot\Vert v_p\Vert_{X_\alpha},
		~~w\in L_{QP}^{2}(\bbR^d),~~v_p\in X_{\alpha}
	\end{align*}
	holds where $C$ is a constant.
\end{thm}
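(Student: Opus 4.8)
The plan is to derive both bounds from the Fourier side, using Lemma~\ref{lem:object} to transfer everything about $v$ to its parent $v_p$, together with the Parseval identity \eqref{eq:parseval} and a Cauchy--Schwarz argument that is essentially the classical Sobolev embedding $H^s(\bbT^n)\hookrightarrow L^\infty(\bbT^n)$ for $s>n/2$. First I would prove \eqref{eq:inftyembedX}. Since $v(\bx)=v_p(\bP^T\bx)$ and $\hv_{\blam}=\hv_p(\bk)$ for $\blam=\bP\bk$, the absolute convergence of the Fourier series gives, for every $\bx\in\bbR^d$,
\begin{align*}
	\vert v(\bx)\vert \leq \sum_{\bk\in\bbZ^n}\vert\hv_p(\bk)\vert
	=\sum_{\bk\in\bbZ^n}(1+\vert\bk\vert^2)^{-s/2}(1+\vert\bk\vert^2)^{s/2}\vert\hv_p(\bk)\vert.
\end{align*}
Applying Cauchy--Schwarz splits this as $\big(\sum_{\bk}(1+\vert\bk\vert^2)^{-s}\big)^{1/2}\,\Vert v_p\Vert_s$; the first factor is a finite constant $C=C(n,s)$ precisely because $s>n/2$, which is exactly the hypothesis $s/2>n/4$. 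This yields $\Vert v\Vert_{L^\infty_{QP}}\le C\Vert v_p\Vert_s$. The second inequality in \eqref{eq:inftyembedX}, $\Vert v_p\Vert_s\le C\Vert v_p\Vert_{X_\alpha}$, reduces to the pointwise comparison of Fourier multipliers: one checks that $(1+\vert\bk\vert^2)^{s}\le C(1+\Vert\bk\Vert^{4\alpha})$ for all $\bk\in\bbZ^n$ whenever $2\alpha\ge s$ (using the equivalence of $\vert\cdot\vert$ and $\Vert\cdot\Vert$ on $\bbR^n$ and treating $\vert\bk\vert\le 1$ and $\vert\bk\vert\ge 1$ separately), and then summing against $\vert\hv_p(\bk)\vert^2$ gives $\Vert v_p\Vert_s^2\le C\Vert v_p\Vert_{X_\alpha}^2$.

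For the product estimate, I would argue directly in physical space: $v\in L^\infty_{QP}(\bbR^d)$ by the bound just proved, so for $w\in L^2_{QP}(\bbR^d)$ the Fourier--Bohr mean-value functional controls
\begin{align*}
	\Vert wv\Vert_{L^2_{QP}(\bbR^d)}^2
	=\bbint \vert w(\bx)\vert^2\vert v(\bx)\vert^2\,d\bx
	\leq \Vert v\Vert_{L^\infty_{QP}(\bbR^d)}^2\,\bbint\vert w(\bx)\vert^2\,d\bx
	=\Vert v\Vert_{L^\infty_{QP}(\bbR^d)}^2\,\Vert w\Vert_{L^2_{QP}(\bbR^d)}^2.
\end{align*}
Taking square roots and inserting \eqref{eq:inftyembedX} gives $\Vert wv\Vert_{L^2_{QP}}\le C\Vert w\Vert_{L^2_{QP}}\Vert v_p\Vert_{X_\alpha}$, which is the claim. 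The only point needing a word of care is that $\vert v\vert^2$ is itself quasiperiodic (a product of quasiperiodic functions sharing the projection matrix $\bP$), so the Fourier--Bohr mean $\bbint\vert v\vert^2\vert w\vert^2$ is well defined and the elementary inequality $\bbint fg\le \Vert f\Vert_\infty\bbint g$ for nonnegative $f,g$ applies at the level of the averaging limit in \eqref{eq:transform-FC}.

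The main obstacle is really just the bookkeeping in the first step: making the constant $C$ in the Sobolev-type sum depend only on $n$ and $s$, and verifying the multiplier comparison $(1+\vert\bk\vert^2)^s\lesssim 1+\Vert\bk\Vert^{4\alpha}$ uniformly in $\bk$ — both are routine but must be stated so that $C$ is genuinely independent of $v$ and $w$. No compactness or duality is needed; everything is an explicit Cauchy--Schwarz plus the convergence of $\sum_{\bk\in\bbZ^n}(1+\vert\bk\vert^2)^{-s}$, which holds iff $2s>n$, i.e. $s>n/2$, matching the stated hypothesis $\alpha\ge s/2>n/4$.
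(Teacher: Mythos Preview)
Your proposal is correct and follows essentially the same route as the paper's proof: both transfer to the parent function via Lemma~\ref{lem:object}, bound $\sum_{\bk}\vert\hv_p(\bk)\vert$ by Cauchy--Schwarz against the convergent weight $\sum_{\bk}(1+\vert\bk\vert^2)^{-s}$ for $s>n/2$, compare the multipliers $(1+\vert\bk\vert^2)^s\lesssim 1+\Vert\bk\Vert^{4\alpha}$ when $2\alpha\ge s$, and finish the product estimate by the trivial bound $\Vert wv\Vert_{L^2_{QP}}\le\Vert v\Vert_{L^\infty_{QP}}\Vert w\Vert_{L^2_{QP}}$. If anything, your multiplier comparison and Cauchy--Schwarz step are written more carefully than the paper's, which contains minor typos there.
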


\begin{proof} 
	%Firstly, we prove that the inequality $\Vert v\Vert_{L_{QP}^{\infty}(\bbR^d)}\leq C\, \Vert v_p\Vert_{\delta}$ is true. 
	Applying Lemma \ref{lem:object} and Cauchy-Schwarz inequality, we have
	\begin{align*}
		\sum_{j=1}^{\infty}\vert \hv_{\blam_j}\vert
		&=\sum_{j=1}^{\infty}\Big [\vert \hv_p(\bk_j)\vert\, (1+\vert \bk_j\vert^2)^{s/2}\, (1+\vert \bk_j\vert^2)^{-s/2} \Big]\\
		&\leq \sum_{j=1}^{\infty}\Big [\vert \hv_p(\bk_j)\vert^2\, (1+\vert \bk_j\vert^2)^{s}\Big]\,\sum_{j=1}^{\infty} (1+\vert \bk_j\vert^2)^{-s}\\
		&=C\Vert v_p\Vert_s.
	\end{align*}
	The last estimate holds since $s> n/2$ implies the series $\sum_{j=1}^{\infty} (1+\vert \bk_j\vert^2)^{-s}$ converges.
	%Note that $v_p\in X_{\alpha}$ implies $v\in L_{QP}^1(\bbR^d)$. 
	When the Fourier series of $v$ is absolutely convergent, then it uniformly convergent to the quasiperiodic function $v$ (Theorem 1.20 in \cite{Corduneanu1989almost}), such that
	$\Vert v\Vert_{L_{QP}^{\infty}(\bbR^d)}\leq \sum_{j=1}^{\infty}\vert \hv_{\blam_j}\vert$. 
	Therefore, we have
	$\Vert v\Vert_{L_{QP}^{\infty}(\bbR^d)}\leq C\, \Vert v_p\Vert_{s}$, where $C$ is a positive constant. 
	Using the definition of $\Vert \cdot\Vert_{s}$ and $\Vert \cdot \Vert_{X_\alpha}$, we can obtain
	\begin{align*}
		\Vert v_p\Vert_{s} \leq C\,\Vert v_p\Vert_{X_{2s}},
	\end{align*}
	and for $0\leq \alpha_1\leq \alpha_2$,
	\begin{align*}
		\Vert v_p\Vert_{X_{\alpha_1}} \leq \Vert v_p\Vert_{X_{\alpha_2}}.
	\end{align*}
	Combining $\alpha\geq s/2$, it follows that 
	$\Vert v_p\Vert_{s}\leq C\,\Vert v_p\Vert_{X_{\alpha}}$. Therefore, the inequality \eqref{eq:inftyembedX} holds.
Further, for any $w\in L_{QP}^{2}(\bbR^d)$, we can obtain
\begin{align*}
\Vert vw \Vert_{L_{QP}^2(\bbR^d)} \leq \Vert w\Vert_{L_{QP}^2(\bbR^d)} \cdot
\Vert v \Vert_{L_{QP}^{\infty}(\bbR^d)}
		\leq C\, \Vert w\Vert_{L_{QP}^2(\bbR^d)} \cdot \Vert v_p \Vert_{X_\alpha}.
	\end{align*}
\end{proof}

% \begin{remark}
	% \jk{REWORD: what's your innovation ??}
	
	% Iannacci \textit{et al.} \cite{Iannacci1998embedding} have derived an embedding theorem of quasiperiodic function space $H^{\alpha}_{QP}(\bbR^d)$ when the Fourier exponents satisfy the below assumption
	% \begin{align*}
		% \sum_{j=1}^{\infty}\frac{1}{\vert\blam_j\vert^\gamma}< \infty, ~~\forall~\gamma >\beta\geq 0.
		% \end{align*}
	% Here, $\beta$ is the exponent relevant to the regularity of the embeddings. With these embedding theorems, we can also analyze the norm inequality of quasiperiodic function. 
	% \end{remark}

%  \begin{remark}
	%  The small divisor problem is always unavoidable in the analysis of many quasiperiodic systems. In the following numerical analysis of TQSE, inspired by the relationship between quasiperiodic functions and corresponding parent functions, this problem can be effectively avoided. 
	%  \end{remark}

\subsection{Convergence analysis}

This subsection presents the convergence analysis of full discretization scheme \eqref{eq:splitting-PM}.

\subsubsection{Main theorem}

Let $u^j_p$ denote the parent function of $u^j$, where $u^j$ is the approximate solution at $t=t_j$ exactly satisfying the scheme \eqref{eq:splitting}. Below, we give the main theorem of the error analysis of PM-OS2.
\begin{thm}
	\label{thm:PMerror}
	Let $u(\cdot,t_m)$ and $u^m_N$ be the solutions of problems \eqref{eq:QSE-re} and \eqref{eq:splitting-PM} at $t_m$, respectively. Then under the conditions
	
	(i) The potential $V$ is a $\calC^1$-smooth function and $\Vert V_p\Vert_{X_\alpha}\leq C_V$, $\alpha\geq s/2> n/4$;
	
	(ii) The parent function $u^j_p\in H^{\alpha}(\bbT^n),~0\leq j\leq m$;\\
	the global error bound of PM-OS2 \eqref{eq:splitting-PM} is	
	\begin{align*}
		\Vert u^m_N- u(\cdot,t_m)\Vert_{L_{QP}^2(\bbR^d)}\leq C(\tau^2 + N^{-\alpha}).
	\end{align*}
	The constant $C>0$ depends on the bounds $C_V$, $\sup\{\Vert u(\cdot, t) \Vert_{L_{QP}^2(\bbR^d)}: 0\leq t \leq T \}$ and $\max\{\vert u^j_p\vert_{\alpha}: 0\leq j\leq m \}$.
\end{thm}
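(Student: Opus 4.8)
The standard way to bound the global error of a fully-discrete scheme is the split
\[
\|u^m_N - u(\cdot,t_m)\| \;\le\; \|u^m_N - u^m\| \;+\; \|u^m - u(\cdot,t_m)\|,
\]
where $u^m = \mathcal{S}^m u^0$ is the exact solution of the semi-discrete (in time) splitting scheme \eqref{eq:splitting}. The second term is the classical temporal error of Strang splitting for the Schr\"odinger equation; under the $\mathcal{C}^1$-smoothness of $V$ (condition (i)) and boundedness of $\|u(\cdot,t)\|$ it is $O(\tau^2)$ by the standard Lie-commutator/Baker–Campbell–Hausdorff analysis (cf. \cite{Jahnke2000error}), so I would invoke that and concentrate on the first term, the spatial (projection/interpolation) error.

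For the spatial term I would proceed by a discrete Gronwall / telescoping argument. Writing $\mathcal{S} = e^{-\frac{i}{2}\tau A}e^{-i\tau V}e^{-\frac{i}{2}\tau A}$ and $\mathcal{S}_N = e^{-\frac{i}{2}\tau A}I_N e^{-i\tau V}I_N e^{-\frac{i}{2}\tau A}I_N$, I would telescope
\[
u^m_N - u^m \;=\; \mathcal{S}_N^m u^0 - \mathcal{S}^m u^0 \;=\; \sum_{j=0}^{m-1}\mathcal{S}_N^{\,m-1-j}\big(\mathcal{S}_N - \mathcal{S}\big)\mathcal{S}^{\,j}u^0 .
\]
The key points are: (a) $e^{-\frac{i}{2}\tau A}$ is an isometry on $L^2_{QP}$ (it multiplies Fourier–Bohr coefficients by unimodular factors), and $I_N$ is uniformly bounded on the relevant space, so $\|\mathcal{S}_N^{\,m-1-j}\|\le$ const; (b) the one-step consistency error $\|(\mathcal{S}_N - \mathcal{S})w\|$ must be estimated for $w = \mathcal{S}^j u^0 = u^j$. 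Inserting and removing $I_N$ factors one at a time, $(\mathcal{S}_N-\mathcal{S})w$ is a sum of terms each containing one factor $(I_N - \mathrm{Id})$ acting on either $u^j$ (or a frequency-truncated, hence smooth, modification of it) or on $e^{-i\tau V}(\cdots)$. Here I use Theorem \ref{lem:normineq}: multiplication by $e^{-i\tau V}$ (whose parent function lies in $X_\alpha$ by (i) together with the fact that $X_\alpha$ is a Banach algebra for $\alpha > n/4$) maps $H^\alpha(\bbT^n)$ to itself boundedly, so the $H^\alpha$-regularity of $u^j_p$ (condition (ii)) is preserved through the $e^{-i\tau V}$ step, while $e^{-\frac{i}{2}\tau A}$ trivially preserves $H^\alpha$ of the parent. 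Then the interpolation error estimate for the projection method from \cite{Jiang2022Numerical}, namely $\|(I_N - \mathrm{Id})v\|_{L^2_{QP}} \le C N^{-\alpha}\,|v_p|_{\alpha}$, gives each term a bound $C\tau N^{-\alpha}\max_j|u^j_p|_\alpha$. Summing over $j$ (there are $m \le T/\tau$ terms) yields $\|u^m_N-u^m\| \le C N^{-\alpha}$, with the $\tau$ from the one-step estimate cancelling the $1/\tau$ from the number of steps.

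**Main obstacle.** The delicate step is (b): controlling the regularity of the intermediate iterates so that the interpolation estimate $N^{-\alpha}|v_p|_\alpha$ can be applied with a bound uniform in $j$ and in $\tau$. One must check that $e^{-i\tau V}$ does not inflate the $H^\alpha(\bbT^n)$ seminorm of the parent function in a $\tau$-dependent way — this is where $\|V_p\|_{X_\alpha}\le C_V$ and the algebra property of $X_\alpha$ (equivalently $H^\alpha$ for $\alpha>n/4$) are essential, giving $\|e^{-i\tau V_p}\phi_p\|_\alpha \le e^{C\tau C_V}\|\phi_p\|_\alpha \le C(T)\|\phi_p\|_\alpha$. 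A secondary subtlety is that $\mathcal{S}_N$ need not be an exact isometry (because of the $I_N$ factors), so I would either work with the bound $\|I_N\|_{L^2_{QP}\leftarrow ?}$ on the range of the smoothing it is applied to, or absorb the $O(1)$ growth per step into an $e^{CT}$ factor via discrete Gronwall — harmless since $m\tau\le T$. Once these regularity bookkeeping issues are handled, assembling $O(\tau^2) + O(N^{-\alpha})$ is routine.
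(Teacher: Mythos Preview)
Your overall architecture---split into temporal error plus spatial error, handle the temporal piece by the Strang/Jahnke--Lubich commutator bounds, and telescope the spatial piece---matches the paper. The genuine gap is in step~(b): your claim that each local term $(\mathcal{S}_N-\mathcal{S})u^j$ is $O(\tau N^{-\alpha})$ is not supported by the ``insert $(I_N-\mathcal{I})$ one factor at a time'' argument you sketch. In that decomposition the innermost insertion produces the term
\[
e^{-\frac{i}{2}\tau A}\,e^{-i\tau V}\,e^{-\frac{i}{2}\tau A}\,(I_N-\mathcal{I})u^j,
\]
and $\|(I_N-\mathcal{I})u^j\|_{L^2_{QP}}\le CN^{-\alpha}|u^j_p|_\alpha$ carries \emph{no} factor of $\tau$. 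Summing $m\sim T/\tau$ such contributions would give $O(\tau^{-1}N^{-\alpha})$, not $O(N^{-\alpha})$.

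The paper repairs this by a different Lady Windermere fan: instead of $(\mathcal{S}_N-\mathcal{S})$ it telescopes with $(\mathcal{S}_N-I_N\mathcal{S})$, so that
\[
u^m_N-u^m=(I_N-\mathcal{I})u^m+\sum_{j=1}^{m}\mathcal{S}_N^{\,m-j}\bigl(\mathcal{S}_N u^{j-1}-I_N\mathcal{S}u^{j-1}\bigr).
\]
Now the ``bad'' $O(N^{-\alpha})$ interpolation error appears only once, outside the sum. The summed local defects $\mathcal{S}_N u^{j-1}-I_N\mathcal{S}u^{j-1}$ are then shown to be $O(\tau N^{-\alpha})$, but not by a naive $(I_N-\mathcal{I})$ insertion: the $\tau$ is extracted via a variation-of-constants identity (Lemma~\ref{lem:defectAIN-INA}),
\[
(I_Ne^{-\frac{i}{2}\tau A}-e^{-\frac{i}{2}\tau A}I_N)v=\int_0^\tau e^{-\frac{i}{2}(t-\tau)A}\,[-\tfrac{i}{2}A,\,I_N-\mathcal{I}]\,e^{-\frac{i}{2}tA}v\,dt,
\]
together with the stability bound $\|I_Ne^{-i\tau V}v\|\le e^{CC_V\tau}\|I_Nv\|$ (Lemma~\ref{lem:defectIN-V-W}), which also takes care of the growth of $\mathcal{S}_N^{\,m-j}$ you flagged as a secondary issue. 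An equivalent way to see that your telescoping actually collapses to the paper's is to note $\mathcal{S}_N(I_N-\mathcal{I})=0$ (since $I_N$ is the rightmost factor of $\mathcal{S}_N$ and $I_N^2=I_N$), so all the $\tau$-free contributions for $j<m-1$ vanish and only $(I_N-\mathcal{I})u^m$ survives---but this cancellation has to be made explicit.

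A smaller point: your ``main obstacle'' about propagating $H^\alpha$ regularity through $e^{-i\tau V}$ via an algebra argument is not needed here. Condition~(ii) already \emph{assumes} $u^j_p\in H^\alpha(\bbT^n)$ for the splitting iterates, and the only intermediate objects whose regularity is invoked in Theorem~3.9 are $\mathcal{S}u^{j-1}=u^j$ and $e^{-\frac{i}{2}\tau A}u^{j-1}$ (whose parent has the same $H^\alpha$ seminorm as $u^{j-1}_p$, since $e^{-\frac{i}{2}\tau A}$ multiplies Fourier coefficients by unimodular factors).
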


The detailed proof of Theorem \ref{thm:PMerror} is given at the end of this section.

\subsubsection{Some results}

Before proving the main conclusion, we present some required results.

\begin{lemma}
\label{thm:selfadjoint}
The operator $A$ defined in \eqref{eq:QSE-re} is self-adjoint in the inner product $L^2_{QP}$.
\end{lemma}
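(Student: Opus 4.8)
The plan is to work entirely in the Fourier–Bohr picture and reduce the statement to the fact that the symbol of $A=-\Delta$ is real and nonnegative. First I would fix the domain $\dom(A)=H^2_{QP}(\bbR^d)$, so that for $u\in\dom(A)$ the expansion $u(\bx)=\sum_j \hu_{\blam_j}e^{i\blam_j\cdot\bx}$ with $\blam_j\in\sigma(u)$ satisfies $\sum_j(1+\vert\blam_j\vert^2)^2\vert\hu_{\blam_j}\vert^2<\infty$; via Lemma \ref{lem:object} this transfers to the genuinely periodic parent function $u_p$ on $\bbT^n$, where term-by-term differentiation is standard, giving $Au=\sum_j\Vert\blam_j\Vert^2\hu_{\blam_j}e^{i\blam_j\cdot\bx}\in L^2_{QP}(\bbR^d)$.

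Next, for $u,v\in\dom(A)$ I would compute the two inner products through the Parseval identity \eqref{eq:parseval} and the $L^2_{QP}$-orthonormality of $\{e^{i\blam\cdot\bx}\}$: one gets $(Au,v)_{L^2_{QP}}=\sum_j\Vert\blam_j\Vert^2\hu_{\blam_j}\overline{\hv_{\blam_j}}$ and $(u,Av)_{L^2_{QP}}=\sum_j\hu_{\blam_j}\overline{\Vert\blam_j\Vert^2\hv_{\blam_j}}=\sum_j\Vert\blam_j\Vert^2\hu_{\blam_j}\overline{\hv_{\blam_j}}$, because $\Vert\blam_j\Vert^2\in\bbR$, and both series converge absolutely by Cauchy–Schwarz. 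Hence $(Au,v)=(u,Av)$, i.e. $A$ is symmetric; taking $v=u$ gives in addition $(Au,u)=\sum_j\Vert\blam_j\Vert^2\vert\hu_{\blam_j}\vert^2\ge 0$.

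To upgrade symmetry to self-adjointness I would check $\dom(A^*)\subseteq\dom(A)$: if $w\in L^2_{QP}$ and $g\in L^2_{QP}$ satisfy $(Au,w)=(u,g)$ for all $u\in\dom(A)$, then testing against $u=e^{i\blam_j\cdot\bx}$ forces $\Vert\blam_j\Vert^2\hw_{\blam_j}=\hat{g}_{\blam_j}$ for every $j$, so $\sum_j\Vert\blam_j\Vert^4\vert\hw_{\blam_j}\vert^2=\sum_j\vert\hat{g}_{\blam_j}\vert^2<\infty$, whence $w\in H^2_{QP}=\dom(A)$ and $A^*w=Aw$. Together with the symmetry just established this yields $A=A^*$. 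Equivalently, one may observe that $A+I$ acts on Fourier–Bohr coefficients as multiplication by $1+\Vert\blam_j\Vert^2$, hence is a bijection from $\dom(A)$ onto $L^2_{QP}(\bbR^d)$, and invoke the standard criterion (a symmetric operator $T$ with $T\pm iI$, or $T+I$ when $T\ge 0$, onto is self-adjoint).

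The only genuinely delicate point is that one cannot argue by integrating $-\Delta u\cdot\bar v$ by parts over $\Omega_L$ and letting $L\to\infty$: for quasiperiodic functions the boundary terms do not obviously vanish. I therefore expect the main thing to get right to be the reduction, via \eqref{eq:transform-FC}, \eqref{eq:parseval} and Lemma \ref{lem:object}, of every statement about $u$ to its periodic parent $u_p$ on $\bbT^n$, together with pinning down $\dom(A)$ precisely; once that is in place all the estimates above are immediate.
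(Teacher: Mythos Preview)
Your Fourier--Bohr argument is correct and, in fact, more careful than the paper's: you pin down $\dom(A)=H^2_{QP}$, verify symmetry via the real symbol $\Vert\blam_j\Vert^2$, and then check $\dom(A^*)\subseteq\dom(A)$ to obtain genuine self-adjointness rather than mere symmetry. The paper, by contrast, only proves the symmetric identity $(Av,w)=(v,Aw)$ and does not discuss domains at all.

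However, your closing remark is mistaken: the paper's proof is \emph{exactly} the integration-by-parts argument you dismissed. For differentiable $v,w\in\QP(\bbR^d)$ one applies Green's identity on $\Omega_L$ to get
\[
(Av,w)-(v,Aw)=-\lim_{L\to\infty}\frac{1}{\vert\Omega_L\vert}\int_{\partial\Omega_L}\frac{\partial v}{\partial\bn}\,w\,dS
+\lim_{L\to\infty}\frac{1}{\vert\Omega_L\vert}\int_{\partial\Omega_L}v\,\frac{\partial w}{\partial\bn}\,dS,
\]
and the point you missed is that the boundary terms need not vanish for fixed $L$: it is the \emph{averaging} by $\vert\Omega_L\vert=(2L)^d$ that kills them, since quasiperiodic functions and their derivatives are bounded and $\vert\partial\Omega_L\vert=2d(2L)^{d-1}$, so each term is $O(L^{-1})$. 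This is a genuinely different mechanism from the periodic case (where the boundary contributions cancel exactly), and it is worth understanding.

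In summary: your route via Parseval is cleaner for operator-theoretic purposes and gives more (nonnegativity, full self-adjointness), while the paper's route is more direct and shows concretely how the mean-value inner product handles boundary terms; but you should retract the claim that the latter approach fails.
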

\begin{proof}
When $v,w\in \QP(\bbR^d)$ and are differentiable, applying the Green identity, we have
\begin{align*}
(Av, w)-(v, Aw)
&=-(\Delta v, w)+(v, \Delta w)\\
&=-\bbint\Delta v\cdot w\,d\bx
+\bbint v\cdot \Delta w\,d\bx\\
&=-\lim_{L\rightarrow\infty} \frac{1}{\vert \Omega_L\vert} \int_{\partial\Omega_L} 
\frac{\partial v}{\partial \bn}\cdot w\,dS
+\lim_{L\rightarrow\infty} \frac{1}{\vert \Omega_L\vert} \int_{\partial\Omega_L} 
v\cdot \frac{\partial w}{\partial \bn}\,dS,
\end{align*}
	where $\bn$ is the outward normal of $\partial\Omega_L$. Since the quasiperiodic functions $v, w$ are bounded, 
	\text{i.e.,} $\sup_{\bx\in \bbR^d} \{v(\bx), w(\bx)\}\leq C<\infty$ , it follows that
	\begin{align*}
		\Big \vert\lim_{L\rightarrow\infty} \frac{1}{\vert \Omega_L\vert} \int_{\partial\Omega_L} 
		\frac{\partial v}{\partial \bn}\cdot w\,dS \Big \vert	
		\leq \lim_{L\rightarrow\infty} \frac{C^22d(2L)^{d-1}}{(2L)^d}=0,~~
		\Big \vert\lim_{L\rightarrow\infty} \frac{1}{\vert \Omega_L\vert} \int_{\partial\Omega_L} 
		v \cdot \frac{\partial w}{\partial \bn}\,dS \Big \vert	
		=0.   
	\end{align*}
	Therefore, $(-\Delta v,w)=(v,-\Delta w)$ is true.
	The proof of this theorem is completed.
\end{proof}

We recall some basic definitions that occur in the following.

\begin{defy}
	The operator $B$ is unitary if $BB^*=B^*B=\calI$, where $\calI$ is the identity operator and $B^*$ is adjoint operator of $B$.
\end{defy}

\begin{defy}
	A one-parametric family $\Psi(t), t\in\bbR$, of linear operators on a Banach space $X$ is a group of operators on $X$ if $\Psi(0)=\calI$ and $\Psi(t+s)=\Psi(t)\Psi(s), 
	s\in\bbR$. 
	Moreover, $\Psi(t)$ is $C_0$-group if $\lim\limits_{t\rightarrow 0} \Psi(t) x=x,$ for all $x\in X$.
\end{defy}

\begin{lemma}[Stone Theorem \cite{Stone1930linear}]
	\label{thm:stone}
	Let $H$ be a Hilbert space, and $\Psi(t)$ be a one-parameter family of linear operators on $H$ for $t\in \bbR$.
	
	(i) If $\Psi(t)$ is $C_0$-group of unitary operators on $H$, then there exists a unique self-adjoint operator $A$ such that $\Psi(t)=e^{-itA}$, where $-iA$ is the generator.
	
	(ii) Conversely, let $A$ be a self-adjoint operator on $H$. Then $\Psi(t)=e^{-itA}$ is a $C_0$-group of unitary operator with the generator $-iA$.
\end{lemma}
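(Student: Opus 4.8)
The plan is to prove the two implications separately, relying on the spectral theorem for the straightforward direction (ii) and on the theory of infinitesimal generators together with the basic self-adjointness criterion for the harder direction (i).

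For part (ii), I would start from the spectral resolution of the self-adjoint operator $A$, writing $A=\int_{\bbR}\lambda\,dE(\lambda)$ with $E$ the associated projection-valued measure, and define $\Psi(t)=e^{-itA}=\int_{\bbR}e^{-it\lambda}\,dE(\lambda)$ via the Borel functional calculus. The group law $\Psi(t+s)=\Psi(t)\Psi(s)$ and $\Psi(0)=\calI$ follow from multiplicativity of the functional calculus, while unitarity follows from $\Psi(t)^{*}=\int_{\bbR}e^{it\lambda}\,dE(\lambda)=\Psi(-t)=\Psi(t)^{-1}$. Strong continuity comes from $\Vert\Psi(t)x-x\Vert^{2}=\int_{\bbR}\vert e^{-it\lambda}-1\vert^{2}\,d\Vert E(\lambda)x\Vert^{2}$ together with dominated convergence, the integrand being bounded by $4$ and the measure $d\Vert E(\lambda)x\Vert^{2}$ being finite with mass $\Vert x\Vert^{2}$. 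Differentiating under the spectral integral on $\dom(A)$ then identifies the generator as $-iA$.

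For part (i), I would define the generator directly: set $\dom(A)=\{x\in H:\lim_{t\to0}t^{-1}(\Psi(t)x-x)\text{ exists}\}$ and $Ax=i\lim_{t\to0}t^{-1}(\Psi(t)x-x)$ on this domain, so that $-iA$ generates $\Psi$. The first step is to prove $\dom(A)$ is dense by mollification: for smooth compactly supported $f$, the averaged vectors $x_{f}=\int_{\bbR}f(t)\Psi(t)x\,dt$ lie in $\dom(A)$, since differentiating $\Psi(s)x_{f}$ in $s$ transfers the derivative onto $f$, and choosing $f$ to be an approximate identity gives $x_{f}\to x$ by strong continuity. The second step is symmetry: for $x,y\in\dom(A)$, the unitarity relation $\Psi(t)^{*}=\Psi(-t)$ lets one move the difference quotient from $x$ onto $y$ and conclude $(Ax,y)=(x,Ay)$.

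The crux, and the main obstacle, is upgrading symmetry to self-adjointness. The plan here is to exhibit the resolvent explicitly via a Laplace transform of the group: for $\mathrm{Im}\,z>0$ the integral $R(z)x=\int_{0}^{\infty}e^{izt}\Psi(t)x\,dt$ converges absolutely, because $\Vert\Psi(t)\Vert=1$ forces $\vert e^{izt}\vert=e^{-t\,\mathrm{Im}\,z}$ to decay, and defines a bounded operator which, up to the constant factor $i$, inverts $A-zI$; the symmetric construction using $\int_{-\infty}^{0}$ handles $\mathrm{Im}\,z<0$. This shows $\mathrm{Ran}(A-zI)=H$ for $z=\pm i$, so both deficiency indices vanish and the standard criterion, that a symmetric operator with $\mathrm{Ran}(A\pm iI)=H$ is self-adjoint, yields self-adjointness. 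Finally, uniqueness and the identity $\Psi(t)=e^{-itA}$ follow from generator theory: by part (ii) the operator $e^{-itA}$ is a $C_{0}$-group of unitaries with generator $-iA$, and two $C_{0}$-groups sharing a generator coincide, while the generator determines $A$ by differentiation at $t=0$. I expect the self-adjointness step to demand the most care, since it is exactly where the unitarity of $\Psi$, rather than mere contractivity, is used to force the deficiency indices to zero.
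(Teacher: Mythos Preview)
The paper does not prove this lemma at all: it is stated as the classical Stone Theorem with a citation to \cite{Stone1930linear} and used as a black box (specifically, to conclude that $\Vert e^{-itA}\Vert_{L_{QP}^2(\bbR^d)}=1$). So there is no ``paper's own proof'' to compare against.

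That said, your sketch is the standard textbook proof and is essentially correct. A couple of minor points worth tightening if you carry it out in full: in part (i), the verification that the Laplace-transform operator $R(z)$ actually maps into $\dom(A)$ and satisfies $(A-zI)R(z)=\calI$ deserves an explicit computation (you apply the difference quotient $t^{-1}(\Psi(t)-I)$ under the integral and integrate by parts); and the appeal to ``two $C_0$-groups sharing a generator coincide'' is itself a short lemma (differentiate $t\mapsto\Psi_1(t)\Psi_2(-t)x$ on a dense domain) that you might want to spell out rather than quote. But none of this is a gap in the strategy.
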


Lemma \ref{thm:selfadjoint} and Lemma \ref{thm:stone} demonstrate $\Vert e^{-itA}\Vert_{L_{QP}^2(\bbR^d)}=1$.
Next, we will give an error bound of OS2 method \eqref{eq:splitting} for TQSE \eqref{eq:QSE-re}. For operators $A$ and $B$, we denote the commutators $[A,B]=AB-BA$ and $[A,[A,B]]=A^2B-2ABA+BA^2$.

\begin{lemma}[Theorem 2.1, \cite{Jahnke2000error}]
	\label{lem:localerror}
	Suppose that the operators $\Phi$ and $\Psi$ are self-adjoint on the Hilbert space $L^2_{QP}(\bbR^d)$. 
	
	(1) Under the following conditions
	
	(i) $\Psi$ is bounded, \text{i.e.,} $\Vert \Psi v\Vert_{L_{QP}^2(\bbR^d)} \leq \beta\Vert v\Vert_{L_{QP}^2(\bbR^d)}$, where $\beta$ is a constant and $v\in L^2_{QP}(\bbR^d)$.
	
	(ii) Commutator bound
	$
	\Vert [\Phi, \Psi] v\Vert_{L_{QP}^2(\bbR^d)} \leq c_1 \Vert v\Vert_{H^1_{QP}}, ~\forall ~v\in H^1_{QP}(\bbR^d).
	$
	
	We have 
	\begin{align*}
		\Vert e^{-\frac{i}{2}\tau \Psi} e^{-i\tau \Phi} e^{-\frac{i}{2}\tau \Psi}v
		-e^{-i\tau (\Phi +\Psi)}v\Vert_{L_{QP}^2(\bbR^d)} \leq C_1 \tau^2 \Vert v\Vert_{L_{QP}^2(\bbR^d)}, 
	\end{align*}
	where $C_1$ depends only on $c_1$ and $\beta$.
	
	(2) Under the conditions in (1) and additionally
	
	(iii) Commutator bound
	$
	\Vert[\Phi, [\Phi, \Psi] ] v\Vert_{L_{QP}^2(\bbR^d)} \leq c_2 \Vert v\Vert_{L_{QP}^2(\bbR^d)},
	~\forall~v\in L^2_{QP}(\bbR^d).
	$ 
	
	We have
	\begin{align*}
		\Vert e^{-\frac{i}{2}\tau \Psi} e^{-i\tau \Phi} e^{-\frac{i}{2}\tau \Psi}v
		-e^{-i\tau (\Phi +\Psi)}v\Vert_{L_{QP}^2(\bbR^d)} \leq C_2 \tau^3 \Vert v\Vert_{L_{QP}^2(\bbR^d)}, 
	\end{align*}
	where $C_2$ depends only on $c_1$, $c_2$ and $\beta$.
\end{lemma}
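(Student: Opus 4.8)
The plan is to control the one-step error through its \emph{defect} relative to the exact flow and then turn the bound into a quadrature of that defect via the variation-of-constants formula. Since $\Phi$ and $\Psi$ are self-adjoint and $\Psi$ is bounded, $\Phi+\Psi$ is again self-adjoint, so Stone's theorem (Lemma \ref{thm:stone}) makes $\calE(\tau):=e^{-i\tau(\Phi+\Psi)}$, together with $e^{-i\tau\Phi}$ and $e^{-i\tau\Psi}$, a $C_0$-group of unitary operators on $L_{QP}^2(\bbR^d)$. Writing $\calF(\tau):=e^{-\frac{i}{2}\tau\Psi}e^{-i\tau\Phi}e^{-\frac{i}{2}\tau\Psi}$ and $\calR(\tau):=\calF(\tau)-\calE(\tau)$, one has $\calR(0)=0$ and $\calR'(\tau)=-i(\Phi+\Psi)\calR(\tau)+\calD(\tau)$, where the defect is $\calD(\tau):=\calF'(\tau)+i(\Phi+\Psi)\calF(\tau)$. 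Duhamel's formula then gives $\calR(\tau)=\int_0^\tau\calE(\tau-s)\calD(s)\,ds$, and unitarity of $\calE$ yields the master estimate $\|\calR(\tau)v\|_{L_{QP}^2(\bbR^d)}\le\int_0^\tau\|\calD(s)v\|_{L_{QP}^2(\bbR^d)}\,ds$. Everything reduces to estimating the defect.

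Next I would compute $\calD$ in closed form. Differentiating the product and using that $\Psi$ commutes with $e^{-\frac{i}{2}\tau\Psi}$, the three derivative terms recombine into two commutators,
\[
\calD(\tau)= i\,[\Phi,\,e^{-\frac{i}{2}\tau\Psi}]\,e^{-i\tau\Phi}e^{-\frac{i}{2}\tau\Psi}
+\frac{i}{2}\,e^{-\frac{i}{2}\tau\Psi}\,[\Psi,\,e^{-i\tau\Phi}]\,e^{-\frac{i}{2}\tau\Psi},
\]
so that $\calD(0)=0$. I would then linearize each bracket with the elementary identity
\[
[\Phi,\,e^{-is\Psi}]=-i\int_0^{s} e^{-i(s-\sigma)\Psi}\,[\Phi,\Psi]\,e^{-i\sigma\Psi}\,d\sigma
\]
and its analogue with $\Phi$ and $\Psi$ interchanged, which exposes a single commutator $[\Phi,\Psi]$ sandwiched between unitary factors.

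For part (1) I would bound the two terms of $\calD$ \emph{separately}: the flanking unitaries drop out by isometry, condition (ii) controls $[\Phi,\Psi]$, and condition (i) together with the action of the groups on $H^1_{QP}(\bbR^d)$ controls the surviving factor, giving $\|\calD(s)v\|\le C\,s\,\|v\|_{H^1_{QP}}$ with $C$ depending only on $\beta$ and $c_1$; the master estimate then delivers the $O(\tau^2)$ bound (the $H^1_{QP}$ regularity being absorbed into the constant when the lemma is applied). For part (2) the improvement to $O(\tau^3)$ rests on a cancellation expressing the time-symmetry of the Strang scheme: the two leading $O(\tau)$ contributions of the two commutator terms are equal and opposite and annihilate, so $\calD(\tau)=O(\tau^2)$. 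To make this quantitative I would carry the integral representations one Taylor order further, keeping the remainders in integral form, after which the residual defect is a fixed combination of iterated commutators,
\[
\calD(\tau)=\frac{i\tau^2}{4}\,[\Phi,[\Phi,\Psi]]+\frac{i\tau^2}{8}\,[\Psi,[\Phi,\Psi]]+O(\tau^3).
\]
Condition (iii) bounds the first in $L_{QP}^2(\bbR^d)$, while the second is controlled by condition (i) (in the intended application $\Psi$ is multiplication by the potential, so $[\Psi,[\Phi,\Psi]]$ is itself a bounded multiplication operator); hence $\|\calD(s)v\|\le C\,s^2\|v\|_{L_{QP}^2(\bbR^d)}$, and the master estimate gives the $\tau^3$ bound.

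The main obstacle is the part-(2) cancellation and the bookkeeping it demands: one must verify that the $O(\tau)$ defect terms cancel exactly, then identify precisely which iterated commutators carry the residual $O(\tau^2)$ part and check that each is controlled in $L_{QP}^2(\bbR^d)$ by the stated hypotheses — this is where both the symmetric placement of the half-steps and the boundedness of $\Psi$ are essential. The delicate point throughout is that $\Phi$ is unbounded, so a formal Baker--Campbell--Hausdorff expansion would manipulate divergent series of $\Phi$; keeping every Taylor step in integral-remainder form, so that $\Phi$ only ever acts through the controlled commutators $[\Phi,\Psi]$ and $[\Phi,[\Phi,\Psi]]$, is what renders the argument rigorous.
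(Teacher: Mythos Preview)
The paper does not actually prove this lemma: it is stated as a citation of Theorem~2.1 in Jahnke--Lubich (reference \cite{Jahnke2000error}) and used as a black box, with the subsequent Theorem~\ref{thm:timeerror} merely verifying that $A=-\Delta$ and the multiplication operator $V$ satisfy the commutator hypotheses. Your proposal, by contrast, sketches an honest proof, and the strategy you describe --- writing the splitting error via the variation-of-constants formula in terms of a defect $\calD(\tau)$, expressing $\calD$ through the commutators $[\Phi,e^{-\frac{i}{2}\tau\Psi}]$ and $[\Psi,e^{-i\tau\Phi}]$, linearizing those via the integral identity, and exploiting the symmetry of the Strang scheme to obtain the $O(\tau^2)$ defect needed for part~(2) --- is precisely the argument of the cited Jahnke--Lubich paper. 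So your approach is correct and matches the source the present paper defers to.

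One small point: you note that in part~(1) the defect bound comes out as $\|\calD(s)v\|\le Cs\,\|v\|_{H^1_{QP}}$ and then remark that the $H^1_{QP}$ norm is ``absorbed into the constant when the lemma is applied.'' That is indeed how it works in the application (where $v=u(\cdot,t)$ has the required regularity), but strictly speaking the lemma as stated here, with $\|v\|_{L_{QP}^2}$ on the right-hand side of the part~(1) conclusion, is slightly stronger than what your argument --- or the original Jahnke--Lubich argument --- delivers under hypothesis~(ii) alone; the Jahnke--Lubich statement has the stronger norm on the right in the first-order case. This is a quirk of the paper's transcription of the lemma, not a gap in your reasoning.
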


\begin{thm}
\label{thm:timeerror}
For a quasiperiodic function $V$, the following conclusions are valid.

(i) If $V\in \calC^2_{QP}(\bbR^d)$, then
$ \Vert \calS v-\calT v\Vert_{L_{QP}^2(\bbR^d)}
\leq C \tau^2 \Vert v \Vert_{L_{QP}^2(\bbR^d)} $. 

(ii) If $V\in \calC^1_{QP}(\bbR^d)$, we have
$\Vert \calS v-\calT v\Vert_{L_{QP}^2(\bbR^d)} \leq C \tau^3 \Vert v \Vert_{L_{QP}^2(\bbR^d)} $.
\end{thm}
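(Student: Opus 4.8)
The plan is to recognise that $\calS=e^{-\frac{i}{2}\tau A}e^{-i\tau V}e^{-\frac{i}{2}\tau A}$ is exactly the symmetric (Strang) splitting of the one-step propagator $\calT=e^{-i\tau(A+V)}$ associated with the decomposition of the generator into the kinetic part $A=-\Delta$ and the potential (multiplication) part $V$, and then to deduce both estimates from the abstract local-error bound Lemma~\ref{lem:localerror}, applied with $\Phi=V$ and $\Psi=A$, so that $e^{-\frac{i}{2}\tau\Psi}e^{-i\tau\Phi}e^{-\frac{i}{2}\tau\Psi}$ coincides with $\calS$. The whole argument then reduces to checking the structural hypotheses of that lemma and to two explicit commutator computations.

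First I would settle the structural requirements. By Lemma~\ref{thm:selfadjoint}, $A$ is self-adjoint on $L^2_{QP}(\bbR^d)$; since $V\in\calC^1_{QP}(\bbR^d)\subset\calC^0_{QP}(\bbR^d)$ is bounded (the $\calC^0_{QP}$-norm is a supremum norm) and, being a physical potential, real-valued, the operator $v\mapsto Vv$ is bounded and self-adjoint on $L^2_{QP}(\bbR^d)$, which supplies the boundedness hypothesis of Lemma~\ref{lem:localerror}. I would also record that, by Lemma~\ref{thm:stone} together with Lemma~\ref{thm:selfadjoint}, $e^{-itA}$ is a $C_0$-group of unitary operators on $L^2_{QP}(\bbR^d)$, hence $\|e^{-itA}\|_{L^2_{QP}(\bbR^d)}=1$; this is what keeps all constants produced by Lemma~\ref{lem:localerror} uniform in $t\in[0,T]$.

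The substance of the proof is the two commutator estimates, obtained by a direct calculation on a dense core (the trigonometric quasiperiodic polynomials, or $H^2_{QP}(\bbR^d)$). Expanding $\Delta(Vv)$ gives
\begin{align*}
	[\Phi,\Psi]v=(VA-AV)v=-V\Delta v+\Delta(Vv)=(\Delta V)\,v+2\,\nabla V\cdot\nabla v ,
\end{align*}
so $[\Phi,\Psi]$ is a first-order differential operator with coefficients $\Delta V$ and $\nabla V$, both bounded when $V\in\calC^2_{QP}(\bbR^d)$; hence
\begin{align*}
	\|[\Phi,\Psi]v\|_{L^2_{QP}(\bbR^d)}\le\|\Delta V\|_{L^\infty_{QP}(\bbR^d)}\|v\|_{L^2_{QP}(\bbR^d)}+2\|\nabla V\|_{L^\infty_{QP}(\bbR^d)}\|v\|_{H^1_{QP}(\bbR^d)}\le c_1\|v\|_{H^1_{QP}(\bbR^d)} ,
\end{align*}
which is exactly the commutator hypothesis of Lemma~\ref{lem:localerror}(1), whose conclusion yields assertion~(i): $\|\calS v-\calT v\|_{L^2_{QP}(\bbR^d)}\le C\tau^2\|v\|_{L^2_{QP}(\bbR^d)}$. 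For assertion~(ii) I would additionally compute the iterated commutator; since two multiplication operators commute,
\begin{align*}
	[\Phi,[\Phi,\Psi]]v=[V,(\Delta V)+2\nabla V\cdot\nabla]v=2[V,\nabla V\cdot\nabla]v=-2|\nabla V|^2v ,
\end{align*}
which is a bounded multiplication operator once $\nabla V\in L^\infty_{QP}(\bbR^d)$, i.e.\ $V\in\calC^1_{QP}(\bbR^d)$, so that $\|[\Phi,[\Phi,\Psi]]v\|_{L^2_{QP}(\bbR^d)}\le c_2\|v\|_{L^2_{QP}(\bbR^d)}$. Feeding this extra bound into Lemma~\ref{lem:localerror}(2) upgrades the estimate to $\|\calS v-\calT v\|_{L^2_{QP}(\bbR^d)}\le C\tau^3\|v\|_{L^2_{QP}(\bbR^d)}$, which is assertion~(ii); in both cases the constant $C$ is traced back to $\|V\|_{\calC^2_{QP}}$ (resp.\ $\|V\|_{\calC^1_{QP}}$) and to $\|e^{-itA}\|_{L^2_{QP}(\bbR^d)}=1$.

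The step I expect to be the main obstacle is the rigorous justification of invoking the abstract Lemma~\ref{lem:localerror} within the quasiperiodic Hilbert-space framework: one has to check that $A$ and the commutators $[V,A]$ and $[V,[V,A]]$ are densely defined, that the variation-of-constants/Taylor-expansion representation of $\calS-\calT$ underlying the lemma remains legitimate here, and in particular that $[V,A]$ is $A$-bounded — so its first-order part is genuinely controlled by $\|\cdot\|_{H^1_{QP}}$ through $\|\nabla v\|_{L^2_{QP}(\bbR^d)}^2=(Av,v)_{L^2_{QP}(\bbR^d)}$ — and that the milder regularity $V\in\calC^1_{QP}(\bbR^d)$ used in (ii) indeed suffices for part~(2) of the lemma, perhaps via a sharper, duality-based treatment of the lower-order commutator. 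Once these domain questions are dispatched, the remainder — matching signs and the roles $\Phi=V$, $\Psi=A$, and collecting the constants — is routine, using the boundedness of smooth quasiperiodic functions and the isometry of $e^{-itA}$ on $L^2_{QP}(\bbR^d)$.
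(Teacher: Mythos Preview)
Your proposal is correct and follows essentially the same route as the paper: verify self-adjointness of $A$ via Lemma~\ref{thm:selfadjoint}, compute $[\Delta,V]\phi=(\Delta V)\phi+2\nabla V\cdot\nabla\phi$ and $[V,[V,\Delta]]\phi=2|\nabla V|^2\phi$, and then invoke Lemma~\ref{lem:localerror}. Your write-up is in fact more careful than the paper's, which omits the discussion of boundedness of $V$ and of the unitarity of $e^{-itA}$; your closing caveat about whether the weaker hypothesis $V\in\calC^1_{QP}$ really suffices for part~(2) (which formally presupposes the conditions of part~(1)) is well placed, since the paper does not address this point either.
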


\begin{proof}
	%Obviously, $V$ is bounded.
	According to Lemma \ref{thm:selfadjoint}, the operator $A=-\Delta$ is self-adjoint in the sense of $L^2_{QP}$-norm. Next, we verify that the operators $A$ and $V$ satisfy commutator bounds.
	From the equation
\begin{align*}
\Delta(\phi\psi)=(\Delta\phi) \psi+2(\nabla \phi) (\nabla \psi)+ \phi(\Delta\psi),
\end{align*}
we have
\begin{align*}
\Vert [\Delta, V]\phi\Vert_{L_{QP}^2(\bbR^d)}
\leq \Vert(\Delta V)\phi \Vert_{L_{QP}^2(\bbR^d)} 
+2\Vert (\nabla V)\cdot (\nabla \phi)\Vert_{L_{QP}^2(\bbR^d)}
\leq C(\Vert V\Vert_{\calC_{QP}^2}, \Vert \phi\Vert_{1}),  \end{align*}
	%and
	%\begin{align*}
	%	\Vert[\Delta, [\Delta, V]]\phi\Vert
	%	\leq 8 \Vert(\Delta V)(\Delta \phi) \Vert+\Vert(\Delta^2 V) \phi\Vert 
	%	+4\Vert (\nabla^3 V)\cdot (\nabla \phi)\Vert
	%	\leq C(\Vert V\Vert_{C_{QP}^4}, \Vert \phi\Vert_{H^2}).  
	%\end{align*}
and
\begin{align*}
\Vert[V, [V, \Delta]]\phi\Vert_{L_{QP}^2(\bbR^d)}
=\Vert 2 (\nabla V)(\nabla V)\phi\Vert_{L_{QP}^2(\bbR^d)}
\leq C(\Vert V\Vert_{\calC_{QP}^1}, \Vert \phi\Vert).  
\end{align*}
Combining with the conclusions in Lemma \ref{lem:localerror}, these conclusions in this theorem are easy to prove.
\end{proof}

%{\color{red}
	%\begin{theorem}
	%For a $C^5$-smooth potential $V$, the error of the Strang splitting method at time $t_n=n\tau$ is bounded by
	%\begin{align*}
	%\Vert u^n-u(t_n) \Vert \leq C\tau^2 t_n \max_{o\leq j\leq n}\Vert u(t_j) \Vert_2,
	%\end{align*}
	%{\color{red}where $C$ depends only on ?.}
	%\end{theorem}
	%
	%\begin{proof}
	%Since...
	%\end{proof}
	%}

%As standard, for any integer number $1\leq p\leq \infty$, we denote by $L^p( \Omega)$ the Lebesgue space of complex-valued functions on $\Omega$. The Sobolev space $W^{m,p}(\Omega)$ comprises all functions with partial derivatives up to order $m\leq 1$ contained in $L^p( \Omega)$.
%In our analysis, we default to $V_p$ as the parent function of $V$.
To prove the difference between $\calS_N$ and $I_N\calS$, we first introduce the following lemmas.

\begin{lemma}
	\label{lem:defectAIN-INA}
	It holds
	\begin{align*}
		\Vert (I_N e^{-\frac{i}{2}\tau A}-e^{-\frac{i}{2}\tau A}I_N )v\Vert_{L_{QP}^2(\bbR^d)}
		\leq \int^\tau_{0} \Vert[A, I_N-\calI] e^{-\frac{i}{2}t A} v \Vert_{L_{QP}^2(\bbR^d)} \, dt.
	\end{align*}
\end{lemma}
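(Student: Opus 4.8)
The plan is to express the operator defect $I_N e^{-\frac{i}{2}\tau A}-e^{-\frac{i}{2}\tau A} I_N$ as a time integral of the derivative of an interpolated propagation path, so that the commutator $[A,I_N-\calI]$ emerges on the right-hand side while the remaining free propagators, being unitary, drop out of the $L^2_{QP}$-norm. The two structural facts I would use are: (a) by Lemma \ref{thm:selfadjoint} the operator $A=-\Delta$ is self-adjoint on $L^2_{QP}(\bbR^d)$, so by Stone's theorem (Lemma \ref{thm:stone}) $\{e^{-isA}\}_{s\in\bbR}$ is a $C_0$-group of unitaries generated by $-iA$; hence $\Vert e^{-isA}w\Vert_{L^2_{QP}(\bbR^d)}=\Vert w\Vert_{L^2_{QP}(\bbR^d)}$, and $A$ commutes with each $e^{-isA}$ on the appropriate domain; and (b) trigonometric quasiperiodic polynomials are dense in $L^2_{QP}(\bbR^d)$, and on them all operators below act within a fixed finite-dimensional subspace.

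Fix $\tau>0$. For such a polynomial $v$ I would introduce the $L^2_{QP}(\bbR^d)$-valued path
\[
	g(t)\;=\;e^{-\frac{i}{2}(\tau-t)A}\,I_N\,e^{-\frac{i}{2}tA}\,v,\qquad t\in[0,\tau],
\]
which satisfies $g(0)=e^{-\frac{i}{2}\tau A}I_N v$ and $g(\tau)=I_N e^{-\frac{i}{2}\tau A}v$, so that $\bigl(I_N e^{-\frac{i}{2}\tau A}-e^{-\frac{i}{2}\tau A}I_N\bigr)v=g(\tau)-g(0)=\int_0^\tau g'(t)\,dt$ by the fundamental theorem of calculus for the $C^1$ path $g$. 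Differentiating, and using that $A$ commutes with the two propagators,
\[
	g'(t)=\tfrac{i}{2}\,e^{-\frac{i}{2}(\tau-t)A}\,(AI_N-I_NA)\,e^{-\frac{i}{2}tA}v
	=\tfrac{i}{2}\,e^{-\frac{i}{2}(\tau-t)A}\,[A,\,I_N-\calI]\,e^{-\frac{i}{2}tA}v,
\]
where $[A,I_N]=[A,I_N-\calI]$ since $[A,\calI]=0$. Taking $L^2_{QP}(\bbR^d)$-norms inside the integral and discarding the unitary factor $e^{-\frac{i}{2}(\tau-t)A}$ gives
\[
	\Vert\bigl(I_N e^{-\frac{i}{2}\tau A}-e^{-\frac{i}{2}\tau A}I_N\bigr)v\Vert_{L^2_{QP}(\bbR^d)}
	\le\int_0^\tau\Vert g'(t)\Vert_{L^2_{QP}(\bbR^d)}\,dt
	=\tfrac12\int_0^\tau\Vert[A,I_N-\calI]\,e^{-\frac{i}{2}tA}v\Vert_{L^2_{QP}(\bbR^d)}\,dt,
\]
which is the asserted bound (in fact with a factor $1/2$ to spare). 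The general case follows by density, or simply by noting that the inequality is vacuous whenever its right-hand side is infinite.

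This argument has no serious obstacle; the only point needing care is the rigorous justification that $g\in C^1([0,\tau];L^2_{QP}(\bbR^d))$ and that termwise differentiation through $I_N$ is legitimate. On trigonometric polynomials this is immediate, since $e^{-\frac{i}{2}sA}$ acts as multiplication by $e^{-\frac{i}{2}s\Vert\blam\Vert^2}$ on each Fourier–Bohr mode — an entire function of $s$ — and $I_N$ lands in a fixed finite-dimensional space, so $g$ is an entire operator-valued function of $t$ applied to a fixed vector. For the functions actually fed into Lemma \ref{lem:defectAIN-INA} in the proof of Theorem \ref{thm:PMerror}, whose relevant parent functions lie in $H^\alpha(\bbT^n)$ with $\alpha$ large, the same manipulations are valid directly; alternatively one passes to the limit from trigonometric polynomials, both sides being continuous in the appropriate norm whenever they are finite.
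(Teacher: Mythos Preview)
Your proof is correct and essentially the same as the paper's: the paper sets up two initial value problems, writes the ODE for their difference $w-I_Nu$, and applies the variation-of-constants formula, which yields exactly your integral representation $\int_0^\tau e^{-\frac{i}{2}(\tau-t)A}[-\tfrac{i}{2}A,I_N-\calI]e^{-\frac{i}{2}tA}v\,dt$; your interpolating path $g(t)=e^{-\frac{i}{2}(\tau-t)A}I_N e^{-\frac{i}{2}tA}v$ with the fundamental theorem of calculus is just a more direct route to the same formula. Both arguments pick up the extra factor $1/2$ that the stated lemma drops.
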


\begin{proof}
	Consider the following two initial value problems
	\begin{align*}
		\begin{cases}
			\dfrac{d}{d\tau}u(\tau)
			=-\dfrac{i}{2}Au(\tau),\\
			u(0)=v,
		\end{cases}
		~\mbox{and}~~~~
		\begin{cases}
			\dfrac{d}{d\tau}w(\tau)
			=-\dfrac{i}{2}Aw(\tau),\\
			w(0)=I_Nv.
		\end{cases}
	\end{align*}
	For the initial value problem
	\begin{align}
		\frac{d}{d\tau}(w-I_Nu)(\tau)
		=-\frac{i}{2}Aw(\tau)-I_N(-\frac{i}{2}A)u(\tau),
		\label{eq:lemdefectAproof} 
	\end{align}
	with initial value $(w-I_Nu)(0)=0$, we have 
	\begin{align*}
		(w-I_Nu)(\tau)=e^{-\frac{i}{2}\tau A}I_N v-I_N e^{-\frac{i}{2}\tau A}v.
	\end{align*}
	We rewrite \eqref{eq:lemdefectAproof} as
	\begin{align*}
		\frac{d}{d\tau}(w-I_Nu)(\tau)
		=-\frac{i}{2}A(w-I_Nu)(\tau)-\frac{i}{2}AI_Nu(\tau)-I_N(-\frac{i}{2}A)u(\tau).
	\end{align*}
	Applying the linear variation-of-constants formula, we obtain
	\begin{align}
		(w-I_Nu)(\tau)
		&=e^{-\frac{i}{2}A\tau}(w-I_Nu)(0)
		+\int^{\tau}_{0}e^{-\frac{i}{2}A(t-\tau)}[-\frac{i}{2}A,I_N]
		e^{-\frac{i}{2}t A} v\,dt \notag\\
		&=\int^{\tau}_{0}e^{-\frac{i}{2}A(t-\tau)}[-\frac{i}{2}A, I_N-\calI] e^{-\frac{i}{2} t A} v\,dt.
		\label{eq:prooflinearvar}
	\end{align}
	%Due to $A$ is a self-adjoint operator, applying Stone Theorem \ref{thm:stone}, we have $\Vert e^{-\frac{i}{2}A\tau} v \Vert=\Vert v \Vert $.
	For any $v\in L^2_{QP}(\bbR^d)$, applying Lemma \ref{thm:stone}, it follows that
	\begin{align}
		\Vert e^{-\frac{i}{2}\tau A} v\Vert_{L_{QP}^2(\bbR^d)} =\Vert v\Vert_{L_{QP}^2(\bbR^d)},
		\label{eq:Anorm}	
	\end{align}
	then $(e^{-\frac{i}{2}\tau A})_{\tau\in\bbR}$ on $L_{QP}^{2}(\bbR^d)$ is a unitary group. Combining with the formula \eqref{eq:prooflinearvar}, this completes the proof.
\end{proof}

\begin{lemma}
	\label{lem:defectIN-V-W}
	Assume that there exists a constant $C_V>0$ such that $\Vert V_p\Vert_{X_\alpha}\leq C_V$.
	Then we have
	\begin{align*}
		\Vert I_N e^{-i\tau V} v\Vert_{L_{QP}^2(\bbR^d)}
		\leq e^{CC_V\tau} \Vert I_N v\Vert_{L_{QP}^2(\bbR^d)},~~v\in L_{QP}^2(\bbR^d),
	\end{align*}
	where $\tau\in [0,T]$ and $C$ is a constant.
\end{lemma}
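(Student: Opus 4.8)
The plan is to set $h(\tau):=I_N e^{-i\tau V}v$, derive an ordinary differential equation satisfied by $h$, and close it with a Gr\"onwall estimate for $\tau\mapsto\Vert h(\tau)\Vert^2_{L^2_{QP}(\bbR^d)}$. For every $\tau$ the function $h(\tau)$ is a trigonometric polynomial with exponents in $\sigma_N$, and $h(0)=I_N v$; I will tacitly take $v$ (hence $e^{-i\tau V}v$) continuous so that $I_N$ and the collocation values are well defined, which is the situation in every use of this lemma. For continuous quasiperiodic $g,w$, write $\langle g,w\rangle_N$ for the discrete inner product $\langle g_p,w_p\rangle_N$ of the parent functions on $\bbT^n_N$. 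Three elementary facts will be needed: \textbf{(i)} $I_N g$ has the same grid values as $g$, so $\langle I_N g,w\rangle_N=\langle g,w\rangle_N$; \textbf{(ii)} two exponents in $\bK_N^n$ differ componentwise by less than $2N$, so \eqref{eq:disOrth} exhibits no aliasing and hence, by Lemma \ref{lem:object} and the Parseval identity \eqref{eq:parseval}, $\langle\phi,\phi\rangle_N=\Vert\phi\Vert^2_{L^2_{QP}(\bbR^d)}$ for every $\phi\in\mathrm{span}\{e^{i\blam\cdot\bx}:\blam\in\sigma_N\}$, in particular for $\phi=h(\tau)$; and \textbf{(iii)} the pseudo-spectral product rule $I_N(\varphi\cdot I_N\varrho)=I_N(\varphi\varrho)$ for continuous $\varphi,\varrho$, valid because both sides interpolate $\varphi\varrho$ at the grid nodes.

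Next I would compute the evolution of $h$. Each discrete Fourier coefficient of $e^{-i\tau V}v$ is a finite sum of smooth functions of $\tau$, so $I_N$ commutes with $\partial_\tau$ and $h'(\tau)=-i\,I_N\big(V e^{-i\tau V}v\big)$; applying \textbf{(iii)} with $\varrho=e^{-i\tau V}v$ yields the closed equation
\begin{align*}
	h'(\tau)=-i\,I_N\big(V\,h(\tau)\big).
\end{align*}
By \textbf{(ii)}, $\Vert h(\tau)\Vert^2_{L^2_{QP}(\bbR^d)}=\langle h(\tau),h(\tau)\rangle_N$, so differentiating and using \textbf{(i)} (to discard the interpolant inside the discrete inner product),
\begin{align*}
	\frac{d}{d\tau}\Vert h(\tau)\Vert^2_{L^2_{QP}(\bbR^d)}
	=2\,\mathrm{Re}\,\langle h'(\tau),h(\tau)\rangle_N
	=2\,\mathrm{Im}\,\langle V h(\tau),h(\tau)\rangle_N .
\end{align*}
Since $\langle V h(\tau),h(\tau)\rangle_N=(4\pi N)^{-n}\sum_{\by_\bj\in\bbT^n_N}V_p(\by_\bj)\vert h_p(\by_\bj)\vert^2$, its modulus is at most $\big(\max_{\bj}\vert V_p(\by_\bj)\vert\big)\,\langle h(\tau),h(\tau)\rangle_N$, and Theorem \ref{lem:normineq} (under $\alpha\ge s/2>n/4$) gives $\max_{\bj}\vert V_p(\by_\bj)\vert\le\Vert V\Vert_{L^\infty_{QP}(\bbR^d)}\le C\Vert V_p\Vert_{X_\alpha}\le C C_V$. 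Hence
\begin{align*}
	\Big|\frac{d}{d\tau}\Vert h(\tau)\Vert^2_{L^2_{QP}(\bbR^d)}\Big|\le 2CC_V\,\Vert h(\tau)\Vert^2_{L^2_{QP}(\bbR^d)},
\end{align*}
and Gr\"onwall's inequality yields $\Vert h(\tau)\Vert^2_{L^2_{QP}(\bbR^d)}\le e^{2CC_V\tau}\Vert h(0)\Vert^2_{L^2_{QP}(\bbR^d)}$, i.e. $\Vert I_N e^{-i\tau V}v\Vert_{L^2_{QP}(\bbR^d)}\le e^{CC_V\tau}\Vert I_N v\Vert_{L^2_{QP}(\bbR^d)}$ after taking square roots.

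The step I expect to demand the most care is the bookkeeping behind \textbf{(i)}--\textbf{(iii)}: everything must be routed through the interpolation-invariance of $\langle\cdot,\cdot\rangle_N$ rather than naively identifying $\langle\cdot,\cdot\rangle_N$ with the $L^2$ inner product on products such as $V h$ (which fails because the high modes of $V$ alias), and this is exactly what keeps the constant independent of $N$ and of the dimension. A secondary point is the pointwise evaluation of $v_p$: strictly this calls for $v\in\calC^0_{QP}(\bbR^d)$ rather than merely $v\in L^2_{QP}(\bbR^d)$, or for reading the estimate as an a priori bound valid for smooth $v$, which is all that Theorem \ref{thm:PMerror} requires.
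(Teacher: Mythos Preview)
Your proof is correct and follows essentially the same approach as the paper: both derive an ODE for $I_N e^{-i\tau V}v$, bound the right-hand side via $\Vert V\Vert_{L^\infty_{QP}}\le C\Vert V_p\Vert_{X_\alpha}$ (Theorem \ref{lem:normineq}) together with the grid identity $\Vert I_N(Vu)\Vert\le\Vert V\Vert_\infty\Vert I_N u\Vert$, and close with Gr\"onwall. The only cosmetic difference is that the paper integrates the ODE and applies Gr\"onwall to $\Vert I_N u\Vert$ directly, whereas you differentiate $\Vert h\Vert^2$ through the discrete inner product; your added remarks on (i)--(iii) make explicit exactly the interpolation facts the paper uses implicitly.
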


\begin{proof}
	Consider the problem
	\begin{align}
		I_N \frac{d}{d\tau}u(\tau)=I_N(-iV)u(\tau),~~\tau\in [0,T],
		\label{eq:interODE}
	\end{align}
	with initial value $I_Nu(0)=I_Nv$. We can obtain
	\begin{align*}
		I_Nu(\tau)=I_N e^{-i\tau V} v.
	\end{align*}
	For any $v_1\in L_{QP}^\infty(\bbQ_N),~v_2\in L_{QP}^2(\bbR^d)$, assume that $v_{p_1}\in X_{\alpha}$ is the parent function of $v_1$.
	Using Theorem \ref{lem:normineq}, we have
	\begin{align*}
		\Vert I_N (v_1v_2) \Vert_{L_{QP}^2(\bbR^d)} \leq \Vert v_1 \Vert_{L_{QP}^{\infty}(\bbQ_N)}\Vert I_N v_2\Vert_{L_{QP}^2(\bbR^d)}
		\leq C\,\Vert v_{p_1} \Vert_{X_{\alpha}}\Vert I_N v_2\Vert_{L_{QP}^2(\bbR^d)}.
	\end{align*}
	Integrating \eqref{eq:interODE} from $0$ to $\tau$ yields 
	\begin{align*}
		\Vert I_Nu\Vert_{L_{QP}^2(\bbR^d)}
		&\leq \Vert I_N v \Vert_{L_{QP}^2(\bbR^d)}
		+\int^\tau_{0} \Vert I_NVu(t)\Vert_{L_{QP}^2(\bbR^d)}\,dt\\
		&\leq \Vert I_Nv \Vert_{L_{QP}^2(\bbR^d)}
		+C\Vert V_p \Vert_{X_{\alpha}}\int^\tau_{0} \Vert I_Nu\Vert_{L_{QP}^2(\bbR^d)}\,dt\\
		&\leq \Vert I_N v \Vert_{L_{QP}^2(\bbR^d)}
		+CC_V\int^\tau_{0} \Vert I_Nu\Vert_{L_{QP}^2(\bbR^d)}\,dt.
	\end{align*}
	Then we can prove the theorem by applying the Gronwall inequality.
\end{proof}

The error analysis of PM in the sense of $L^2_{QP}$-norm is as follows.
\begin{lemma}[\cite{Jiang2022Numerical}]
\label{thm:pm error-2}
Suppose that $u(\bx)\in \QP(\bbR^d)$ and its parent function $u_p(\by)\in H^{\alpha}(\mathbb{T}^n)$ with $\alpha > n/2$. There exists a constant $C$, independent of $u_p$ and $N$, such that
	\begin{align*}
		\Vert I_N u-u\Vert_{L_{QP}^2(\bbR^d)} \leq CN^{-\alpha}\vert u_p \vert_\alpha.
	\end{align*}
\end{lemma}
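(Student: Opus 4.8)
The plan is to reduce the quasiperiodic interpolation error to the well-understood periodic (torus) trigonometric interpolation error via Lemma \ref{lem:object}, and then invoke the classical $H^\alpha(\bbT^n)$ aliasing estimate. Concretely, since $u(\bx)=u_p(\bP^T\bx)$ and $I_N u(\bx)=\sum_{\blam\in\sigma_N}\tu_\blam e^{i\blam\cdot\bx}$ with $\tu_\blam=\tU_\bk$ the \emph{discrete} Fourier coefficient of $U=u_p$ on $\bbT^n_N$ (by the very definition of the PM in \eqref{eq:disFourierCoeff} and \eqref{eq:tri-quasi}), the trigonometric interpolant $I_N u$ is, after unfolding through $\bP$, exactly the periodic interpolant $I_N u_p$ of the parent function on $\bbT^n_N$. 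Then by the Parseval identity \eqref{eq:parseval} for quasiperiodic functions together with Lemma \ref{lem:object}, I would show
\begin{align*}
\Vert I_N u-u\Vert_{L^2_{QP}(\bbR^d)}^2=\sum_{\bk\in\bbZ^n}\big\vert(\widetilde{I_N u_p})(\bk)-\hv_p(\bk)\big\vert^2=\Vert I_N u_p-u_p\Vert_{L^2(\bbT^n)}^2,
\end{align*}
so the whole problem collapses onto the torus.

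The second step is the standard aliasing analysis on $\bbT^n_N$. Split $\bbZ^n=\bK_N^n\cup(\bbZ^n\setminus\bK_N^n)$. For $\bk\notin\bK_N^n$ the contribution to the error is just the tail $\sum_{\bk\notin\bK_N^n}\vert\hv_p(\bk)\vert^2$, which is bounded by $N^{-2\alpha}\vert u_p\vert_\alpha^2$ because $\vert\bk\vert\ge N$ there (using the semi-norm $\vert u_p\vert_\alpha^2=\sum_\bk\vert\bk\vert^{2\alpha}\vert\hv_p(\bk)\vert^2$). For $\bk\in\bK_N^n$ the discrete orthogonality \eqref{eq:disOrth} gives the aliasing formula $\tU_\bk=\sum_{\bmm\in\bbZ^n}\hv_p(\bk+2N\bmm)$, so $\tU_\bk-\hv_p(\bk)=\sum_{\bmm\neq\bo}\hv_p(\bk+2N\bmm)$; summing over $\bk\in\bK_N^n$ and applying Cauchy--Schwarz with the weight $(1+\vert\bk+2N\bmm\vert^2)^{\alpha}$, as in the classical proof (requiring $\alpha>n/2$ so that $\sum_{\bmm\neq\bo}(1+\vert 2N\bmm\vert^2)^{-\alpha}$ converges and is $O(N^{-2\alpha})$), yields the same $N^{-2\alpha}\vert u_p\vert_\alpha^2$ bound. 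Combining the two pieces and taking square roots gives $\Vert I_N u-u\Vert_{L^2_{QP}(\bbR^d)}\le CN^{-\alpha}\vert u_p\vert_\alpha$ with $C$ depending only on $\alpha$ and $n$, hence independent of $u_p$ and $N$ as claimed.

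The main obstacle — really the only delicate point — is justifying the first reduction rigorously: one must check that the PM's discrete Fourier-Bohr coefficients $\tu_\blam$ genuinely coincide with the discrete torus coefficients $\tU_\bk$ of the parent function (this is built into the construction but should be stated), and, more subtly, that the collocation points $\bx_\bj=\bP\by_\bj$ and the $\bbQ$-linear independence of the columns of $\bP$ make the identification $u(\bx_\bj)=u_p(\by_\bj)$ well-defined on the grid so that the interpolation conditions transfer faithfully. Once that bookkeeping is in place, the estimate is purely the classical periodic result, which is exactly why the cited reference \cite{Jiang2022Numerical} can be invoked; in the present excerpt I would simply carry out the reduction explicitly and quote the torus aliasing bound. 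Everything else is routine.
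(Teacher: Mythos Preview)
The paper does not prove this lemma; it merely states it and cites \cite{Jiang2022Numerical} as the source. Your proposal supplies a full proof where the paper gives none, and your approach---reducing $\Vert I_N u-u\Vert_{L^2_{QP}(\bbR^d)}$ to $\Vert I_N u_p-u_p\Vert_{L^2(\bbT^n)}$ via Parseval \eqref{eq:parseval} and Lemma~\ref{lem:object}, then applying the classical aliasing estimate on $\bbT^n_N$ using the discrete orthogonality \eqref{eq:disOrth}---is correct and is almost certainly the argument in the cited reference, since it is the natural route once the PM is defined through the parent function.

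One small comment on your bookkeeping worry: the identification $\tu_\blam=\tU_\bk$ is not something you need to derive from the collocation condition $u(\bx_\bj)=u_p(\by_\bj)$; it is the \emph{definition} of the PM coefficients (see the sentence just before \eqref{eq:tri-quasi}: ``The PM designates $\tU_{\bk}$ as the Fourier-Bohr coefficient $\tu_{\blam}$''). So the reduction to the torus is immediate by construction, and the $\bbQ$-linear independence of the columns of $\bP$ is only needed to ensure the map $\bk\mapsto\blam=\bP\bk$ is injective so that Parseval transfers cleanly. With that clarification your argument is complete.
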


Now, we analyze the difference between $\calS_N$ and $I_N\calS$.
Let $u_p$, $v_p$ and $w_p$ be the parent functions of quasiperiodic functions $u$, $v$ and $w$, respectively.
\begin{thm}
For $u\in \QP(\bbR^d)$, $v =\calS u$ and $w= e^{-\frac{i}{2}\tau A}u$.
Assume that $\Vert V_p \Vert_{X_\alpha} \leq C_V$ and $u_p,~v_p,~w_p\in H^{\alpha}(\mathbb{T}^n)$ with $\alpha\geq s/2> n/4$.
Then we have 
\begin{align*}
\Vert \calS_N(u)-I_N\calS(u)\Vert_{L_{QP}^2(\bbR^d)} \leq C\tau N^{-\alpha} 
(\vert v_p\vert_\alpha+\vert w_p\vert_\alpha),
\end{align*}
where $C$ is a constant.
\end{thm}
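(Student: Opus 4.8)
The plan is to rewrite both operators into a form in which only one elementary ``defect'' survives, namely the failure of the free propagator $e^{-\frac{i}{2}\tau A}$ to commute with the trigonometric interpolation $I_N$, and then to bound that defect by a Fourier-side/aliasing argument. First I would simplify $\calS_N$: since $I_N g$ is a trigonometric polynomial with Fourier exponents in $\sigma_N$, and $e^{-\frac{i}{2}\tau A}$ is a Fourier multiplier that preserves this support while $I_N$ is the identity on such polynomials, one has the PM analogue of the noted QSM identity, $I_N e^{-\frac{i}{2}\tau A}I_N=e^{-\frac{i}{2}\tau A}I_N$, hence $\calS_N u=e^{-\frac{i}{2}\tau A}I_N e^{-i\tau V}\,e^{-\frac{i}{2}\tau A}\,I_N u$. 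On the other hand $I_N\calS u=I_N v$ with $v=\calS u=e^{-\frac{i}{2}\tau A}e^{-i\tau V}w$ and $w=e^{-\frac{i}{2}\tau A}u$. Adding and subtracting $e^{-\frac{i}{2}\tau A}I_N e^{-i\tau V}w$ and using $e^{-\frac{i}{2}\tau A}u=w$ yields
\begin{align*}
\calS_N u - I_N\calS u = \underbrace{e^{-\frac{i}{2}\tau A}I_N e^{-i\tau V}\big(e^{-\frac{i}{2}\tau A}I_N u - w\big)}_{E_1} + \underbrace{\big(e^{-\frac{i}{2}\tau A}I_N - I_N e^{-\frac{i}{2}\tau A}\big)\big(e^{-i\tau V}w\big)}_{E_2}.
\end{align*}

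Next I would reduce $E_1$ to the same shape as $E_2$. Since $e^{-\frac{i}{2}\tau A}$ is unitary on $L_{QP}^2(\bbR^d)$ (Lemma \ref{thm:selfadjoint} and Lemma \ref{thm:stone}), $\Vert E_1\Vert=\Vert I_N e^{-i\tau V}(e^{-\frac{i}{2}\tau A}I_N u-w)\Vert$; applying Lemma \ref{lem:defectIN-V-W} to this argument and then using $I_N e^{-\frac{i}{2}\tau A}I_N=e^{-\frac{i}{2}\tau A}I_N$ and $e^{-\frac{i}{2}\tau A}u=w$ once more gives $\Vert E_1\Vert\le e^{CC_V\tau}\Vert(e^{-\frac{i}{2}\tau A}I_N-I_N e^{-\frac{i}{2}\tau A})u\Vert$. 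Thus everything rests on a single estimate of the form $\Vert(e^{-\frac{i}{2}\tau A}I_N-I_N e^{-\frac{i}{2}\tau A})g\Vert_{L_{QP}^2(\bbR^d)}\le C\tau N^{-\alpha}\vert g_p\vert_\alpha$, applied to $g=u$ (for $E_1$) and to $g=e^{-i\tau V}w$ (for $E_2$). The bookkeeping then closes the argument: $w=e^{-\frac{i}{2}\tau A}u$ multiplies each Fourier coefficient by a unimodular factor, so $\vert u_p\vert_\alpha=\vert w_p\vert_\alpha$; and $e^{-i\tau V}w=e^{\frac{i}{2}\tau A}v$, whose parent function again has coefficients of the same modulus as $v_p$, so $\vert(e^{-i\tau V}w)_p\vert_\alpha=\vert v_p\vert_\alpha$. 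Summing the two contributions produces $C\tau N^{-\alpha}(\vert v_p\vert_\alpha+\vert w_p\vert_\alpha)$ with $C$ absorbing $e^{CC_V T}$.

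The remaining task — proving the defect estimate — is the crux. I would use Lemma \ref{lem:defectAIN-INA}, which bounds the defect by $\int_0^\tau\Vert[A,I_N-\calI]e^{-\frac{i}{2}tA}g\Vert\,dt$; the integral over $[0,\tau]$ is precisely what supplies the factor $\tau$ (reflecting that the defect vanishes at $\tau=0$), and since $e^{-\frac{i}{2}tA}$ is a unimodular Fourier multiplier it leaves the relevant seminorms of the argument unchanged. It then remains to show $\Vert[A,I_N-\calI]h\Vert\le CN^{-\alpha}\vert h_p\vert_\alpha$. Writing $[A,I_N-\calI]h=A(I_N-\calI)h-(I_N-\calI)Ah$, the second term is an interpolation error of $Ah=-\Delta h$ and is handled by Lemma \ref{thm:pm error-2}; for the first term one passes to the Fourier side of the parent function and splits $(I_N-\calI)h$ into its genuinely high-frequency part, where $A$ contributes a factor $\Vert\bP\bk\Vert^{2}$ against exponents $\Vert\bP\bk\Vert\gtrsim N$, and its aliased low-frequency part, where $A$ contributes at most $CN^{2}$ against an aliasing tail of the appropriate small order; in both cases the smoothness of $h_p$ converts the $\Vert\bP\bk\Vert^{2}$ into the loss of two powers, leaving the full rate $N^{-\alpha}$. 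The delicate point — and the place where the regularity of the parent functions must be tracked most carefully — is exactly this simultaneous recovery of the time factor $\tau$ and of the optimal spatial rate $N^{-\alpha}$ despite the two derivatives that the operator $A=-\Delta$ costs in the commutator; this is the main obstacle, and once it is surmounted the rest of the proof is routine assembly.
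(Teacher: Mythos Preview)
Your approach is correct and matches the paper's: the paper telescopes $I_N\calS u-\calS_N u$ into three pieces $Z_1,Z_2,Z_3$ (your preliminary identity $I_Ne^{-\frac{i}{2}\tau A}I_N=e^{-\frac{i}{2}\tau A}I_N$ is exactly why their $Z_2=0$, so your two-term split $E_1+E_2$ is the same decomposition with the trivial piece removed), reduces the surviving pieces via Lemmas~\ref{lem:defectAIN-INA} and~\ref{lem:defectIN-V-W} to the free-propagator/interpolation defect, and then bounds the commutator $[A,I_N-\calI]$ by appealing to the PM approximation estimate Lemma~\ref{thm:pm error-2}. Your discussion of that last commutator bound is in fact more explicit than the paper's, which simply cites Lemma~\ref{thm:pm error-2} at that step without isolating the $A(I_N-\calI)$ versus $(I_N-\calI)A$ split or the aliasing mechanism.
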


\begin{proof}
	Since
	\begin{align*}
		I_N\calS(u)-\calS_N(u)
		&=I_N e^{-\frac{i}{2}\tau A} e^{-i\tau V} e^{-\frac{i}{2}\tau A}u
		-e^{-\frac{i}{2}\tau A}I_N e^{-i\tau V}I_N e^{-\frac{i}{2}\tau A} I_Nu\\
		&=I_N e^{-\frac{i}{2}\tau A} e^{-i\tau V} e^{-\frac{i}{2}\tau A}u-
		e^{-\frac{i}{2}\tau A} I_N e^{-i\tau V} e^{-\frac{i}{2}\tau A}u\\
		&~~+e^{-\frac{i}{2}\tau A} I_N e^{-i\tau V} e^{-\frac{i}{2}\tau A}u-
		e^{-\frac{i}{2}\tau A} I_N e^{-i\tau V} I_N e^{-\frac{i}{2}\tau A}u\\
		&~~+e^{-\frac{i}{2}\tau A} I_N e^{-i\tau V} I_N e^{-\frac{i}{2}\tau A}u
		-e^{-\frac{i}{2}\tau A}I_N e^{-i\tau V}I_N e^{-\frac{i}{2}\tau A} I_Nu\\
		&=Z_1+Z_2+Z_3.
	\end{align*}
	Then we estimate the bounds of $\Vert Z_1 \Vert_{L_{QP}^2(\bbR^d)} $, $\Vert  Z_2 \Vert_{L_{QP}^2(\bbR^d)} $ and $\Vert Z_3 \Vert_{L_{QP}^2(\bbR^d)} $, respectively.
	
	(i) For $Z_1$ and using Lemma \ref{lem:defectAIN-INA}, we have
	\begin{align*}
		\Vert Z_1 \Vert_{L_{QP}^2(\bbR^d)}
		&=\Vert (I_N e^{-\frac{i}{2}\tau A}-e^{-\frac{i}{2}\tau A} I_N)
		e^{-i\tau V} e^{-\frac{i}{2}\tau A}u\Vert_{L_{QP}^2(\bbR^d)} \\
		&\leq \int^\tau_{0}  \Vert [A (I_N-\calI)- (I_N-\calI)A]
		e^{-\frac{i}{2} t A}e^{-i t V} e^{-\frac{i}{2} t A}u \Vert_{L_{QP}^2(\bbR^d)}\,dt \\
		&=\int^\tau_{0} \Vert [A (I_N-\calI)- (I_N-\calI)A]v \Vert_{L_{QP}^2(\bbR^d)}\,dt
	\end{align*}
	where $v=\calS u$.
	Since $\vert v_p \vert_\alpha< +\infty$, applying Lemma \ref{thm:pm error-2}, it follows that
	\begin{align*}
		\Vert Z_1 \Vert_{L_{QP}^2(\bbR^d)}\leq C \tau N^{-\alpha} \vert v_p \vert_\alpha.
	\end{align*}

	(ii)  For $Z_2$, using \eqref{eq:Anorm} and Lemma \ref{lem:defectIN-V-W}, we have
	\begin{align*}
		\Vert Z_2 \Vert_{L_{QP}^2(\bbR^d)}
		&=\Vert e^{-\frac{i}{2}\tau A} I_N e^{-i\tau V}( e^{-\frac{i}{2}\tau A}u
		-I_N e^{-\frac{i}{2}\tau A} u)\Vert_{L_{QP}^2(\bbR^d)}\\
		&= \Vert I_N e^{-i\tau V} (e^{-\frac{i}{2}\tau A}u
		-I_N e^{-\frac{i}{2}\tau A} u)\Vert_{L_{QP}^2(\bbR^d)}\\
		& \leq e^{CC_V\tau} \Vert I_N (e^{-\frac{i}{2}\tau A}u
		-I_N e^{-\frac{i}{2}\tau A} u)\Vert_{L_{QP}^2(\bbR^d)}=0.
	\end{align*}

	(iii) For $Z_3$, using \eqref{eq:Anorm}, Lemma \ref{lem:defectAIN-INA} and Lemma \ref{lem:defectIN-V-W}, we have
	\begin{align*}
		\Vert Z_3 \Vert_{L_{QP}^2(\bbR^d)}
		&=\Vert e^{-\frac{i}{2}\tau A} I_N e^{-i\tau V} (I_N e^{-\frac{i}{2}\tau A}u
		- I_N e^{-\frac{i}{2}\tau A}I_N u) \Vert_{L_{QP}^2(\bbR^d)}\\
		&\leq e^{CC_V\tau}\Vert I_N (I_N e^{-\frac{i}{2}\tau A}u
		- I_N e^{-\frac{i}{2}\tau A}I_N u) \Vert_{L_{QP}^2(\bbR^d)}\\
		&=e^{CC_V\tau}\Vert (I_N e^{-\frac{i}{2}\tau A}
		-e^{-\frac{i}{2}\tau A}I_N) u \Vert_{L_{QP}^2(\bbR^d)}\\
		& \leq  e^{CC_V\tau}
		\int^\tau_{0} \Vert [A, I_N-\calI] e^{-\frac{i}{2} t A} u \Vert_{L_{QP}^2(\bbR^d)}\, dt  \\
		&\leq C \tau N^{-\alpha} \vert w_p  \vert_\alpha.
	\end{align*}
\end{proof}

% \begin{lemma}
	% \label{lem:boundV}
	% Let $v\in L_{QP}^2(\bbR^d)$. Assume that there exists a constant $C_V>0$ such that $\Vert V_p\Vert_{X_\alpha}\leq C_V$. Then we have
	% 	\begin{align*}
		% 		\Vert e^{-\frac{i}{2}\tau V} v\Vert
		% 		\leq e^{CC_V\tau} \Vert v\Vert,~~\tau\in [0,T],
		% 	\end{align*}
	% where $C$ is a constant.
	% \end{lemma}

% \begin{proof}
	% 	In order to deduce the bound for $u(\tau)=e^{-\frac{i}{2}\tau V}v$, we consider the initial value problem
	% 	\begin{align*}
		% 		\frac{d}{d\tau}u(\tau)=-\frac{i}{2}Vu(\tau),~~\tau\in [0,T],~~u(0)=v.
		% 	\end{align*}
	% 	Integration and applying Theorem \ref{lem:normineq}, we have
	% 	\begin{align*}
		% 		u(\tau)=v-\frac{i}{2}\int^{\tau}_{0}V u(t)\,dt
		% 	\end{align*}
	% 	and
	% 	\begin{align*}
		% 		\Vert u(\tau)\Vert 
		% 		\leq \Vert v\Vert +C \Vert V_p \Vert_{X_\alpha} \int^{\tau}_{0}\Vert u(t)\Vert\,dt
		% 		\leq \Vert v\Vert +C C_V \int^{\tau}_{0}\Vert u(t)\Vert\,dt.
		% 	\end{align*}
	% 	According to the Gronwall-type inequality, we can obtain
	% 	\begin{align*}
		% 		\Vert u(\tau)\Vert\leq e^{C C_V\tau}\Vert v\Vert, ~~\tau\in [0,T],
		% 	\end{align*}
	% 	which implies the stated result.
	% \end{proof}

\subsubsection{The proof of Theorem \ref{thm:PMerror}}

Using the triangle inequality, we have
	\begin{align*}
		\Vert u^m_N- u(\cdot,t_m)\Vert_{L_{QP}^2(\bbR^d)}\leq \Vert u^m_N- u^m\Vert_{L_{QP}^2(\bbR^d)}+ \Vert u^m- u(\cdot, t_m)\Vert_{L_{QP}^2(\bbR^d)},
	\end{align*}
	where $u^m$ is the splitting solution of the scheme \eqref{eq:splitting} at $t=t_m$ with $u^0=u_0$.
	We now apply the Lady Windermere's fan to represent the global errors as follows
	\begin{align*}
		&u^m- u(\cdot, t_m)=\calS^m u_0-\calT^m u_0=\sum^{m-1}_{j=0} \calS^{m-j-1}(\calS-\calT)\calT^{j}u_0,\\
		&u^m_N- u^m=\calS_N^m u_0-\calS^m u_0=(I_N-\calI)u^m+\sum^{m}_{j=1} \calS_N^{m-j}(\calS_N(u^{j-1})-I_N\calS(u^{j-1})).
	\end{align*}
	Applying Theorem \ref{thm:timeerror}, we have
	\begin{align*}
		\Vert u^m- u(\cdot, t_m)\Vert_{L_{QP}^2(\bbR^d)}
		&=\Big \Vert \sum^{m-1}_{j=0} \calS^{m-j-1}(\calS-\calT)\calT^{j}u_0\Big \Vert_{L_{QP}^2(\bbR^d)}\\
		&\leq \sum^{m-1}_{j=0}\Vert \calS-\calT\Vert_{L_{QP}^2(\bbR^d)} \cdot \Vert \calT^{j}u_0 \Vert_{L_{QP}^2(\bbR^d)}\\
		&\leq C\tau^2 \sup_{0\leq t\leq T}\Vert u(\cdot,t) \Vert_{L_{QP}^2(\bbR^d)}.
		%	&= C(T) \tau^2 \max_{0\leq j\leq m-1}\Vert u(\cdot,t_j) \Vert,
	\end{align*}
	%	where $T=m\tau$ and $C(T)$ is a constant that depends on $T$.
	Considering the regularity of the solution, then
	\begin{align*}
		\Vert u^m_N- u^m \Vert_{L_{QP}^2(\bbR^d)}
		&\leq\Vert (I_N-\calI)u^m\Vert_{L_{QP}^2(\bbR^d)}
		+ \Big \Vert \sum^{m}_{j=1} \calS_N^{m-j}(\calS_N(u^{j-1})-I_N\calS(u^{j-1})) \Big \Vert_{L_{QP}^2(\bbR^d)} \\
		&\leq CN^{-\alpha} \vert u_p^m\vert_{\alpha} +
		\sum^{m}_{j=1} e^{CC_V(m-j)\tau} \Vert \calS_N(u^{j-1})-I_N\calS(u^{j-1})\Vert_{L_{QP}^2(\bbR^d)}\\
		&\leq CN^{-\alpha} \vert u_p^m\vert_{\alpha}
		+ C m\tau N^{-\alpha}\max_{0\leq j\leq m-1} \vert u_p^{j}\vert_{\alpha}\\
		& \leq CN^{-\alpha} \max_{0\leq j\leq m} \vert u_p^{j}\vert_{\alpha}.
	\end{align*}
	Therefore, the desired result can be directly obtained from the above two estimates. 

%The truncation error analysis of QSM is given in \cite{Jiang2022Numerical}. 

The above analysis framework of the PM-OS2 is also applicable to the QSM-OS2, also showing exponential convergence in space and second-order accuracy in time.
Besides, since QSM is essentially a generalization of Fourier spectral method, we can offer another way to analyze QSM. Concretely, similar to the analytical framework of Fourier spectral method to solve the Schr\"{o}dinger equation with periodic potentials \cite{Jahnke2000error}, we give the error analysis of QSM-OS2 based on the embedding theorem of quasiperiodic function space, see Appendix \ref{Appendix:analysisQSM-OS2} for details.

% \begin{remark}
	% For TQSE \eqref{eq:QSE} with an non-zero external field function $f$, the convergence analysis of the homogeneous TQSE \eqref{eq:QSE-re} in this subsection can be easily extended, and similar convergence results can be obtained.
	% \end{remark}

%\begin{remark}
%For the quasiperiodic Schr\"{o}dinger equation with an external field function $f$, %{\color{red}[?]}, 
%\textit{i.e.,}
%\begin{align*}
%i\frac{\partial u}{\partial t}=-\Delta u+Vu+f,
%\end{align*}
%the convergence analysis in this subsection is easy to be generalized and similar convergence conclusions are obtained.
%\end{remark}

\section{Numerical Results}
\label{sec:Num}

In this section, we provide some numerical experiments to verify the accuracy and performance of QSM-OS2 and PM-OS2 for solving TQSE \eqref{eq:QSE}. In the following numerical results, we only show the quasiperiodic structure in finite domain.
It also should be emphasized that the developed methods have obtained global quasiperiodic solution over $\bbR^d$.
All algorithms are implemented using MSVC++ 14.29 on Visual Studio Community 2019. The software of FFTW 3.3.5 is used to compute FFT \cite{frigo2005design}. All computations are performed on a workstation with an Intel Core 2.30GHz CPU, 16GB RAM. Let CPU(s) denote the  computational time in seconds. For QSM-OS2, let $k=5$ in \eqref{eq:taylorexp}. 

%In the following example, we verify the convergence rate in space and time directions. 
We use $L_{QP}^2$-norm to measure the numerical error 
\begin{align*}
	\Err^\tau_{h}=\Vert u^m_N- u(\cdot,t_m)\Vert_{L_{QP}^2(\bbR^d)},
\end{align*}
where $\tau$ is the time step size and $h$ is the mesh size of the $n$-dimensional torus.
The error order in time direction is calculated by 
\begin{align*}
	\Ord=\frac{\ln(\Err^{\tau_1}_{h}/\Err^{\tau_2}_{h})}{\ln(\tau_{1}/\tau_{2})}.
\end{align*}
%where $\Err^{\tau_i}_{h}$ ($i=1,2$) is the numerical error with step $\tau_i$.
Our numerical examples focus on verifying the accuracy in space direction and the error order in time direction. As a result, the final time $T$ can be arbitrarily chosen. For simplicity, we choose $T=0.001$.

\subsection{One-dimensional case}

Consider one dimensional TQSE \eqref{eq:QSE} with the incommensurate potential
\begin{align*}
	V(x)=2(\cos x+\cos\sqrt{3}x),	
\end{align*}
and initial value 
\begin{align*}
	u_0(x)=\sum_{\lambda\in \sigma(u_0)}\hat{u}_{\lambda}e^{i\lambda x},
\end{align*}
where $\hat{u}_{\lambda}=e^{-(\vert m_1\vert +\vert m_2\vert)}$ and $\sigma(u_0)=\{\lambda = m_1+m_2\sqrt{3}: m_1,m_2\in \bbZ, -32\leq m_1,m_2\leq 31\}$.

The reference solution on $\bbR$ is obtained by using PM-OS2 with the time step size $\tau=1\times 10^{-8}$ and the mesh size of the two-dimensional torus $h=\pi/128$ in each direction.
Figure \ref{fig:PMreal1d} only shows the distribution of the probability density function $\vert u(x,T) \vert^2$ in $x\in[0,240\pi]$ and corresponding diffraction pattern of the reference solution, respectively. 
In the right plot of Figure \ref{fig:PMreal1d}, we can see that the dots of different colors are constantly alternating and the distribution Fourier exponent has rotational symmetry but no translational symmetry.

%Numerical results are presented in Table \ref{tab:error} and Table \ref{tab:timeerror}, demonstrating the exponential convergence rates in space, and the second-order accuracy of OS2 as proved in Section \ref{sec:converanaly}, respectively.

\begin{figure}[!htbp]
\centering
\subfigure{
\includegraphics[width=2in]{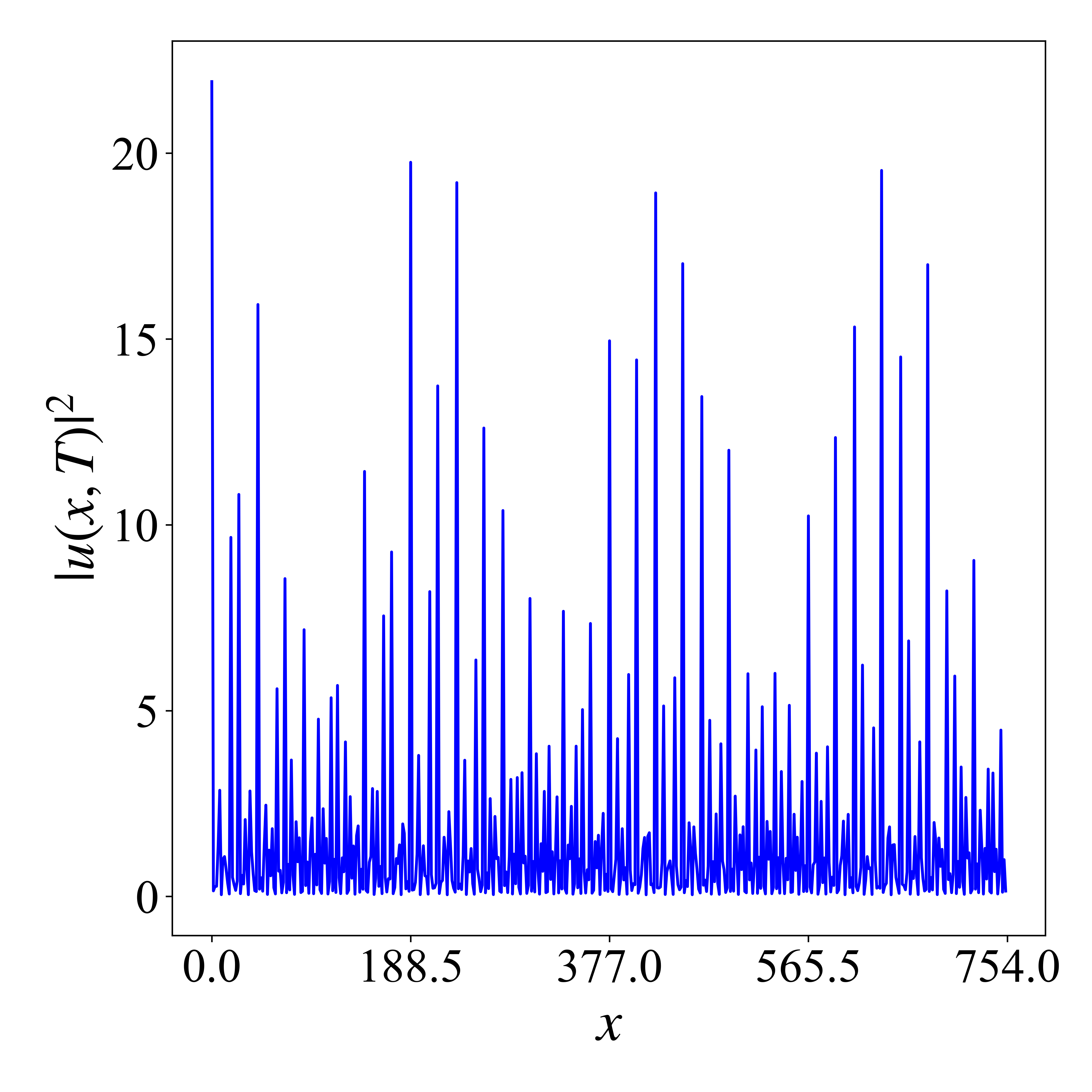}}
\hspace{0.01in}
\subfigure{
\includegraphics[width=2in]{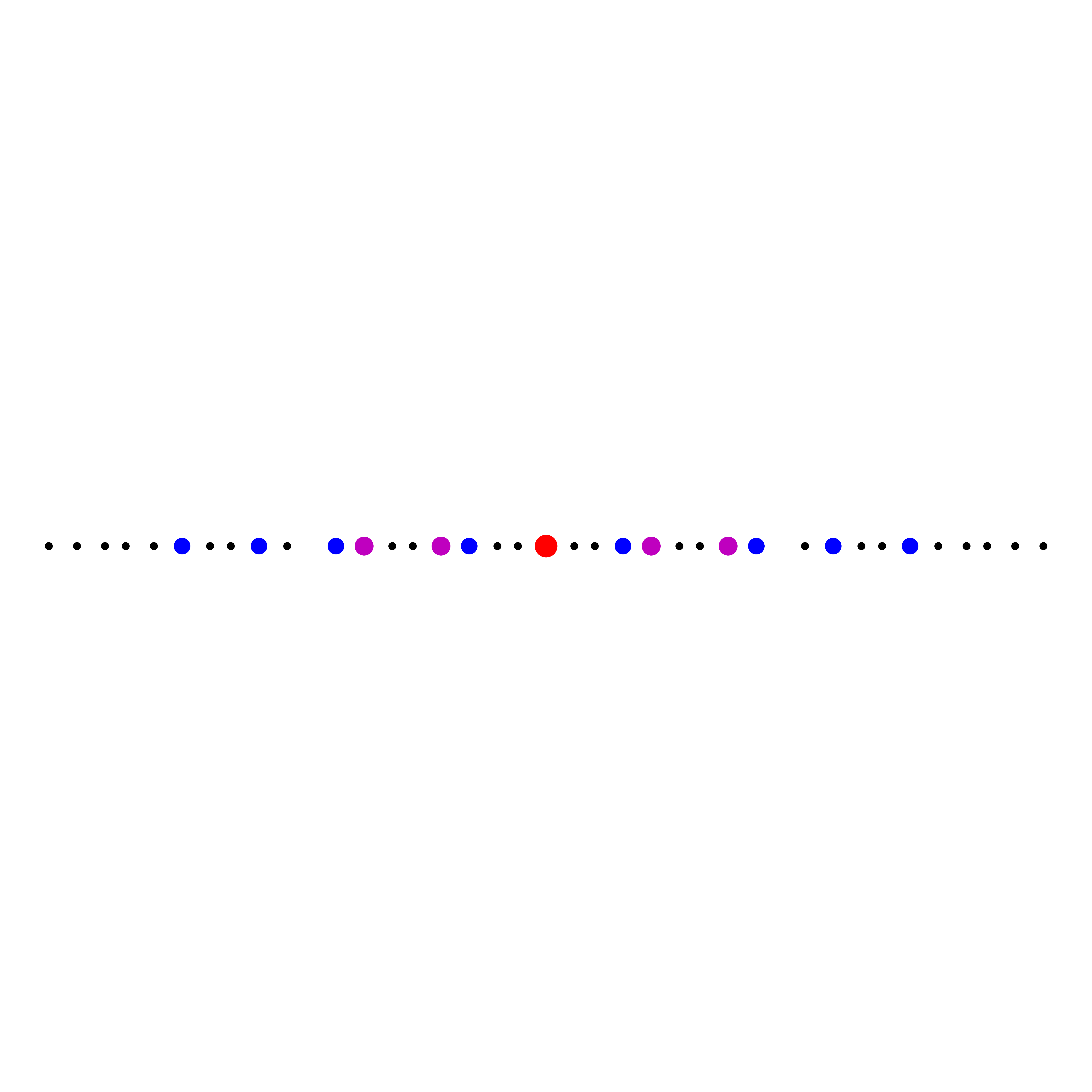}
\label{subfig:1dreciprocal}}
\caption{
The reference solution of one-dimensional TQSE. Left: Probability density distribution $\vert u(x,T) \vert^2$, $x\in[0,240\pi]$. Right: Distribution of Fourier exponents $\bf\lambda$ when $\vert \hat{u}_{\bf\lambda}\vert >8.620\mbox{e-}02$. We arrange the values of $\vert \hat{u}_{\bf\lambda}\vert$ from largest to smallest, then red, purple and blue dots represent the order of $\vert \hat{u}_{\bf\lambda}\vert$ at the corresponding Fourier exponents from the largest to the smallest, respectively.} \label{fig:PMreal1d}
\end{figure}

We first show the convergence rate when solving TQSE using PM and QSM in space direction. Set the time step size $\tau=1\times 10^{-6}$,
Table \ref{tab:error} shows numerical error $\Err^\tau_{h}$ and the required CPU time of PM-OS2 and QSM-OS2, respectively. We find that $\Err^\tau_{h}$ decays exponentially as $N$ increases due to the smoothness of wave function $u(x,t)$. Meanwhile, the CPU time of PM is less than that of QSM since PM can use 2D FFT. Figure \ref{fig:CPU_compare} further details CPU time of both methods as $N$ increases. 

%Meanwhile, the aliasing error in PM is always too small to be almost negligible.

%This is consistent with the convergence analysis given in Section \ref{sec:converanaly}.
%\begin{table}[!hptb]
%	\vspace{-0.2cm}
%	\centering
%	\footnotesize{
	%		\caption{Numerical error $\Err^\tau_{h}$ and CPU time of PM and QSM, and aliasing error $\Vert R_N\psi\Vert$ of PM for different $N$.}
	%		\vspace{0.1cm}
	%		\label{tab:error}
	%		\vspace{0.002cm}
	%		\renewcommand\arraystretch{1.3}
	%		\setlength{\tabcolsep}{3mm}
	%		{\begin{tabular}{|c|c|c|c|c|c|}\hline
			%		&		$2N \times 2N$& $4\times 4$   & $8\times 8$ &$16\times 16$  &$32\times 32$    \\ \hline
			%	\multirow{3}*{$\Err^\tau_{h}$ } &PM   &  5.091e-04    &  1.696e-05   &   1.137e-08  &   2.086e-11  \\ \cline{2-6}
			%		&		QSM            & 5.091e-04       &  1.696e-05          & 1.137e-08         &2.092e-11       \\ \cline{2-6}
			%	    &   $\Vert R_N\psi\Vert$& 1.415e-13       &  1.193e-13          & 9.609e-14         & 1.183e-13            \\ \hline
			%		\end{tabular}}
	%	}
%\end{table}

\begin{table}[!hptb]
	\vspace{-0.2cm}
	\centering
\caption{Numerical error $\Err^\tau_{h}$ and CPU time of PM-OS2 and QSM-OS2 for different $N$ and $\tau=1\times 10^{-6}$.}
\vspace{0.2cm}
\label{tab:error}
\renewcommand\arraystretch{1.6}
\setlength{\tabcolsep}{2.8mm}
{\begin{tabular}{|c|c|c|c|c|c|c|}\hline
&$N \times N$ &$2\times 2$  & $4\times 4$   & $8\times 8$ &$16\times 16$  &$32\times 32$    \\ \hline
\multirow{2}*{$\Err^\tau_{h}$ } &PM-OS2    &  2.784e-03  &  5.091e-04    &  1.696e-05   &   1.137e-08  &  2.488e-12  \\ \cline{2-7}
&QSM-OS2    & 3.335e-03     & 5.430e-04       &  1.748e-05 & 1.153e-08   & 2.485e-12     \\ \hline
%\multirow{2}*{CPU(s)} & PM-OS2 &0.042      &    0.062  &  0.112     & 0.426    &1.630   \\ \cline{2-7}
 %                     &  QSM-OS2&0.074  &  0.300   &  2.878     &55.264    &2002.697  \\ \hline
\end{tabular}}
\end{table}

\begin{figure}[!htbp]
\centering
\includegraphics[width=2in]{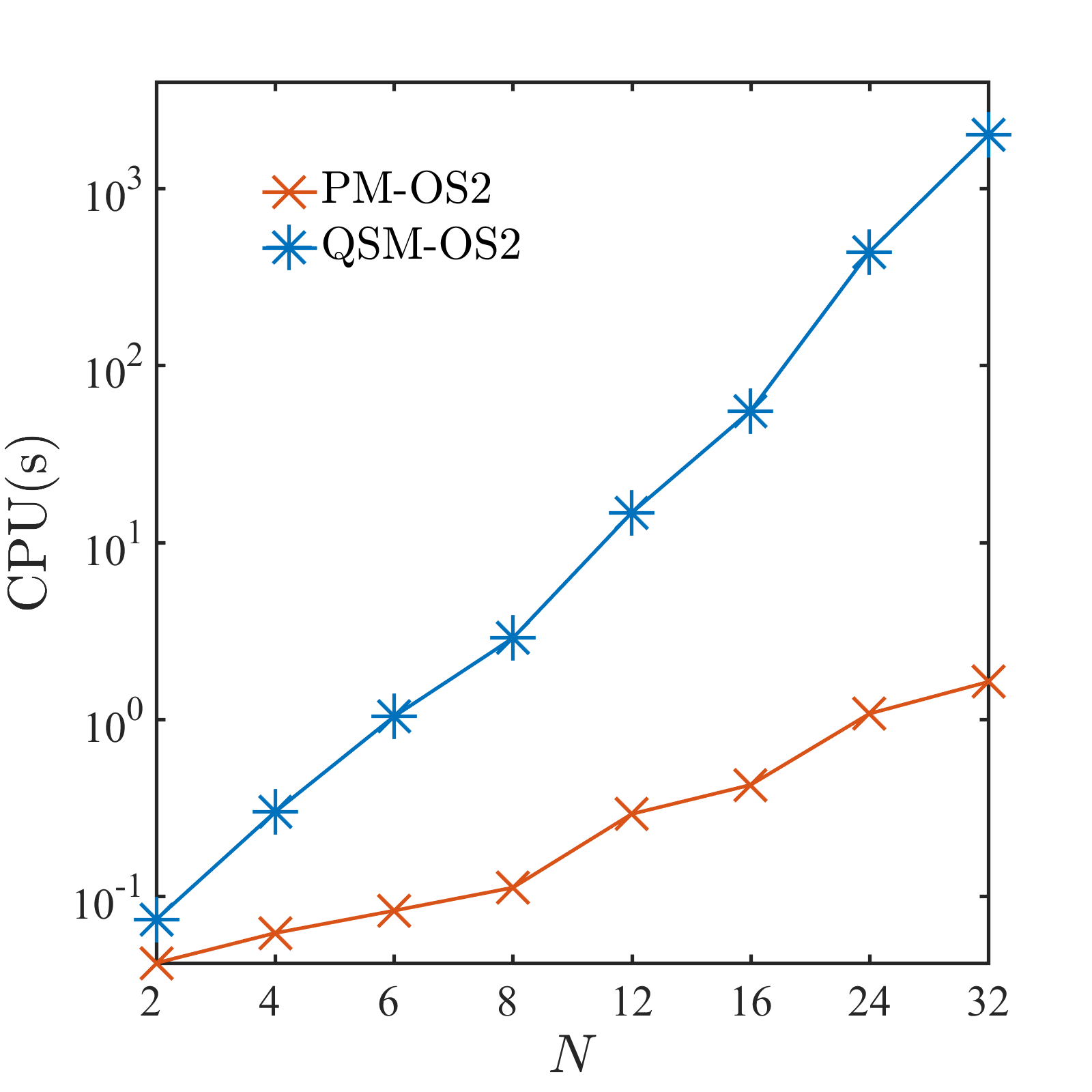}
\caption{CPU time between PM-OS2 and QSM-OS2 against $N$.}\label{fig:CPU_compare}
\end{figure}

Next, we verify the error order of applying OS2 method to solve TQSE in time direction. Meanwhile, we choose $h=\pi/128$ to make the approximation error in space direction that does not affect the error in time direction. From the numerical results shown in Table \ref{tab:error}, PM-OS2 and QSM-OS2 have the same convergence rate in space direction, while the PM-OS2 is more efficient in CPU time. Therefore, the time error of both methods is the same and we only present the temporal error of PM-OS2, see Table \ref{tab:timeerror}.

\begin{table}[!hptb] 
	\vspace{-0.2cm}
	\centering
\caption{Temporal error test of PM-OS2 with $h=\pi/128$. }\label{tab:timeerror}
\vspace{0.1cm}
\renewcommand\arraystretch{1.6}
\setlength{\tabcolsep}{3mm}
{\begin{tabular}{|c|c|c|c|c|}\hline
$\tau$   &$1\times 10^{-3}$   &$5\times 10^{-4}$&$2.5\times 10^{-4}$  & $1.25\times 10^{-4}$\\ \hline
$\Err^\tau_{h}$ & 1.608e-09 & 4.021e-10 & 1.005e-10 & 2.513e-11  \\ \hline
$\Ord$& - & 2.00 &2.00   & 2.00    \\ \hline
		\end{tabular}}
\end{table}
%\begin{table}[!hptb] 
%	\vspace{-0.2cm}
%	\centering
%	\footnotesize{
	%		\caption{Required CPU time (s) of PM and QSM for different $N$.}\label{tab:CPU}
	%		\vspace{0.1cm}
	%		\vspace{0.001cm}
	%		\renewcommand\arraystretch{1.3}
	%		\setlength{\tabcolsep}{3mm}
	%		{\begin{tabular}{|c|c|c|c|c|c|c|c|}\hline
			%				$2N\times 2N$  & $4\times 4$& $8\times 8$ &$16\times 16$  &$32\times 32$ &$64\times 64$   \\ \hline
			%				PM   &0.006 & 0.021  &  0.097 & 0.285    &         \\ \hline
			%				QSM  &0.106 & 1.440   & 22.101  &351.243    &  \\ \hline
			%		\end{tabular}}
	%	}
%\end{table}

%\begin{example}
%Consider two dimensional QSE \eqref{eq:QSE} with the quasiperiodic potential
%\begin{align*}
%V(\bx)=\sum_{\blam\in \bLam_1}e^{i\blam\cdot \bx},~~\blam_1=\{(1,0),(\sqrt{5},1)\}.	
%\end{align*}
%\end{example}

\subsection{Two-dimensional cases}

%\begin{align*}
%	V(x)=2(\cos x_1+\cos\sqrt{5}x_1+\cos\sqrt{2}x_2).
%\end{align*}

In this subsection, we further demonstrate that the PM is a high-precision and efficient algorithm to solve TQSE through the two-dimensional example.
Consider the potential function
\begin{align*}
	V(\bx)=\sum_{j=1}^{5} e^{i (\bP_1\bk_j)\cdot \bx}-e^{i (\bP_1\bk_6)\cdot \bx},	~~\bx\in\bbR^2,
\end{align*}
where 
\begin{align*}
	(\bk_1, \bk_2, \bk_3,\bk_4,\bk_5,\bk_6)= 
	\begin{pmatrix}
		0 & 0 & 1 & 0 & 0 & 0\\ 
		1 & -1& 0 & 0 & 0 & 0\\
		0 & 0 & 0 & 0 & 1 & -1\\
		0 & 0 & 0 & 1 & 0 & 0
	\end{pmatrix},
\end{align*}
and the corresponding projection matrix
\begin{align*}
	\bP_1 = \begin{pmatrix}
		1 & \cos(\pi/4) & 0 & -\cos(\pi/4) \\
		0 & \sin(\pi/4) & 1 & \sin(\pi/4) 
	\end{pmatrix}.
\end{align*} 
Therefore, this quasiperiodic system can be embedded into a four-dimensional parent system. 
Let the initial value
\begin{align*}
	u_0(\bx)=\sum_{\blam\in \sigma_1(u_0)}e^{-(\vert k_1\vert +\vert k_2\vert+\vert k_3\vert+\vert k_4\vert )} e^{i\blam\cdot \bx},~~\blam = \bP_1\bk,
\end{align*}
where $\sigma_1(u_0)=\{\blam = \bP_1\bk: k_1, k_2, k_3, k_4\in \bbZ, -16\leq k_1, k_2, k_3,k_4\leq 15\}.$

The reference solution on $\bbR^2$ is given by PM-OS2 under $\tau=1\times 10^{-7}$ and the mesh size of the four-dimensional torus $h=\pi/64$ in each direction. In Figure \ref{fig:PMreal2d8}, we present the distribution of the probability density function $\vert u(\bx,T) \vert^2$ and the corresponding diffraction pattern of the reference solution, respectively. 
The structure shown in Figure \ref{fig:PMreal2d8} is a typical octagonal quasicrystal. The reciprocal space, shown on the right plot of Figure \ref{fig:PMreal2d8}, indicates  octagonal rotational symmetry.

\begin{figure}[!htbp]
	\centering
	\subfigure{
		\includegraphics[width=2in]{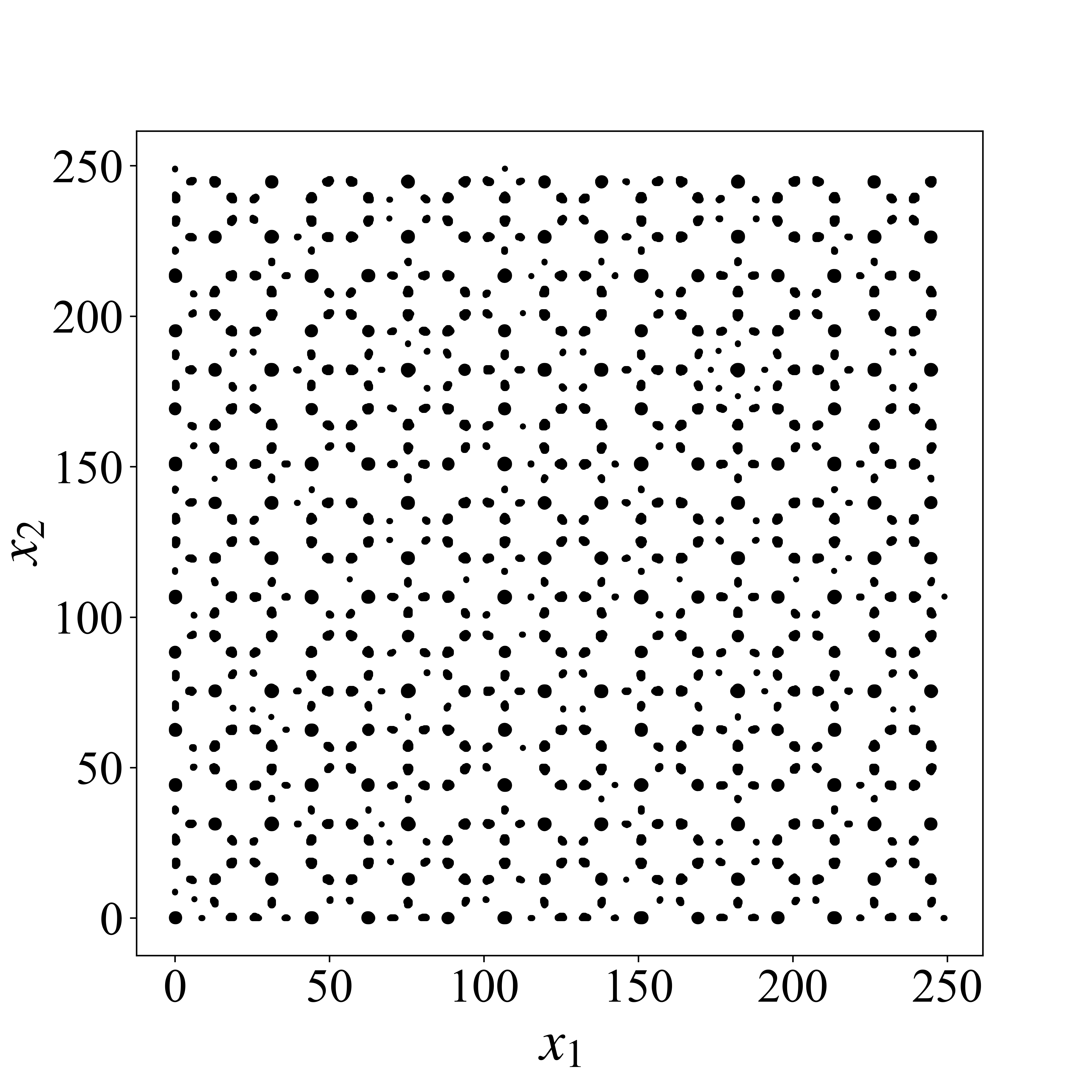}}
	\hspace{0.01in}
	\subfigure{
		\includegraphics[width=2in]{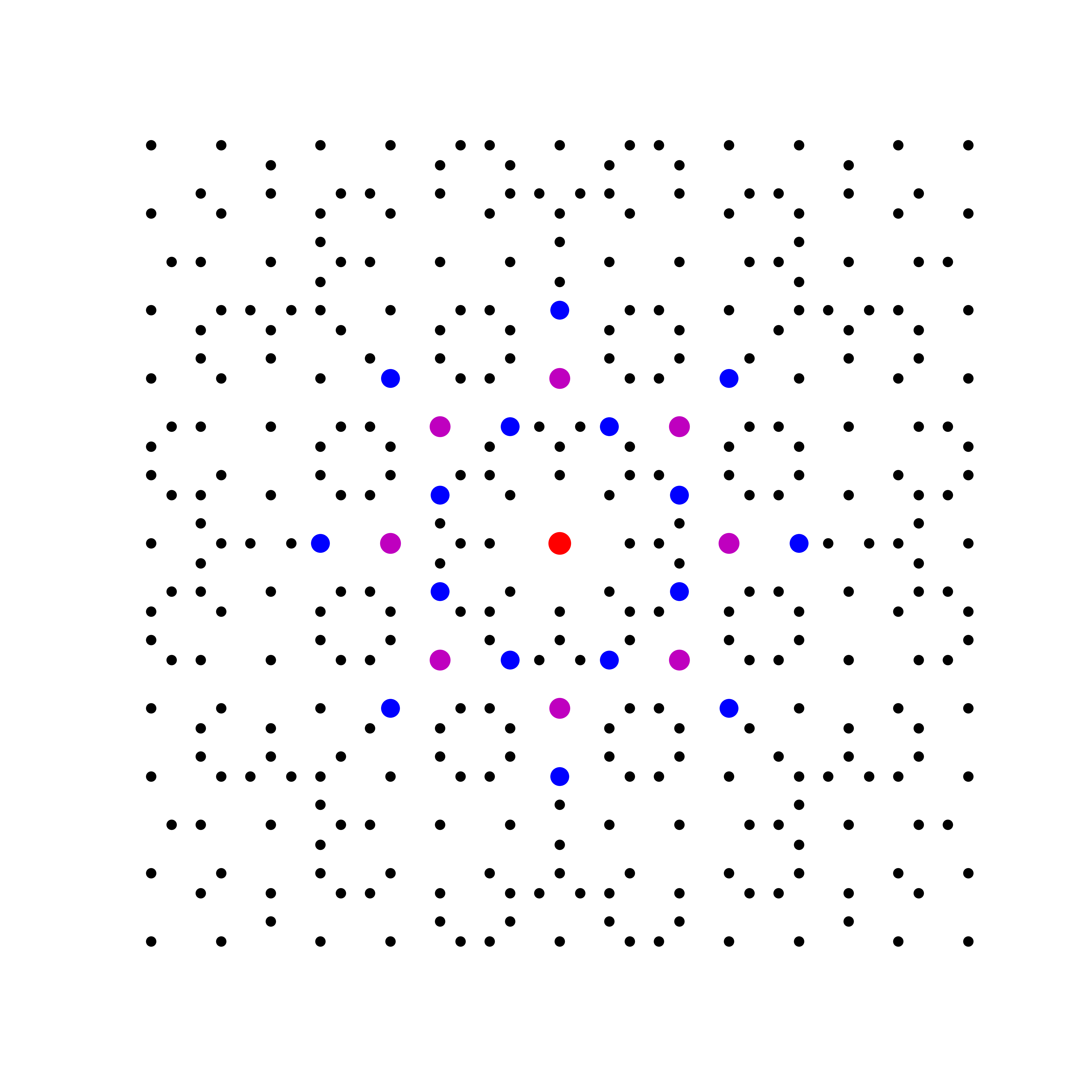}
		\label{subfig:2D8fold}}
	\caption{The reference solution of two-dimensional TQSE. 
		Left: Probability density distribution $\vert u(\bx,T) \vert^2$, $\bx\in[0,80\pi)^2$. Right: Distribution of Fourier exponents $\blam$ when $\vert \hat{u}_{\blam} \vert >23.970$. By arranging the values of $\vert \hat{u}_{\blam}\vert$ from largest to smallest, red, purple and blue dots represent the distribution of Fourier exponents corresponding to the first, first nine and first twenty-five values of $\vert \hat{u}_{\blam}\vert$, respectively.
	} \label{fig:PMreal2d8}
\end{figure}

Table \ref{tab:errorPM} shows the numerical error $\Err^\tau_{h}$ and CPU time of PM-OS2. Obviously, the PM-OS2 has exponential convergence. When the fine mesh size $h=\pi/16$, the numerical error in space does not affect the error in time direction. Table \ref{tab:timeerror2d} verifies that the PM-OS2 has second-order error accuracy in time direction with $h=\pi/16$. 

\begin{table}[!hptb]
	\vspace{-0.2cm}
	\centering
		\caption{Numerical error $\Err^\tau_{h}$ and CPU time of PM-OS2 for different $N$.}
		\vspace{0.1cm}
		\label{tab:errorPM}
		\renewcommand\arraystretch{1.6}
		\setlength{\tabcolsep}{2.8mm}
		{\begin{tabular}{|c|c|c|c|c|}\hline
				$N \times N\times N\times N$& $4\times 4\times 4\times 4$   & $8\times 8\times 8\times 8$ &$16\times 16\times 16\times 16$  &$32\times 32\times 32\times 32$\\ \hline
				$\Err^\tau_{h}$ &3.881e-03   &4.830e-04   &9.936e-09 & 7.174e-13 \\ \hline
				CPU(s) &0.132    &1.537  & 32.217   &663.971  \\ \hline
		\end{tabular}}
\end{table}

\begin{table}[!hptb] 
\vspace{-0.2cm}
\centering
\caption{Temporal error test of PM-OS2 with $h=\pi/16$. }\label{tab:timeerror2d}
		\vspace{0.2cm}
		\renewcommand\arraystretch{1.6}
		\setlength{\tabcolsep}{3mm}
		{\begin{tabular}{|c|c|c|c|c|}\hline
$\tau$   &$1\times 10^{-3}$   &$5\times 10^{-4}$&$2.5\times 10^{-4}$  & $1.25\times 10^{-4}$\\ \hline
$\Err^\tau_{h}$ &5.230e-10  &1.308e-10  &3.281e-11   &8.667e-12      \\ \hline
$\Ord$ &- & 2.00     &1.99  &1.92  \\ \hline
		\end{tabular}}
\end{table}

Furthermore, by changing the projection matrix and the initial value, we can obtain different quasicrystal.
Let projection matrix be
\begin{align*}
	\bP_2 = \begin{pmatrix}
		1 & \cos(\pi/6) & \cos(\pi/3) & 0\\
		0 & \sin(\pi/6) & \sin(\pi/3) & 1
	\end{pmatrix},
\end{align*}
and initial value be 
\begin{align*}
	u_0(\bx)=\sum_{\blam\in \sigma_2(u_0)}e^{-\Vert \blam\Vert^2} e^{i\blam\cdot \bx},
\end{align*}
where $\sigma_2(u_0)=\{\blam = \bP_2\bk: k_1, k_2, k_3, k_4\in \bbZ, -8\leq k_1, k_2, k_3, k_4\leq 7\}$, we can obtian the dodecagonal quasicrystal as shown in Figure \ref{fig:PMreal2d12} under the time step size $\tau=1\times 10^{-7}$ and the mesh size $h=\pi/32$ of the four-dimensional torus.
%{\color{blue}In Figure \ref{fig:PMreal2d12}, we present the distribution of the probability density function $\vert u(\bx,T) \vert^2$ of the reference solution in $[0,120\pi)^2$.}

%{\color{red}Therefore, PM is highly accurate and efficient in solving high-dimensional TQSE.}

\begin{figure}[!htbp]
	\centering
	\subfigure{
\includegraphics[width=2in]{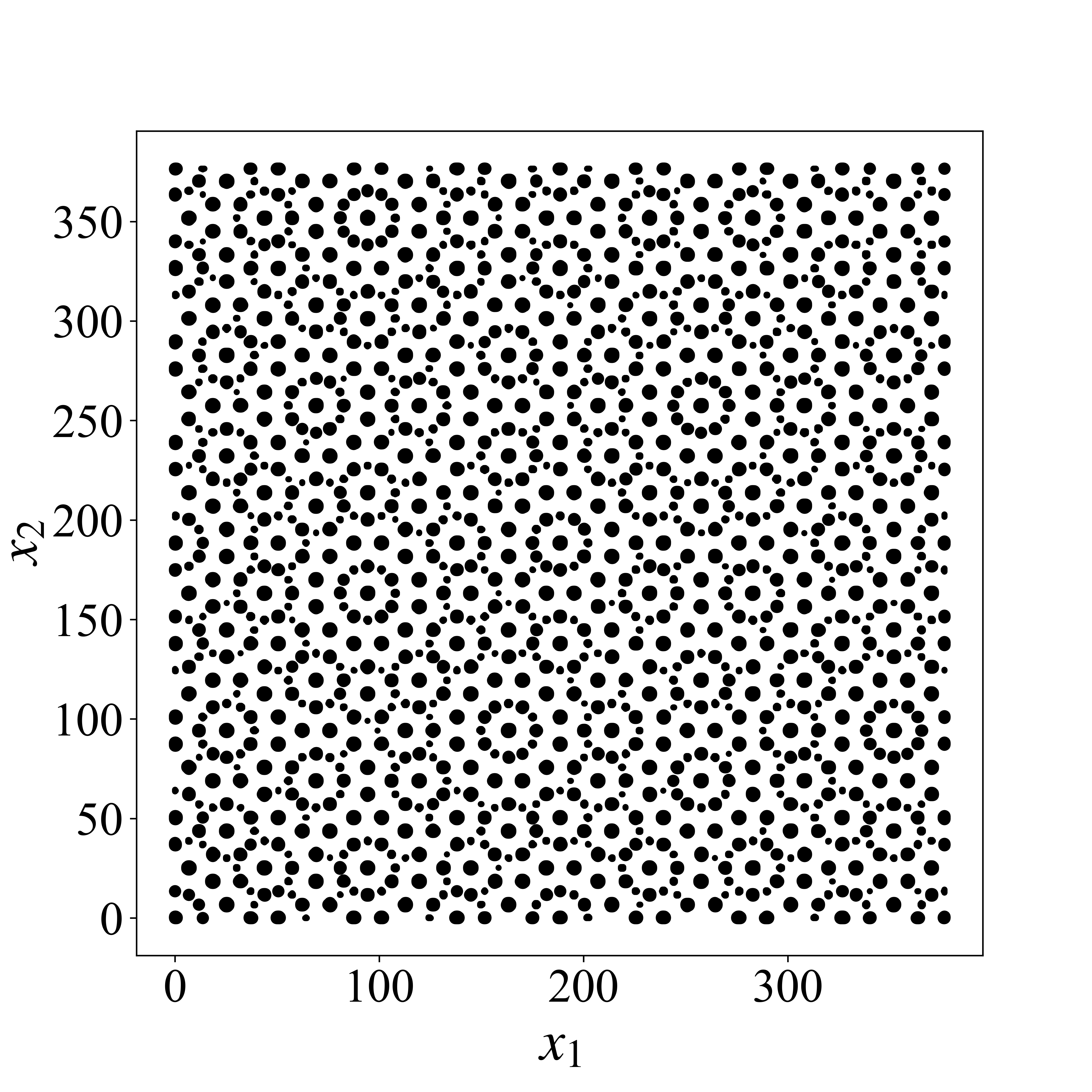}}
	\hspace{0.01in}
	\subfigure{
\includegraphics[width=2in]{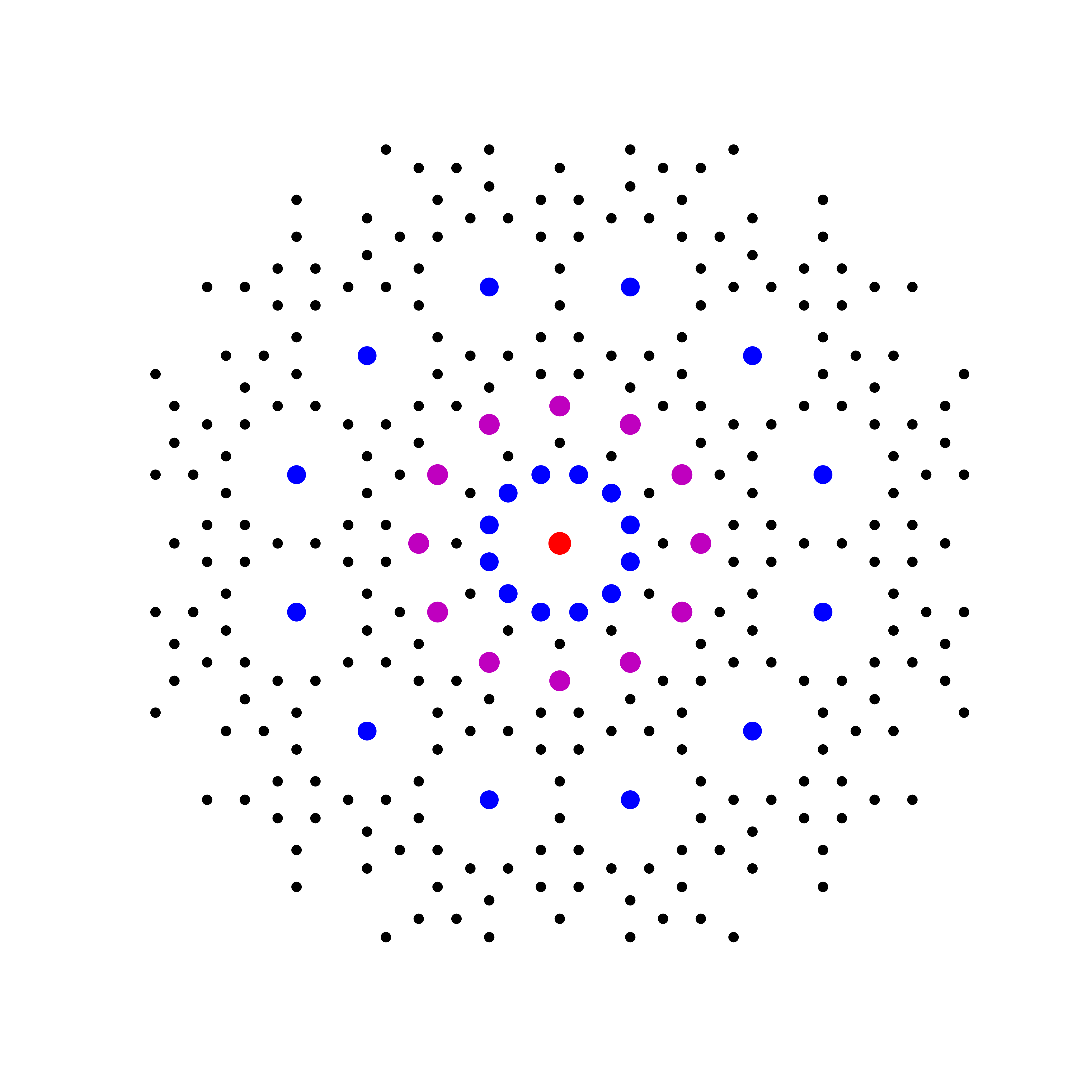}}
\caption{The reference solution of two-dimensional TQSE. 
Left: Probability density distribution $\vert u(\bx,T) \vert^2$, $\bx\in[0,120\pi)^2$. Right: Distribution of Fourier exponents $\blam$ when $\vert \hat{u}_{\blam} \vert >0.044$. We arrange the values of $\vert \hat{u}_{\bm\lambda}\vert$ from largest to smallest, then red, purple and blue dots represent the distribution of Fourier exponents corresponding to the first, first thirteen and first thirty-seven values of $\vert \hat{u}_{\bm\lambda}\vert$, respectively.}\label{fig:PMreal2d12}
\end{figure}

\section{Conclusions}
\label{sec:diss}

In this paper, two high-accuracy numerical methods, the QSM-OS2 and PM-OS2, have been developed to solve arbitrary dimensional TQSE \eqref{eq:QSE} and to obtain global quasiperiodic solutions. 
A rigorous convergence analysis 
shows that QSM-OS2 and PM-OS2 have exponential convergence in space when the potential function has enough regularity and second-order accuracy in time. Meanwhile the computational complexity analysis demonstrates that the PM-OS2 is more efficient than the QSM-OS2. The one- and two-dimensional numerical experiments further verify the  theoretical results.

%\section*{Declarations}
%
%%{\bf Funding:} 
%This work was supported in part by the National K\&D Program of China (2023YFA1008802, 2023YFB3001604), National Natural Science Foundation of China (12171412).
%JZ was supported in part by Hunan Youth Science and Technology Innovation Talents Project (2021RC3110), the Key Project of Education Department of Hunan Province (21A0116).

%\bmhead{Acknowledgments}
%
%Acknowledgments are not compulsory. Where included they should be brief. Grant or contribution numbers may be acknowledged.
%
%Please refer to Journal-level guidance for any specific requirements.

%\begin{thebibliography}{99} 

%\end{thebibliography}

%\begin{itemize}
%\item Funding
%\item Conflict of interest/Competing interests (check journal-specific guidelines for which heading to use)
%\item Ethics approval 
%\item Consent to participate
%\item Consent for publication
%\item Availability of data and materials
%\item Code availability 
%\item Authors' contributions
%\end{itemize}

%\noindent
%If any of the sections are not relevant to your manuscript, please include the heading and write `Not applicable' for that section. 

\begin{appendices}

\section{Another way to analyze QSM-OS2}
\label{Appendix:analysisQSM-OS2}

The QSM is an extension of Fourier spectral method. 
%From numerical implementation of QSM-OS2, we can find that QSM-OS2 is implemented in the space of quasiperiodic functions, independent of higher dimensional parent function. 
Based on the embedding theorem of the quasiperiodic function given below, we can give the convergence analysis of QSM-OS2.
Firstly, we give the following embedding theorem.
Similar to the definition of $X_{\alpha}$ on a torus $\bbT^n$, we define the quasiperiodic function space $\mathcal{X}_{\alpha}$ on $\bbR^d$
\begin{align*}
	\calX_{\alpha}=
	\Big \{f(\bx)=\sum_{\blam\in\sigma(f)} \hf_{\blam}e^{i\blam\cdot \bx}\in L_{QP}^2(\bbR^d):
	\Vert  (-\Delta)^\alpha f \Vert^2
	=\sum_{\blam\in \sigma(f)} \vert \hf_{\blam}\vert^2\cdot
	\Vert \blam \Vert^{4\alpha} <\infty \Big \}.
\end{align*}

\begin{thm}
	\label{thm:embedingquasiperiodic}
	For any $f\in \QP(\bbR^d)$. Assume that $\alpha\geq s/2> d/4$. Then, the bound
	\begin{align*}
		\Vert f\Vert_{L_{QP}^{\infty}(\bbR^d)}\leq C\, \Vert f\Vert_{s} \leq C\, \Vert f\Vert_{\calX_{\alpha}}
	\end{align*} 
	is valid and the following estimate
	\begin{align*}
		\Vert w f \Vert_{L_{QP}^2(\bbR^d)}\leq C \, \Vert w \Vert_{L_{QP}^2(\bbR^d)}\cdot\Vert f\Vert_{\calX_\alpha},
		~~w\in L_{QP}^{2}(\bbR^d),~~f\in \calX_{\alpha}
	\end{align*}
	holds where $C$ is a constant.
\end{thm}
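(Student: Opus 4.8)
The plan is to adapt the proof of Theorem~\ref{lem:normineq} so that it runs directly on $\bbR^d$: the torus lattice $\bbZ^n$ and the parent-function norm $\Vert\cdot\Vert_{X_\alpha}$ are replaced by the exponent set $\sigma(f)$ and the norm $\Vert\cdot\Vert_{\calX_\alpha}$ on $\bbR^d$, while the three-step structure (an $\ell^1$-estimate of the Fourier coefficients, absolute-to-uniform convergence, and a norm comparison) is retained. First I would reduce the $L^\infty_{QP}$-bound to an $\ell^1$-bound: once one shows $\sum_{j}\vert\hf_{\blam_j}\vert\le C\,\Vert f\Vert_{s}$, the Fourier--Bohr series of $f$ is absolutely convergent, hence (Theorem~1.20 of~\cite{Corduneanu1989almost}) uniformly convergent to $f$, and therefore $\Vert f\Vert_{L_{QP}^{\infty}(\bbR^d)}\le\sum_{j}\vert\hf_{\blam_j}\vert\le C\,\Vert f\Vert_{s}$.

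For the $\ell^1$-bound I would insert the weights $(1+\vert\blam_j\vert^{2})^{\pm s/2}$ and apply the Cauchy--Schwarz inequality,
\begin{align*}
	\sum_{j}\vert\hf_{\blam_j}\vert
	\le\Big(\sum_{j}\vert\hf_{\blam_j}\vert^{2}(1+\vert\blam_j\vert^{2})^{s}\Big)^{1/2}
	\Big(\sum_{j}(1+\vert\blam_j\vert^{2})^{-s}\Big)^{1/2}
	=\Vert f\Vert_{s}\,\Big(\sum_{j}(1+\vert\blam_j\vert^{2})^{-s}\Big)^{1/2}.
\end{align*}
Everything then hinges on the convergence of the auxiliary series $\sum_{j}(1+\vert\blam_j\vert^{2})^{-s}$, and this is the step I expect to be the main obstacle. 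The exponents $\sigma(f)=\{\bP\bk:\bk\in\bbZ^n\}$ are dense in $\bbR^d$ when $n>d$, so the series cannot be handled by a naive comparison with an integral over $\bbR^d$; instead I would reindex the sum by $\bk\in\bbZ^n$ via Lemma~\ref{lem:object} (which gives $\hf_{\bP\bk}=\hf_{p}(\bk)$ for the parent function $f_p$), reducing it to a standard lattice sum over $\bbZ^n$. It is precisely here that the true, higher-dimensional structure of the quasiperiodic exponents enters and that the dimensional threshold on $s$ must be imposed; the constant produced depends only on $\bP$ and $s$.

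It remains to compare norms. Since $\Vert f\Vert_{s}^{2}=\sum_{\blam}(1+\vert\blam\vert^{2})^{s}\vert\hf_\blam\vert^{2}$ and $\Vert f\Vert_{\calX_\alpha}^{2}=\sum_{\blam}(1+\Vert\blam\Vert^{4\alpha})\vert\hf_\blam\vert^{2}$, the bound $\Vert f\Vert_{s}\le C\,\Vert f\Vert_{\calX_\alpha}$ follows from the pointwise estimate $(1+\vert\blam\vert^{2})^{s}\le C\,(1+\Vert\blam\Vert^{4\alpha})$, which holds for all $\blam$ because $\vert\blam\vert^{2}\le d\,\Vert\blam\Vert^{2}$ and $2s\le 4\alpha$ (that is, $\alpha\ge s/2$), together with the monotonicity $\Vert f\Vert_{\calX_{\alpha_1}}\le\Vert f\Vert_{\calX_{\alpha_2}}$ for $\alpha_1\le\alpha_2$ used exactly as in Theorem~\ref{lem:normineq}; this gives the first display. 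The product estimate is then immediate from Hölder's inequality in $L^2_{QP}(\bbR^d)$,
\begin{align*}
	\Vert wf\Vert_{L_{QP}^2(\bbR^d)}\le\Vert w\Vert_{L_{QP}^2(\bbR^d)}\,\Vert f\Vert_{L_{QP}^{\infty}(\bbR^d)}
	\le C\,\Vert w\Vert_{L_{QP}^2(\bbR^d)}\,\Vert f\Vert_{\calX_\alpha},
\end{align*}
just as at the end of the proof of Theorem~\ref{lem:normineq}.
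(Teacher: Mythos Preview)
Your argument tracks the paper's almost verbatim: the same Cauchy--Schwarz splitting with weights $(1+\vert\blam_j\vert^{2})^{\pm s/2}$, the same passage from absolute to uniform convergence of the Fourier--Bohr series, and the same closing norm comparison and H\"older product estimate. The paper disposes of the auxiliary series in one line, simply asserting that $\sum_j(1+\vert\blam_j\vert^{2})^{-s}$ converges once $s>d/2$; you are right to flag this as the delicate step.

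However, your proposed remedy does not actually close the gap. Reindexing the sum by $\bk\in\bbZ^n$ leaves the weight as $(1+\vert\bP\bk\vert^{2})^{-s}$, and for $n>d$ this lattice sum still diverges for every $s$: infinitely many $\bP\bk$ accumulate near the origin, so no power decay in $\vert\bP\bk\vert$ is summable. To get a convergent auxiliary factor you would have to switch to the weight $(1+\vert\bk\vert^{2})^{-s}$ with $s>n/2$, but then the companion Cauchy--Schwarz factor becomes $\Vert f_p\Vert_s$ rather than $\Vert f\Vert_s$, and you have reproduced Theorem~\ref{lem:normineq} instead of the present statement with its $d$-dependent threshold. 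In short, you have correctly located a genuine obstacle that the paper's proof simply asserts away; neither the paper's one-line claim nor your reindexing resolves it without additional input or a stronger hypothesis on~$s$.
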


\begin{proof}
	By the H\"{o}lder inequality, we have 
	\begin{align*}
		\sum^{p}_{j=1}\vert \hat{f}_{\blam_j}\vert
		&=\sum^{p}_{j=1}\vert \hat{f}_{\blam_j}\vert \cdot (1+\vert \blam_j \vert^2)^{s/2} \cdot (1+\vert \blam_j \vert^2)^{-s/2}\\
		&\leq \Big( \sum^{p}_{j=1}\vert \hat{f}_{\blam_j}\vert^{2}
		(1+\vert \blam_j \vert^2)^{s} \Big )^{\frac{1}{2}}
		\cdot \Big(\sum^{p}_{j=1} (1+\vert \blam_j \vert^2)^{-s}\Big )^{\frac{1}{2}}.
	\end{align*}
	Set $p\rightarrow \infty$, it follows that
	\begin{align*}
		\sum^{\infty}_{j=1}\vert \hat{f}_{\blam_j}\vert
		&\leq \Big( \sum^{\infty}_{j=1}\vert \hat{f}_{\blam_j}\vert^{2}
		(1+\vert \blam_j \vert^2)^{s} \Big )^{\frac{1}{2}}
		\cdot \Big( \sum^{\infty}_{j=1} (1+\vert \blam_j \vert^2)^{-s}\Big )^{\frac{1}{2}}\\
		&=\Vert f \Vert_{H^s_{QP}(\bbR^d)}
		\cdot \Big( \sum^{\infty}_{j=1} (1+\vert \blam_j \vert^2)^{-s}\Big )^{\frac{1}{2}}.
	\end{align*}
	When $s>d/2$, the series 
	$\sum^{\infty}_{j=1} (1+\vert \blam_j \vert^2)^{-s}$ converges. 
	For $f\in H^s_{QP}(\bbR^d)$, then $f\in L^1_{QP}(\bbR^d)$ and the Bohr-Fourier series of $f$ is absolutely convergence. Then it converges uniformly to $f$. 
	Consequently, 
	\begin{align*}
		\Vert f\Vert_{L_{QP}^{\infty}(\bbR^d)}
		\leq \sum^{\infty}_{j=1}\vert \hat{f}_{\blam_j}\vert
		\leq C \Vert f \Vert_{H^s_{QP}(\bbR^d)},
	\end{align*}
	where $C$ is a constant. Similar to the proof of Theorem \ref{lem:normineq}, this theorem is proved.
\end{proof}

The error analysis of QSM without the help of parent functions is given below, see \cite{Jiang2022Numerical} for details. 

\begin{thm}[\cite{Jiang2022Numerical}]
	\label{thm:QSM-analysis2}
	Suppose that $f\in H^{\alpha}_{QP}(\bbR^d)$ and the nonzero minimum singular value $\sigma_{\min}(\bm P)$ of the projection matrix $\bm P$ satisfies $\sigma_{\min}(\bm P)>\theta >0$. Then, there exists a constant $C(\theta)$, independent of $f$ and $N$, such that		
	\begin{align*}
		\Vert\mathcal{P}_N f-f\Vert_{L_{QP}^2(\bbR^d)} \leq C(\theta) N^{-\alpha}\vert f \vert_\alpha.
	\end{align*}
\end{thm}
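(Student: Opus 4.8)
The plan is to express the approximation error as the tail of the Fourier--Bohr series of $f$, to observe that discarding the $\bk$-box $\bK_N^n$ together with the hypothesis $\sigma_{\min}(\bP)>\theta$ keeps only exponents $\blam=\bP\bk$ of modulus at least of order $\theta N$, and then to trade that size against the Sobolev seminorm in the standard way.

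First, by the Parseval identity \eqref{eq:parseval} and the definition of $\mathcal{P}_N$, and since $\bk\mapsto\bP\bk$ is a bijection of $\bbZ^n$ onto $\sigma(f)$,
\begin{align*}
	\Vert\mathcal{P}_N f-f\Vert_{L_{QP}^2(\bbR^d)}^2
	=\sum_{\blam\in\sigma(f)\setminus\sigma_N}\vert\hat{f}_{\blam}\vert^2
	=\sum_{\bk\in\bbZ^n\setminus\bK_N^n}\vert\hat{f}_{\bP\bk}\vert^2 .
\end{align*}
By Lemma \ref{lem:object} this is also the $L^2(\bbT^n)$ truncation error of the parent function $f_p$ outside $\bK_N^n$, an alternative route to the same bound. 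The assumption $\alpha>n/2$ guarantees, through the Cauchy--Schwarz estimate already used in the proofs of Theorem \ref{lem:normineq} and Theorem \ref{thm:embedingquasiperiodic}, that the Fourier--Bohr series of $f$ converges absolutely, so the rearrangements above are legitimate.

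Next, if $\bk\notin\bK_N^n$ then $\vert k_j\vert\geq N$ for some index $j$, hence $\Vert\bk\Vert\geq N$; together with a lower bound of the form $\Vert\bP\bk\Vert\geq c(\theta)\Vert\bk\Vert$ (valid for the integer vectors that occur; see the remark below) this yields $\Vert\bP\bk\Vert\geq c(\theta)N$ for every such $\bk$. Inserting the Sobolev weight and its reciprocal,
\begin{align*}
	\sum_{\bk\in\bbZ^n\setminus\bK_N^n}\vert\hat{f}_{\bP\bk}\vert^2
	&=\sum_{\bk\in\bbZ^n\setminus\bK_N^n}\Vert\bP\bk\Vert^{-2\alpha}\,
	\Vert\bP\bk\Vert^{2\alpha}\vert\hat{f}_{\bP\bk}\vert^2\\
	&\leq\big(c(\theta)N\big)^{-2\alpha}\sum_{\blam\in\sigma(f)}\Vert\blam\Vert^{2\alpha}\vert\hat{f}_{\blam}\vert^2
	=c(\theta)^{-2\alpha}N^{-2\alpha}\,\vert f\vert_\alpha^2 .
\end{align*}
Taking square roots gives $\Vert\mathcal{P}_N f-f\Vert_{L_{QP}^2(\bbR^d)}\leq C(\theta)N^{-\alpha}\vert f\vert_\alpha$ with $C(\theta)=c(\theta)^{-\alpha}$, which depends only on $\theta$ (and on $\alpha$), as required.

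The one step that needs care, and the source of both the hypothesis $\sigma_{\min}(\bP)>\theta$ and the constant $C(\theta)$, is the lower bound $\Vert\bP\bk\Vert\geq c(\theta)\Vert\bk\Vert$: when $n>d$ the matrix $\bP$ has a nontrivial kernel, so this inequality fails for real vectors near $\ker\bP$. For the integer vectors that actually appear one uses the $\bbQ$-linear independence of the columns of $\bP$, which forces $\ker\bP\cap\bbZ^n=\{\bo\}$, together with control of the component of $\bk$ orthogonal to $\ker\bP$, on which $\sigma_{\min}(\bP)$ does bound $\Vert\bP\bk\Vert$ from below. Alternatively one sidesteps this entirely by running the estimate on the parent function $f_p$ with $\Vert\bk\Vert\geq N$ and the seminorm $\vert f_p\vert_\alpha$, exactly as in Lemma \ref{thm:pm error-2}, and translating back to $\vert f\vert_\alpha$ at the very end. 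The remainder of the proof is just the classical spectral truncation bound.
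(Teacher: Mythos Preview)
The paper does not supply its own proof of this statement; it is quoted from \cite{Jiang2022Numerical} and stated without argument, so there is no in-paper proof to compare against.

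On its own merits, your proposal has a genuine gap precisely at the step you flag. The inequality $\Vert\bP\bk\Vert\geq c(\theta)\Vert\bk\Vert$ for nonzero $\bk\in\bbZ^n$ does \emph{not} follow from $\sigma_{\min}(\bP)>\theta$ together with $\bbQ$-linear independence of the columns of $\bP$. The latter only guarantees $\bP\bk\neq\bo$; it gives no uniform quantitative lower bound. Take $d=1$, $n=2$, $\bP=(1,\sqrt{2})$, so that $\sigma_{\min}(\bP)=\sqrt{3}$: Dirichlet's theorem supplies infinitely many $(p,q)\in\bbZ^2$ with $\vert q\sqrt{2}-p\vert<1/\vert q\vert$, hence $\Vert\bP\bk\Vert/\Vert\bk\Vert\to 0$ along $\bk=(-p,q)$. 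Your orthogonal decomposition does not help, since $\Vert\bk_{\sperp}\Vert$ can be of order $1/\Vert\bk\Vert$ even when $\Vert\bk\Vert$ is large; this is the classical small-divisor obstruction, and the hypothesis on $\sigma_{\min}(\bP)$ controls only $\Vert\bP\bk_{\sperp}\Vert/\Vert\bk_{\sperp}\Vert$, not $\Vert\bk_{\sperp}\Vert$ itself.

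Your fallback---run the estimate on the parent function and ``translate back to $\vert f\vert_\alpha$ at the very end''---breaks for the same reason. That route bounds the tail by $\vert f_p\vert_\alpha=\big(\sum_{\bk}\vert\bk\vert^{2\alpha}\vert\hat f_{\bP\bk}\vert^2\big)^{1/2}$, while the quasiperiodic seminorm is $\vert f\vert_\alpha=\big(\sum_{\bk}\vert\bP\bk\vert^{2\alpha}\vert\hat f_{\bP\bk}\vert^2\big)^{1/2}$. From $\vert\bP\bk\vert\leq C\vert\bk\vert$ one only gets $\vert f\vert_\alpha\lesssim\vert f_p\vert_\alpha$, which is the wrong direction; bounding $\vert f_p\vert_\alpha$ by $\vert f\vert_\alpha$ again requires exactly the missing small-divisor estimate. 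The single-mode test $f(x)=e^{i\blam x}$ with $\blam=\bP\bk$ and $\bk$ a Dirichlet approximant makes the failure explicit. A correct argument therefore needs an ingredient beyond what you have used---a different interpretation of $\vert f\vert_\alpha$, or a Diophantine-type input---and you should consult \cite{Jiang2022Numerical} for how this is actually handled.
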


Jahnke \textit{et al.} have proved the convergence analysis by applying OS2 method in time and the Fourier pseudo-spectral method in space to solve the Schr\"{o}dinger equation with periodic potentials \cite{Jahnke2000error}. Therefore, based on Theorems \ref{thm:embedingquasiperiodic} and \ref{thm:QSM-analysis2}, similar to the analysis in \cite{Jahnke2000error}, we can obtain the error analysis of QSM-OS2.

\begin{thm}
	Let $u(\cdot,t_m)$ and $u^m_N$ be the solutions of problems \eqref{eq:QSE-re} and \eqref{eq:splitting-QSM} at $t_m$, respectively. Then under the conditions
	
	(i) The potential $v$ is a $\calC^1$-smooth function and $\Vert v\Vert_{\calX_\alpha}\leq C$, $\alpha\geq s/2> d/4$;
	
	(ii) The quasiperiodic function $u^j\in H_{QP}^{\alpha}(\bbR^d),~0\leq j\leq m$;\\
	the global error bound of QSM-OS2 \eqref{eq:splitting-QSM} is	
	\begin{align*}
		\Vert u^m_N- u(\cdot,t_m)\Vert_{L_{QP}^2(\bbR^d)}\leq C(\tau^2 + N^{-\alpha}).
	\end{align*}
The constant $C>0$ depends on $C_V$, $\sup\{\Vert u(\cdot, t) \Vert_{L_{QP}^2(\bbR^d)}: 0\leq t \leq T \}$ and $\max\{\vert u^j\vert_{\alpha}: 0\leq j\leq m \}$.
\end{thm}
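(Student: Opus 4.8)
The plan is to follow the same architecture as the proof of Theorem~\ref{thm:PMerror}, with the spectral truncation $\calP_N$ and the quasiperiodic spaces $H^{\alpha}_{QP}(\bbR^d)$, $\calX_{\alpha}$ playing the roles that the interpolation operator $I_N$ and the parent-function spaces $H^{\alpha}(\bbT^n)$, $X_{\alpha}$ played there; the two quasiperiodic-specific inputs are Theorem~\ref{thm:embedingquasiperiodic} (to control multiplication by $V$, in place of Theorem~\ref{lem:normineq}) and Theorem~\ref{thm:QSM-analysis2} (the $O(N^{-\alpha})$ spectral truncation estimate, in place of Lemma~\ref{thm:pm error-2}). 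First I split $\Vert u^m_N-u(\cdot,t_m)\Vert_{L^2_{QP}(\bbR^d)}\le\Vert u^m_N-u^m\Vert_{L^2_{QP}(\bbR^d)}+\Vert u^m-u(\cdot,t_m)\Vert_{L^2_{QP}(\bbR^d)}$, where $u^m=\calS^m u_0$ solves \eqref{eq:splitting}. The temporal term is handled exactly as in Theorem~\ref{thm:PMerror}: Lady Windermere's fan gives $u^m-u(\cdot,t_m)=\sum_{j=0}^{m-1}\calS^{m-j-1}(\calS-\calT)\calT^{j}u_0$, and since $\calS$ is unitary (Lemmas~\ref{thm:selfadjoint} and~\ref{thm:stone}) while $\Vert\calT^{j}u_0\Vert_{L^2_{QP}(\bbR^d)}=\Vert u(\cdot,t_j)\Vert_{L^2_{QP}(\bbR^d)}$, Theorem~\ref{thm:timeerror} (one-step error $O(\tau^3)$ when $V\in\calC^1_{QP}(\bbR^d)$) yields $\Vert u^m-u(\cdot,t_m)\Vert_{L^2_{QP}(\bbR^d)}\le C\tau^2\sup_{0\le t\le T}\Vert u(\cdot,t)\Vert_{L^2_{QP}(\bbR^d)}$.

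For the spatial term the key structural fact is that $A=-\Delta$ is diagonal in the Fourier--Bohr basis, so $\calP_N$ commutes with $e^{-\frac{i}{2}\tau A}$ exactly (the identity noted right after \eqref{eq:splitting-QSM}). Hence $\check{\calS}_N$ is built from the unitary half-flows $e^{-\frac{i}{2}\tau A}$, the orthogonal projection $\calP_N$, and the potential step $e^{-i\tau\check{V}_N}$, which is power-bounded on the range of $\calP_N$ (unitary there for real $V$, as $\check{V}_N$ is then self-adjoint); consequently $\Vert\check{\calS}_N^{k}\Vert_{L^2_{QP}(\bbR^d)}\le C$ uniformly in $k$ and $N$, with no Gronwall step of the kind used in Lemma~\ref{lem:defectIN-V-W}, and the aliasing defect of Lemma~\ref{lem:defectAIN-INA} vanishes identically. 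The surviving one-step spatial defect is $\check{\calS}_N u-\calP_N\calS u=e^{-\frac{i}{2}\tau A}\calP_N e^{-i\tau V}e^{-\frac{i}{2}\tau A}(\calP_N-\calI)u$, so $\Vert\check{\calS}_N u-\calP_N\calS u\Vert_{L^2_{QP}(\bbR^d)}=O(N^{-\alpha})$ — and, crucially, it carries no extra power of $\tau$.

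To bound $\Vert u^m_N-u^m\Vert_{L^2_{QP}(\bbR^d)}$ without losing powers of $\tau$, I compare both with the exact flow $v_N(t)=e^{-it(\check{A}_N+\check{V}_N)}\calP_N u_0$ of the finite-dimensional truncated system, which is power-bounded: $\Vert v_N(t)\Vert_{L^2_{QP}(\bbR^d)}\le C\Vert u_0\Vert_{L^2_{QP}(\bbR^d)}$. Lady Windermere's fan for this finite system, together with the power-boundedness of $\check{\calS}_N$ and the local-error bound of Lemma~\ref{lem:localerror}/Theorem~\ref{thm:timeerror} (the commutators $[\check{A}_N,\check{V}_N]$ and $[\check{V}_N,[\check{V}_N,\check{A}_N]]$ being controlled by $\Vert\nabla V\Vert$-type quantities uniformly in $N$ when $V\in\calC^1_{QP}(\bbR^d)$), gives $\Vert u^m_N-v_N(t_m)\Vert_{L^2_{QP}(\bbR^d)}\le Cm\tau^3\le CT\tau^2$. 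A spectral-Galerkin energy estimate for \eqref{eq:QSE-re} then controls $\Vert v_N(t_m)-u(\cdot,t_m)\Vert_{L^2_{QP}(\bbR^d)}$: writing $v_N-u(\cdot,t)=(\calP_N-\calI)u(\cdot,t)+\xi(t)$ with $\xi(t)$ in the range of $\calP_N$, the self-adjointness of $A+V$ removes the principal part of the differential inequality for $\Vert\xi\Vert$, Theorem~\ref{thm:QSM-analysis2} handles $\Vert(\calP_N-\calI)u(\cdot,t)\Vert$, and Theorem~\ref{thm:embedingquasiperiodic} with $\Vert V\Vert_{\calX_\alpha}\le C_V$ and the hypothesis $u^j\in H^{\alpha}_{QP}(\bbR^d)$ handles the consistency term, so that $\Vert v_N(t_m)-u(\cdot,t_m)\Vert_{L^2_{QP}(\bbR^d)}\le CN^{-\alpha}\max_{0\le j\le m}|u^j|_{\alpha}$. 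The triangle inequality then combines the three bounds into $\Vert u^m_N-u(\cdot,t_m)\Vert_{L^2_{QP}(\bbR^d)}\le C(\tau^2+N^{-\alpha})$.

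The main obstacle — and the reason this cannot be a verbatim transcription of the proof of Theorem~\ref{thm:PMerror} — is keeping the spatial constant independent of $N$. For PM-OS2 the one-step discretization defect carried an extra factor $\tau$ inherited from the aliasing commutator $[A,I_N-\calI]$ integrated over a step (Lemma~\ref{lem:defectAIN-INA}), so a step-by-step telescoping over the $M\sim T/\tau$ steps still gave $O(N^{-\alpha})$. For QSM-OS2 the exact commutation $[A,\calP_N]=0$ removes that gain, so telescoping $\Vert u^m_N-u^m\Vert$ step by step would cost a spurious factor $1/\tau$; the remedy, as in the periodic Fourier-spectral analysis of \cite{Jahnke2000error}, is to route the spatial error through the exact truncated flow $v_N$, where it enters only once, via the Galerkin estimate, rather than once per step. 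The technical price is that the local-error estimate for the truncated scheme must be made uniform in $N$, which is where the regularity hypothesis $u^j\in H^{\alpha}_{QP}(\bbR^d)$ is genuinely used (it bounds the higher Sobolev norms of $v_N$ that the local-error estimate would otherwise blow up).
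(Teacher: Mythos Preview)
Your central claim—that the one-step spatial defect $\check{\calS}_N u-\calP_N\calS u$ carries no factor of $\tau$, forcing the detour through the truncated exact flow $v_N$—is mistaken. You correctly computed
\[
\check{\calS}_N u-\calP_N\calS u
=e^{-\frac{i}{2}\tau A}\,\calP_N\, e^{-i\tau V}\, e^{-\frac{i}{2}\tau A}(\calP_N-\calI)u,
\]
but then missed that $w:=e^{-\frac{i}{2}\tau A}(\calP_N-\calI)u$ lies in the orthogonal complement of the range of $\calP_N$ (because $e^{-\frac{i}{2}\tau A}$ commutes with $\calP_N$), so $\calP_N w=0$ and hence
\[
\calP_N e^{-i\tau V}w=\calP_N\bigl(e^{-i\tau V}-1\bigr)w,
\qquad
\bigl\|\calP_N e^{-i\tau V}w\bigr\|_{L^2_{QP}}\le \tau\,\|V\|_{L^\infty_{QP}}\,\|w\|_{L^2_{QP}}.
\]
Combining Theorem~\ref{thm:embedingquasiperiodic} (to bound $\|V\|_{L^\infty_{QP}}$ by $\|V\|_{\calX_\alpha}\le C_V$) with Theorem~\ref{thm:QSM-analysis2} (to bound $\|(\calP_N-\calI)u^{j-1}\|$) gives exactly $\|\check{\calS}_N u^{j-1}-\calP_N\calS u^{j-1}\|_{L^2_{QP}}\le C\tau N^{-\alpha}|u^{j-1}|_\alpha$. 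The Lady Windermere telescoping then sums to $O(N^{-\alpha})$ with no spurious $\tau^{-1}$. In other words, the direct transcription of the proof of Theorem~\ref{thm:PMerror} \emph{does} work, and is actually cleaner than in the PM case: the $Z_1$ and $Z_3$ terms vanish because $[A,\calP_N]=0$, and the surviving $Z_2$-type term is the one just analysed. This is what the paper means both when it says (after Theorem~\ref{thm:PMerror}) that the PM-OS2 framework carries over to QSM-OS2, and when it invokes \cite{Jahnke2000error} in the appendix.

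Your alternative route through $v_N$ is not wrong in outline, but it is unnecessary and introduces a mismatch with hypothesis~(ii): the Galerkin consistency error is governed by $(\calP_N-\calI)u(\cdot,t)$ for the \emph{exact} solution $u(\cdot,t)$, whereas the stated assumption is regularity of the splitting iterates $u^j$. You would have to bridge that gap separately, for no benefit once the $\tau$ factor above is recognised.
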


%%=============================================%%
%% For submissions to Nature Portfolio Journals %%
%% please use the heading ``Extended Data''.   %%
%%=============================================%%

%%=============================================================%%
%% Sample for another appendix section			       %%
%%=============================================================%%

%% \section{Example of another appendix section}\label{secA2}%
%% Appendices may be used for helpful, supporting or essential material that would otherwise 
%% clutter, break up or be distracting to the text. Appendices can consist of sections, figures, 
%% tables and equations etc.

\end{appendices}

%%===========================================================================================%%
%% If you are submitting to one of the Nature Portfolio journals, using the eJP submission   %%
%% system, please include the references within the manuscript file itself. You may do this  %%
%% by copying the reference list from your .bbl file, paste it into the main manuscript .tex %%
%% file, and delete the associated \verb+\bibliography+ commands.                            %%
%%===========================================================================================%%

%\bibliography{sn-bibliography}% common bib file
%% if required, the content of .bbl file can be included here once bbl is generated
%%\input sn-article.bbl

\end{document}